\theoremstyle{plain}
	\newtheorem{theorem}{Theorem}[section]
	\newtheorem{lemma}[theorem]{Lemma}
	\newtheorem{proposition}[theorem]{Proposition}
\theoremstyle{definition}
	\newtheorem{definition}[theorem]{Definition}
	\newtheorem{example}[theorem]{Example}
	\newtheorem{remark}[theorem]{Remark}
\numberwithin{equation}{section}
	\pgfplotsset{compat=1.11}
\newcommand{\N}{\mathbb{N}}
\newcommand{\R}{\mathbb{R}}
\newcommand{\eps}{\varepsilon}
\newcommand{\de}{\partial}
\newcommand{\A}{\mathcal{A}}
\newcommand{\M}{\mathcal{M}}
\newcommand{\leb}{\mathcal{L}}
\newcommand{\haus}{\mathcal{H}}
\newcommand{\mres}{
	\mathbin{\vrule height 1.6ex depth 0pt width 0.13ex
	         \vrule height 0.13ex depth 0pt width 1.3ex}}
\renewcommand{\phi}{\varphi}
\renewcommand{\P}{\mathcal{P}}
\renewcommand{\S}{\mathcal{S}}
\renewcommand{\theta}{\vartheta}
\renewcommand{\rho}{\varrho}
\DeclareMathOperator{\Lip}{Lip}
\DeclareMathOperator{\sgn}{sgn}
\DeclareMathOperator{\co}{co}
\DeclareMathOperator{\ess}{ess}
\DeclareMathOperator{\dom}{dom}
\DeclarePairedDelimiter{\scalar}{<}{>}                                     
\DeclarePairedDelimiter{\set}{\{}{\}}
\DeclarePairedDelimiter{\abs}{|}{|}
\mathchardef\ordinarycolon\mathcode`\:
\begin{document}

\title{Symmetric double bubbles in the Grushin plane}

\thanks{\textit{Acknowledgments}. The authors would like to thank Prof.~Roberto Monti from the University of Padova for several suggestions on the problem.
The authors would also like to thank Prof.~G.~P.~Leonardi from the University of Modena and Reggio Emilia for the useful discussions on the subject.
The first author also acknowledges the support of the project ANR-15-CE40-0018, of 
 the G.N.A.M.P.A.\ project \textit{Problemi nonlocali e degeneri in $\R^n$} and of the INS2I project \textit{CINCIN}}

\author{Valentina Franceschi$^\flat$}
\address{$^\flat$FMJH \& IMO, B\^atiment 307, Facult\'e des Sciences d'Orsay, Universit\'e Paris Sud, Orsay}
\email{valentina.franceschi@math.u-psud.fr}

\author{Giorgio Stefani$^\sharp$}
\address{$^\sharp$Scuola Normale Superiore, Piazza Cavalieri 7, 56126 Pisa, Italy}
\email{giorgio.stefani@sns.it}

\date{\today}
\begin{abstract}
We address the {\em double bubble} problem for the anisotropic {\em Grushin perimeter} $P_\alpha$, $\alpha\geq 0$, and the Lebesgue measure in $\R^2$, in the case of two equal volumes. 
We assume that 
the \emph{contact interface} between the bubbles lies on either the vertical or the horizontal axis.  We first prove existence of minimizers via the direct method by symmetrization arguments and then characterize them in terms of the given area by first variation techniques. Even though no regularity theory is available in this setting, we prove that angles at which minimal boundaries intersect satisfy the standard \emph{$120$-degree rule} up to a suitable change of coordinates. While for $\alpha=0$ the Grushin perimeter reduces to the Euclidean one and both minimizers coincide with the symmetric double bubble found in~\cite{FABHZ93}, for $\alpha=1$ vertical interface minimizers have Grushin perimeter strictly greater than horizontal interface minimizers. As the latter ones are obtained by translating and dilating the Grushin isoperimetric set found in~\cite{MM04}, we conjecture that they solve the double bubble problem with no assumptions on the contact interface.
 \end{abstract}
%
%
\subjclass[2010]{53C17, 49Q20}

\keywords{Grushin perimeter, symmetric double bubble, constrained interface, 120-degree rule}
\maketitle
\section{Introduction}
\label{sec:intro}

\subsection{General framework}

For a volume measure $\mathsf{V}$ and a perimeter measure $\mathsf{P}$ on an $n$-dimensional manifold $\mathbb{M}$, an \emph{$m$-bubble cluster} is a family of $m\ge2$ pairwise disjoint sets $\set*{E_i\subset\mathbb{M} : i=1,\dots,m}$ such that $\mathsf{P}(E_i)<+\infty$ and $\mathsf{V}(E_i)<+\infty$ for all $i=1,\dots,m$.
Given $v_i>0$ for $i=1,\dots,m$, a \emph{minimal bubble cluster problem} on $(\mathbb{M},\mathsf{P},\mathsf{V})$ consists in finding solutions of the minimization problem
\begin{equation}\label{eq:intro_bubble_cluster_problem}\tag{\textbf{P}}
\inf\set*{\P_\mathsf{P}(E) : E=\bigcup_{i=1}^m E_i\subset\mathbb{M},\ \mathsf{V}(E_i)=v_i},
\end{equation}
where $\set*{E_i : i=1,\dots,m}$ is an $m$-bubble cluster and $\P_\mathsf{P}$ is given by 
\begin{equation*}
\P_\mathsf{P}(E)=\frac{1}{2}\,\mathsf{P}(\mathbb{M}\setminus E)+\frac{1}{2}\sum_{i=1}^m\mathsf{P}(E_i).
\end{equation*}


For $m=1$, \eqref{eq:intro_bubble_cluster_problem} is the \emph{isoperimetric problem}. When $\mathbb{M}$ is either the Euclidean space, the $n$-dimensional sphere $\mathbb S^n$ or the hyperbolic space, endowed with the Riemannian perimeter and volume, minimizers are known to be metric balls, see~\cite{R05}.

Regarding the case $m\geq 2$, Plateau experimentally established in~\cite{P1873} that soap films are made of constant mean curvature smooth surfaces meeting in threes along an edge, the so-called \emph{Plateau border}, at an angle of 120 degrees. These Plateau borders, in turn, meet in fours at a vertex at an angle of $\arccos(-\tfrac{1}{3})\simeq 109.47$ degrees (the \emph{tetrahedral angle}). Existence and regularity of minimizers of~\eqref{eq:intro_bubble_cluster_problem} in the Euclidean setting $(\R^n,P,\leb^n)$ were proved by Almgren in his celebrated work~\cite{A76}. Here, $P$ denotes the standard De Giorgi perimeter and $\mathcal L^n$ the $n$-dimensional Lebesgue measure. 
Plateau's observations were rigorously confirmed by Taylor in~\cite{T76} (a proof of the same result in higher dimensions $n\geq 4$ was announced in~\cite{W85}). The case $n=2$ was treated separately in~\cite{M94}. 

When $m=2$, \eqref{eq:intro_bubble_cluster_problem} is the \emph{double bubble problem}. This is the type of problem that we address in this paper for an anisotropic perimeter, called the Grushin perimeter, see Section \ref{ss:our}. In the Euclidean setting, the natural candidate solution is the so-called \emph{standard double bubble} given by three $(n-1)$-dimensional spherical cups intersecting in an $(n-2)$-dimensional sphere at an angle of 120 degrees (for equal volumes $v_1=v_2$, the central cup is indeed a flat disc). The first proof of this result for $n=2$ was given in~\cite{FABHZ93} exploiting the analysis carried out in~\cite{M94}. A second proof appeared in~\cite{DLSW09}. The case $n=3$ was established first in~\cite{HS00} for equal volumes and then in~\cites{HMRR02} with no restrictions. The case $n\geq 4$ was finally solved in~\cite{R08}.

The double bubble problem has been addressed also in other spaces. For $\mathbb M=\mathbb S^n$
the problem was completely solved for $n=2$ in~\cite{M96}, while for $n\geq 3$ only partial results are available, see~\cites{CF02,C-et-al08}. The double bubble problem was completely solved on the $2$-dimensional boundary of the cone in~$\R^3$, where minimizers are either two concentric circles or a circle lens, see~\cite{LB06}, and on the flat 2-torus, where five types of minimizers occur, see~\cite{C-et-al04}. 

For $m\geq 3$, problem~\eqref{eq:intro_bubble_cluster_problem} is still unsolved even in the Euclidean case and presents several interesting open questions, see~\cite{M12}*{Part~IV}. The case $\mathbb{M}=\R^2$, $m=3$ was solved in~\cite{W04}. For a detailed review on minimal partition problems, see \cites{M12,M16}.


\subsection{Our setting}\label{ss:our}
In this paper, we address problem~\eqref{eq:intro_bubble_cluster_problem} for  $m=2$, where $\mathbb M=\R^2$, $\mathsf{V}=\mathcal L^2$ is the $2$-dimensional Lebesgue measure and, for $\alpha\geq 0$, $\mathsf{P}=P_\alpha$ is the \emph{Grushin $\alpha$-perimeter} 
given by
\begin{equation}\label{eq:intro_def_Grushin_perim}
P_\alpha(E)=\sup\set*{\int_E\left(\de_x\phi_1+|x|^\alpha\de_y\phi_2\right)\,dxdy : \phi_1,\phi_2\in C^1_c(\R^2),\ \sup_{\R^2}\sqrt{\phi_1^2+\phi_2^2}\le1}
\end{equation}
for any $\mathcal L^2$-measurable set $E\subset\R^2$. For $\alpha=0$, the perimeter in~\eqref{eq:intro_def_Grushin_perim} reduces to the standard Euclidean one in~$\R^2$. The $\alpha$-perimeter is naturally associated with a Carnot--Carath\'eodory structure in~$\R^2$ called the \emph{Grushin plane}, {\em i.e.,} the manifold~$\R^2$ endowed with the vector fields $X=\de_x$ and $Y=|x|^\alpha\,\de_y$. 
An essential feature of~$P_\alpha$ 
is its invariance under (Euclidean) \emph{vertical translations} $(x,y)\mapsto(x,y+h)$ for $h\in\R$. Moreover, the Grushin plane $(\R^2,P_\alpha,\leb^2)$ is homogeneous with respect to the intrinsic anisotropic \emph{dilations} given by $(x,y)\mapsto\delta_\lambda(x,y)=(\lambda x,\lambda^{\alpha+1}y)$ for all $\lambda>0$ and $(x,y)\in\R^2$, that is, 
\begin{equation*}
\leb^2(\delta_\lambda(E))=\lambda^{\alpha+1}\leb^2(E), 
\qquad 
P_\alpha(\delta_\lambda(E))=\lambda^{\alpha+2} P_\alpha(E),
\end{equation*}
see~\cite{MM04}*{Proposition~2.2}.


The case $m=1$ in problem~\eqref{eq:intro_bubble_cluster_problem} was completely solved in this context in~\cite{MM04}, where the authors showed existence and uniqueness up to dilations and vertical translations of the \emph{Grushin isoperimetric set}. This is $E_\alpha=\set*{(x,y)\in\R^2 : |y|\le\phi_\alpha(|x|),\ |x|\le1}$, where the profile function $\phi_\alpha\colon[0,1]\to[0,+\infty)$ is given by
\begin{equation}\label{eq:isop_profile}
\phi_\alpha(x)=\int_{\arcsin x}^\frac{\pi}{2}\sin^{\alpha+1}(t)\ dt,\qquad\text{for }x\in [0,1].
\end{equation}
See also \cite{FM16} for a generalization to higher dimensional Grushin structures.

We remark that a regularity theory for almost minimizers of the $\alpha$-perimeter is not yet available. 
We refer to \cites{MV15, MS17, M15} for some partial results in the strictly related setting of Heisenberg group.
For this reason, the case $m=2$ in problem~\eqref{eq:intro_bubble_cluster_problem} cannot be addressed in full generality for the Grushin perimeter following the approach of~\cites{FABHZ93,M94}. As a matter of fact, a candidate solution of the double bubble problem in the sub-Riemannian setting has not been proposed yet. In this paper we study possible configurations and formulate a ``standard double bubble conjecture'' in this context. To this purpose, we study the case of two equal volumes and we assume that the \emph{contact interface} between the two bubbles is contained in one of the two coordinate axes.

\subsubsection{The problem for vertical interfaces}

For any $\leb^2$-measurable set $E\subset\R^2$,  let
\begin{equation}\label{eq:vertical_pm}
E^{+x}=E\cap\set*{(x,y)\in\R^2 : x>0}, 
\qquad 
E^{-x}=E\cap\set*{(x,y)\in\R^2 : x<0}.
\end{equation}
For a given $v>0$, we define the class of admissible sets as
\begin{equation}\label{eq:def_adimissible_class}
\A^x(v)=\set*{E\subset\R^2 : \text{$E$ is $\leb^2$-measurable},\ \leb^2(E^{+x})=\leb^2(E^{-x})=v}.
\end{equation} 
The first problem that we treat is
\begin{equation}\label{eq:minpart}
\inf\set*{\P_\alpha^x(E) : E\in\A^x(v)}
\end{equation}
where, for any $E\in\A^x(v)$, we let 
\begin{equation}\label{eq:Pa}
\P_\alpha^x(E)=\frac{1}{2}\big(P_\alpha(E^{+x})+P_\alpha(E^{-x})+P_\alpha(E)\big)\in[0,+\infty].
\end{equation}
When $\alpha=0$, we simply write $\P_0^x=\P^x$.

\subsubsection{The problem for horizontal interfaces}

For any $\leb^2$-measurable set $E\subset\R^2$, let
\begin{equation}\label{eq:horizontal_pm}
E^{+y}=E\cap\set*{(x,y)\in\R^2 : y>0}, 
\qquad 
E^{-y}=E\cap\set*{(x,y)\in\R^2 : y<0}.
\end{equation}
For a given $v>0$, we define the class of admissible sets as
\begin{equation}\label{eq:def_adimissible_class_y}
\A^y(v)=\set*{E\subset\R^2 : \text{$E$ is $\leb^2$-measurable},\ \leb^2(E^{+y})=\leb^2(E^{-y})=v}.
\end{equation} 
The second problem that we treat is
\begin{equation}\label{eq:minpart_y}
\inf\set*{\P_\alpha^y(E) : E\in\A^y(v)},
\end{equation}
where, for any $E\in\A^y(v)$, we let 
\begin{equation}\label{eq:Pa_y}
\P_\alpha^y(E)=\frac{1}{2}\big(P_\alpha(E^{+y})+P_\alpha(E^{-y})+P_\alpha(E)\big)\in[0,+\infty].
\end{equation}
Again, when $\alpha=0$, we simply write $\P_0^y=\P^y$.

\subsubsection{Main results}
Existence of minimizers to problems \eqref{eq:minpart} and \eqref{eq:minpart_y} is proved in Theorems \ref{th:existence} and \ref{th:existence_y} respectively.
Our approach to characterize them is based on a rearrangement technique which has its own interest (see Section \ref{sec:rearrangement} and Theorems \ref{th:reduction}, \ref{th:reduction_y}) which exploits the existence of a change of coordinates that transforms the Grushin plane $(\R^2,P_\alpha,\leb^2)$ in the \emph{transformed plane} $(\R^2,P,\M_\alpha)$. Here, $P$ is the standard Euclidean perimeter and 
$\M_\alpha$ is a weighted area, see Section \ref{ss:transf_plane}. Thanks to this rearrangement we deduce symmetry of minimizers, which yields a complete characterization of the minimal double bubbles with constrained vertical and horizontal interface by a first variation argument (see Theorems \ref{th:charact_reg_min}, \ref{th:charact_reg_min_y}). Our main results are resumed in the following theorem.

\begin{theorem}
Let $v>0$. Then solutions to problems \eqref{eq:minpart}, \eqref{eq:minpart_y} exist. Moreover, the following statements hold.

1) If $E\subset\R^2$ is a solution to \eqref{eq:minpart}, then, up to vertical translations, we have
\begin{equation*}
E=\set*{(x,y)\in\R^2 : |y|\le f(|x|),\ |x|\le r},
\end{equation*}
where $f\in C([0,r])\cap C^\infty(]0,r[)$, $r\in]0,+\infty[$,  is defined  by 
\begin{equation*}
f(x)=-\frac{1}{|k|^{\alpha+1}}\int_{-1}^{kx+\frac{1}{2}}\frac{t(\frac{1}{2}-t)^\alpha}{\sqrt{1-t^2}}\ dt,
\end{equation*} 
for all $x\in[0,r]$, where $r=-\frac{3}{2k}$ and 
\begin{equation*}
k=-\left(-\frac{2}{v}\int_{-1}^\frac{1}{2}\frac{t(\frac{1}{2}-t)^{\alpha+1}}{\sqrt{1-t^2}}dt\right)^\frac{1}{\alpha+2}.
\end{equation*}

2)
If $E\subset\R^2$ is a solution to \eqref{eq:minpart_y}, then, up to vertical translations, we have
\begin{equation*}
E=\delta_{\frac{1}{h}}\left(\set*{(x,y)\in\R^2 : \left(x,|y|-\phi_\alpha\left(\tfrac{\sqrt{3}}{2}\right)\right)\in E_\alpha}\right),
\end{equation*}
where $\phi_\alpha\colon[0,1]\to[0,+\infty[$ is the isoperimetric profile defined in~\eqref{eq:isop_profile}  and 
\begin{equation*}
h=\left[\frac{1}{v}\left(\leb^2(E_\alpha)-2\int_0^\frac{\sqrt{3}}{2}\frac{t^{\alpha+2}}{\sqrt{1-t^2}}\ dt\right)\right]^\frac{1}{\alpha+2}.
\end{equation*}
\end{theorem}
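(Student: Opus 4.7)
The plan follows the three-stage structure outlined in the introduction: existence by the direct method, reduction to a symmetric class by rearrangement, and explicit characterization by first variation. For existence (Theorems~\ref{th:existence} and~\ref{th:existence_y}), given a minimizing sequence $(E_n)$, I would exploit the vertical translation invariance of $P_\alpha$ to fix a common vertical scale, and then apply a Steiner-type symmetrization in the vertical direction---which, by vertical invariance, does not increase the functional---to gain $L^1_{\mathrm{loc}}$-compactness of $(E_n)$. Passing to the limit, the lower semicontinuity of $P_\alpha$, immediate from the supremum definition~\eqref{eq:intro_def_Grushin_perim}, then produces a minimizer.

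For part~1, the rearrangement of Theorem~\ref{th:reduction} reduces to minimizers $E$ symmetric under both $(x,y)\mapsto(-x,y)$ and $(x,y)\mapsto(x,-y)$. Passing to the transformed plane $(\R^2,P,\M_\alpha)$ of Section~\ref{ss:transf_plane}, where $P_\alpha$ becomes a classical weighted Euclidean perimeter, I would write $E^{+x}=\{(x,y) : 0<x<r,\ |y|\le f(x)\}$ and compute the first variation of $\P_\alpha^x$ in these coordinates. This yields a constant-generalized-mean-curvature ODE for $f$ that can be integrated explicitly. The two boundary conditions---namely $f(r)=0$ (closure of $E$) and the $120$-degree rule at the triple point on the $y$-axis, which takes its standard Euclidean form after the change of coordinates---together with the volume constraint $\leb^2(E^{+x})=v$, determine $k$ and $r$ and yield the stated formula.

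For part~2, Theorem~\ref{th:reduction_y} gives the analogous symmetries, with $E^{-y}$ the reflection of $E^{+y}$ across the $x$-axis. Once this reflection is available, the restriction $E\cap\{y>0\}$ solves an isoperimetric-type problem for $P_\alpha$ with free boundary on $\{y=0\}$. By the uniqueness of the Grushin isoperimetric set established in~\cite{MM04}, the outer boundary of $E^{+y}$ must coincide, up to dilation and vertical translation, with a portion of $\de E_\alpha$, so that $E$ has exactly the form described for some parameters $h$ and cut height $\eta$. The $120$-degree rule, transported to the transformed plane, forces $\eta=\phi_\alpha(\sqrt{3}/2)$---at $x=\sqrt{3}/2$ the tangent of $\phi_\alpha$ has precisely the slope prescribed by the angle condition---and the volume equation $\leb^2(E^{+y})=v$ then gives the formula for $h$.

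The main obstacle throughout is the absence of a regularity theory for almost minimizers of $P_\alpha$: the standard arguments linking the $120$-degree rule to the first variation are not available in the original Grushin plane. Every regularity and angle statement must therefore be recovered indirectly, either through the change of coordinates to the transformed plane or through the rearrangement-based reduction, which here plays the role normally filled by Taylor-type regularity.
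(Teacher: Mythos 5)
Your overall architecture matches the paper's (existence by the direct method after symmetrization, then characterization by first variation in the transformed plane), but as written the plan has two genuine gaps. First, for the vertical-interface problem \eqref{eq:minpart} a Steiner symmetrization in the vertical direction is not enough: it controls neither the horizontal spread of a minimizing sequence nor the structure of the sets near the interface, and an ordinary Steiner symmetrization in the $x$-direction is unavailable because it would destroy the constraint $\leb^2(E^{+x})=\leb^2(E^{-x})=v$. The paper's substitute is the \emph{horizontal rearrangement} $E^\bigstar$ of \eqref{eq:def_rearrangement} and Theorem~\ref{th:rearrang}, which pushes each half-bubble toward the interface slice by slice, decreases both $P(E)$ and $P(E;H^+)$ (this requires the elementary inequalities of Lemma~\ref{lemma:elem_ineq} and a polyhedral approximation), and---combined with convexification in the transformed plane---yields the uniform confinement in a rectangle that makes the direct method work. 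This rearrangement is the technical core of the existence proof and is absent from your plan.

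Second, you invoke the $120$-degree rule at the triple points as a boundary condition, but, as you yourself observe, no regularity theory is available to justify it, and you do not say how it is recovered. In the paper it is \emph{derived}, not assumed: one first proves that the interface is nondegenerate ($f(0)>0$, resp.\ $g(0)>0$) by explicit competitor constructions (truncation or horizontal shift of the profile followed by an anisotropic dilation restoring the volume, cf.\ Step~2 of the proof of Theorem~\ref{th:charact_reg_min}), and only then performs a first variation with test functions not vanishing at the interface; the boundary term $2\psi(0)$ coming from the interface contribution to $\P_\alpha^x$ forces $d=\tfrac12$ in \eqref{eq:def_d}, which is precisely the $120$-degree rule in transformed coordinates. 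Similarly, in part~2 your appeal to the uniqueness of the Grushin isoperimetric set does not directly identify $\de E^{+y}$ with a portion of $\de E_\alpha$, since $E^{+y}$ solves a free-boundary problem rather than the isoperimetric problem; the paper instead integrates the Euler--Lagrange ODE for the $y$-profile, shows the profile has a unique maximum and is symmetric about it by Cauchy--Lipschitz uniqueness, and only then concludes that $E$ is a cut, dilated, translated copy of $E_\alpha$, with the interface condition $g(0)^\alpha g'(0)=\tfrac{1}{\sqrt{3}}$ selecting the cut height $\tau=\tfrac{\sqrt{3}}{2}$.
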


Both double bubbles with constrained interface in $(\R^2,P_\alpha,\leb^2)$ consist of three smooth curves with constant mean $\alpha$-curvature (see Figures \ref{fig:dbeucl}, \ref{fig:grushin_bubble}, \ref{fig:dbeucl_y}, \ref{fig:isop_bubble}). Even though they do not satisfy the $120$-degree Plateau's rule in the Grushin plane, they do satisfy it in the transformed plane $(\R^2,P,\M_\alpha)$, see Sections \ref{sss:angle}, \ref{sss:angle_y}. This new phenomenon gives some insights about the possible structure of the singular set of minimizers in this sub-Riemannian context.

In conclusion, by comparing the two minimal double bubbles in the case $\alpha=1$, see Remark \ref{rem:comparison}, we obtain a candidate solution to the general double bubble problem for the Grushin perimeter: the configuration with vertical interface has perimeter strictly greater than the one with horizontal interface. This establishes a connection with the standard double bubble in the Euclidean setting: in fact, the minimal double bubble with horizontal interface is obtained by translating and dilating the Grushin isoperimetric set found in~\cite{MM04}, similarly to the Euclidean case. This leads us to conjecture that this configuration may solve the double bubble problem with no assumptions on the contact interface.

%

\section{Preliminaries on the Grushin perimeter}
\label{sec:elem_prop_grushin}
\subsection{Representation formulas}
We start recalling some representation formulas for the $\alpha$-perimeter that we will use in the sequel. 
Let $E\subset\R^2$ be a bounded open set with Lipschitz boundary. Then
\begin{equation}\label{eq:repres_formula}
P_\alpha(E)=\int_{\de E}\left(N_{E,1}(x,y)^2+|x|^{2\alpha}N_{E,2}(x,y)^2\right)^{1/2}\ d\haus^1(x,y),
\end{equation}
where $N_E(x,y)=(N_{E,1}(x,y),N_{E,2}(x,y))$ is the (outward) unit normal to $\de E$ at the point $(x,y)\in\de E$. Formula~\eqref{eq:repres_formula} is proved in~\cite{MM04}*{Theorem~2.1}. 
Let $D=(a,b)\subset\R$ be a bounded open interval and let $\phi,\psi\colon D\to\R$ be two bounded Lipschitz functions on $D$. Consider the open epigraphs 
\begin{equation*}
E_\phi=\set*{(x,y)\in\R^2 : x\in D,\ y>\phi(x)},
\qquad
E_\psi=\set*{(x,y)\in\R^2 : y\in D,\ x>\psi(y)}.
\end{equation*}
The sets $E_\phi$ and $E_\psi$ have finite $\alpha$-perimeter in the cylinders $D\times\R$ and $\R\times D$ respectively. Moreover, formula~\eqref{eq:repres_formula} implies that
\begin{equation}\label{eq:perim_from_x_to_y}
P_\alpha(E_\phi;D\times\R)=\int_D\sqrt{|x|^{2\alpha}+\phi'(x)^2}\ dx,
\end{equation}
and 
\begin{equation}\label{eq:perim_from_y_to_x}
P_\alpha(E_\psi;\R\times D)=\int_D\sqrt{1+|\psi(y)|^{2\alpha}\,\psi'(y)^2}\ dy.
\end{equation}

\subsection{Transformed plane}
\label{ss:transf_plane}
As observed in~\cite{MM04}, there exists a change of coordinates that allows us to identify the Grushin plane $(\R^2,P_\alpha,\leb^2)$ with the \emph{transformed (Grushin) plane} $(\R^2,P,\M_\alpha)$, where $P$ is the Euclidean perimeter and $\M_\alpha$ is a weighted $2$-dimensional Lebesgue measure.
Precisely, consider the functions $\Phi,\Psi\colon\R^2\to\R^2$ defined as
\begin{equation}\label{eq:def_Phi_Psi}
\Phi(\xi,\eta)=\left(\sgn(\xi)|(\alpha+1)\xi|^\frac{1}{\alpha+1},\ \eta\right), 
\qquad 
\Psi(x,y)=\left(\sgn(x)\frac{|x|^{\alpha+1}}{\alpha+1},\ y\right).
\end{equation}
Clearly, the functions $\Phi$ and $\Psi$ are homeomorphisms with $\Phi^{-1}=\Psi$ and, for any $\xi\ne0$, $|\det J\Phi(\xi,\eta)|=|(\alpha+1)\xi|^{-\frac{\alpha}{\alpha+1}}$. By~\cite{MM04}*{Proposition~2.3}, for any $\leb^2$-measurable set $E\subset\R^2$, the transformed set $F=\Psi(E)$ satisfies
\begin{equation}\label{eq:prop_Phi_Psi}
P(F)=P_\alpha(E), 
\qquad 
\M_\alpha(F)=\int_F |(\alpha+1)\xi|^{-\frac{\alpha}{\alpha+1}}\ d\xi d\eta=\leb^2(E).
\end{equation}

\subsection{Grushin isoperimetric set}
\label{subsec:grushin_isop_set}

By~\cite{MM04}*{Theorem~3.2}, the following isoperimetric inequality holds. For any measurable set $E\subset\R^2$ with $\leb^2(E)<+\infty$, we have  
\begin{equation}\label{eq:isop_ineq}
\leb^2(E)\le c(\alpha)P_\alpha(E)^\frac{\alpha+2}{\alpha+1},
\end{equation}
where $c(\alpha)>0$ is a constant depending only on $\alpha\ge0$. The equality in~\eqref{eq:isop_ineq} is achieved on the \emph{Grushin isoperimetric set}
\begin{equation}\label{eq:isop_set}
E_\alpha=\set*{(x,y)\in\R^2 : |y|\le\phi_\alpha(|x|),\ |x|\le1},
\end{equation}
where the \emph{isoperimetric profile} $\phi_\alpha\colon[0,1]\to[0,r_\alpha]$ is given by \eqref{eq:isop_profile}
and we let $r_\alpha=\phi_\alpha(0)$. The isoperimetric set is unique up to dilations and vertical translations. Observe that the isoperimetric profile satisfies
\begin{equation*}
\phi_\alpha'(x)=-\frac{x^{\alpha+1}}{\sqrt{1-x^2}}
\end{equation*}
for all $x\in]0,1[$ and that
\begin{equation*}
P_\alpha(E_\alpha)=2\int_0^\pi\sin^\alpha(t)\ dt,
\qquad
\leb^2(E_\alpha)=\frac{\alpha+1}{\alpha+2}\,P_\alpha(E_\alpha).
\end{equation*}

We remark that the boundary of the isoperimetric set $E_\alpha$ is not smooth. Precisely, if $\alpha\in\N$, then  $\partial E_\alpha$ is $C^{\alpha+1}$ but not $C^{\alpha+2}$ around the $y$-axis.

\subsection{Additional terminology}
\label{ss:symmetry}
We conclude this section introducing some additional terminology.
We say that a set $E\subset\R^2$ is \emph{$x$-symmetric} (respectively, \emph{$y$-symmetric}) if $(x,y)\in E$ implies $(-x,y)\in E$ (respectively, if $(x,y)\in E$ implies $(x,-y)\in E$). For every $t\in\R$, we define the sections
\begin{equation}\label{eq:sections_notation}
E^x_t=\set*{y\in\R : (t,y)\in E}, \qquad E^y_t=\set*{x\in\R : (x,t)\in E}.
\end{equation}
The set $E$ is \emph{$x$-convex} (respectively, \emph{$y$-convex}) if the section $E^y_t$ (respectively, the section $E^x_t$) is an open interval for every $t\in\R$. Moreover, the set $E$ is \emph{$y$-Schwarz symmetric} (respectively, \emph{$x$-Schwarz symmetric}) if it is both $y$-symmetric and $y$-convex (respectively, $x$-symmetric and $x$-convex). We denote by $\S_x$ the class of $\leb^2$-measurable, $x$-symmetric sets in $\R^2$ and by $\S^*_y$ the class of $\leb^2$-measurable and $y$-Schwarz symmetric sets in $\R^2$. The classes $\S_y$ and $\S^*_x$ are analogously defined. Finally, a set $E\in\S_x$ is \emph{$x$-transformed-convex} if $\Psi(E^{+x})$ is convex. A \emph{$y$-transformed-convex} set is defined similarly.

\section{A rearrangement in the half-plane}
\label{sec:rearrangement}

\subsection{Preliminaries}

In this section we introduce a rearrangement that decreases the $\alpha$-perimeter of suitable symmetric sets in the admissible class~\eqref{eq:def_adimissible_class}. Thanks to the change of variables~\eqref{eq:def_Phi_Psi}, it is enough to work in the Euclidean plane, so that along all this section we can assume $\alpha=0$. In the following, we let $H=[0,+\infty[\times\R$ be the \emph{closed} right half-plane and $H^+=]0,+\infty[\times\R$ the \emph{open} right half-plane.

\begin{definition}\label{def:essinf_esssup}
Let $A\subset\R$ be a measurable set. The \emph{essential inf of $A$} is defined as
\begin{equation*}
\ess\inf A=\sup\set*{t\in\R : \leb^1(A\cap]-\infty,t[)=0}.
\end{equation*}
The \emph{essential sup of $A$} is defined in the analogous way. Note that $\leb^1(A)=0$ if and only if either $\ess\inf A=+\infty$ or $\ess\sup A=-\infty$. 
\end{definition}

\begin{lemma}\label{lemma:def_essinf_slice_profile}
Let $E\subset H$ be a measurable set such that $E\in\S_y^*$. Up to modify $E$ on a negligible set, the functions $\lambda_E,\phi_E\colon\R\to[0,+\infty]$ given by
\begin{equation*}
\lambda_E(t)=\leb^1(E^y_t), \qquad \phi_E(t)=\ess\inf E^y_t, \qquad t\in\R,
\end{equation*}
are even on $\R$, monotone and left-continuous on $]0,+\infty[$. In particular, $\lambda_E$ is non-increasing and $\phi_E$ is non-decreasing on $]0,+\infty[$.
\end{lemma}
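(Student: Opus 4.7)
First I would exploit the hypothesis to write $E$ explicitly. Since $E$ is $y$-convex, $y$-symmetric and contained in $H$, for each $x\ge 0$ the vertical section $\set*{y:(x,y)\in E}$ is an open interval centered at the origin, hence equal to $]-a(x),a(x)[$ for a unique $a(x)\in[0,+\infty]$. By Fubini the half-width $a\colon[0,+\infty[\to[0,+\infty]$ is $\leb^1$-measurable and, up to a $\leb^2$-null set,
\begin{equation*}
E=\set*{(x,y)\in\R^2:x\ge 0,\ |y|<a(x)},\qquad E^y_t=\set*{x\ge 0:a(x)>|t|}.
\end{equation*}
The dependence on $|t|$ makes $\lambda_E$ and $\phi_E$ even on $\R$. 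Monotonicity on $]0,+\infty[$ is immediate from $\set*{a>t}\subset\set*{a>s}$ for $0<s<t$: the Lebesgue measure is monotone and the essential infimum is antitone in the set, yielding $\lambda_E$ non-increasing and $\phi_E$ non-decreasing.

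The substantive point is left-continuity. Being monotone on $]0,+\infty[$, both $\lambda_E$ and $\phi_E$ have at most countably many discontinuities; let $J\subset\,]0,+\infty[$ collect them. For $t>0$ and $s\nearrow t$, continuity from above of the Lebesgue measure (applied, if needed, after truncating to a bounded window to ensure finiteness) together with a parallel argument for the essential infimum gives
\begin{equation*}
\lim_{s\to t^-}\lambda_E(s)=\leb^1\set*{x:a(x)\ge t},\qquad \lim_{s\to t^-}\phi_E(s)=\ess\inf\set*{x:a(x)\ge t}.
\end{equation*}
These left-limits disagree with $\lambda_E(t)$ and $\phi_E(t)$ only through the level set $\set*{a=t}$, and a quick check shows that any jump of $\phi_E$ at $t$ forces $\leb^1\set*{a=t}>0$ and hence is already a jump of $\lambda_E$.

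The main obstacle is to align the actual values with these left-limits, which I would achieve by an explicit null-set modification. Setting
\begin{equation*}
\widetilde E=E\cup\bigcup_{t\in J}\set*{(x,\pm t):x\ge 0,\ a(x)=t},
\end{equation*}
each adjoined piece lies on a horizontal line and is $\leb^2$-null; a countable union of null sets stays null, so $\widetilde E$ differs from $E$ only on a negligible set. By construction $\widetilde E^y_t=\set*{x\ge 0:a(x)\ge|t|}$ for $t\in J\cup(-J)$ and $\widetilde E^y_t=E^y_t$ elsewhere, so $\lambda_{\widetilde E}$ and $\phi_{\widetilde E}$ match the left-limits at every jump and agree with $\lambda_E,\phi_E$ elsewhere. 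Evenness and monotonicity persist under this enlargement, and left-continuity on $]0,+\infty[$ follows, closing the proof.
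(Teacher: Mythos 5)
Your structural reduction of $E$ to $\set*{(x,y) : x\ge0,\ |y|<a(x)}$, and the arguments for evenness and monotonicity, are correct and agree with the paper's (much terser) proof. The gap is in the left-continuity step for $\phi_E$: the essential infimum is \emph{not} continuous from above along decreasing families of sets, so your identity $\lim_{s\to t^-}\phi_E(s)=\ess\inf\set*{x : a(x)\ge t}$ is false in general, and with it both the side claim that a jump of $\phi_E$ forces $\leb^1\set*{a=t}>0$ and the conclusion that your $\widetilde E$ works. Concretely, take $a(x)=1-x$ for $x\in[0,\tfrac12]$, $a(x)=2$ for $x\in[1,2]$, $a(x)=0$ otherwise; then $E\in\S_y^*$ has finite measure and perimeter, and for $\tfrac12\le s<1$ one finds $E^y_s=[0,1-s[\,\cup\,[1,2]$, hence $\phi_E(s)=0$, while $E^y_1=[1,2]$ gives $\phi_E(1)=1$, so $\phi_E$ jumps at $t=1$. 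Here $\set*{a=1}=\set*{0}$ is $\leb^1$-null, $\lambda_E(s)=2-s$ is continuous at $1$, and $\ess\inf\set*{a\ge1}=\ess\inf(\set*{0}\cup[1,2])=1\ne0=\lim_{s\to1^-}\phi_E(s)$: your modified set still has $\phi_{\widetilde E}(1)=1$ and remains left-discontinuous at $t=1$.

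The repair is to choose the new slice at each discontinuity point $t$ so that it realizes \emph{both} left limits simultaneously; $\set*{a\ge t}$ realizes the one for $\leb^1$ but not the one for $\ess\inf$. For instance the interval $\big]\ell,\ell+m\big[$ with $\ell=\lim_{s\to t^-}\phi_E(s)$ and $m=\lim_{s\to t^-}\lambda_E(s)$ (or any measurable set with that measure and that essential infimum) will do, and altering countably many horizontal slices is still a $\leb^2$-negligible modification of $E$. This is precisely what the paper's proof leaves implicit when it passes from ``a monotone function has countably many discontinuities'' to ``the conclusion follows''. Be aware also that in degenerate situations (e.g.\ $m=0$ with $\ell<+\infty$, which occurs at the top of a bounded set) no choice of slice reconciles the two requirements, so the left-continuity of $\phi_E$ must really be read on $\dom\phi_E$, which is how the lemma is used in the construction of the rearrangement.
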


\begin{proof}
Since $E$ is measurable, the horizontal section $E^y_t$ is measurable for a.e.\ $t\in\R$, so that the functions $\lambda_E,\phi_E\colon\R\to[0,+\infty]$ are well-defined a.e. Moreover, since $E\in\S_y^*$, we have $E^y_t=E^y_{-t}$ for every $t\in\R$ and $E^y_t\subset E^y_s$ for every $0<s<t$. Thus $\lambda_E$ (respectively,~$\phi_E$) is equivalent to a function even on~$\R$ and non-increasing (respectively, non-decreasing) on $]0,+\infty[$. Since a monotone function can only have countably many discontinuities in its domain, the conclusion follows. 
\end{proof}

\subsection{Rearrangement and first properties}
Let $E\subset H$ be a measurable set such that $E\in\S_y^*$. Let $\lambda_E,\phi_E\colon\R\to[0,+\infty]$ be the functions given by Lemma~\ref{lemma:def_essinf_slice_profile}. Let $\dom\phi_E$ be the set of points where~$\phi_E$ is finite. By Definition~\ref{def:essinf_esssup}, it is enough to work on $\dom\phi_E$. Let $D=\set*{d_k : k\in\N}\subset]0,+\infty[$ be the set of discontinuity points of the function $\phi_E$ in $\dom\phi_E$. For all $k\in\N$, we set 
\begin{equation}\label{eq:def_jump}
j_k=\big((\phi_E(d_k+)-\phi_E(d_k))-(\lambda_E(d_k)-\lambda_E(d_k+))\big)^+\ge0.
\end{equation}
We define the function $\tau_E\colon\R\to[0,+\infty]$ as
\begin{equation}\label{eq:def_translating_function}
\tau_E(t)=\sum_{k\in\N}j_k\chi_{(d_k,+\infty)}(|t|), \qquad t\in\R.
\end{equation}
Note that $\tau$ is even on $\R$, non-decreasing and left-continuous on $]0,+\infty[$, and such that $\tau_E(t)\le\phi_E(t)$ for $t\in\dom\phi_E$. We thus define the horizontal rearrangement of $E$ as 
\begin{equation}\label{eq:def_rearrangement}
E^\bigstar=\set*{(x,y)\in\R^2 : y\in\dom\phi_E,\ 0<x-\phi_E(y)+\tau_E(y)<\lambda_E(y)}.
\end{equation}
It is easy to see that $E^\bigstar\subset H$ is measurable and such that $E^\bigstar\in\S_y$. The following result shows that the rearrangement defined in~\eqref{eq:def_rearrangement} does not modify $x$-convex sets. 

\begin{lemma}\label{lemma:only_if_rearrang}
Let $E\subset H$ be a measurable set such that $E\in\S_y^*$. Let $E^\bigstar\subset H$ be as in~\eqref{eq:def_rearrangement}. If $E^y_t$ is equivalent to an interval for a.e.\ $t\in\R$, then $E^\bigstar=E$ up to negligible sets. 
\end{lemma}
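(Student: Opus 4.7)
The plan is to reduce the identity $E^\bigstar = E$ (up to $\leb^2$-null sets) to the monotonicity on $]0,+\infty[$ of the upper-envelope function $U(t) := \phi_E(t) + \lambda_E(t)$, and then to establish this monotonicity through an argument that mixes $y$-symmetry, $y$-convexity and Fubini. By hypothesis and Lemma~\ref{lemma:def_essinf_slice_profile}, the section $E^y_t$ is $\leb^1$-equivalent to $\,]\phi_E(t), U(t)[\,$ for a.e.\ $t$, so up to $\leb^2$-null sets
\begin{equation*}
E = \set*{(x,y)\in\R^2 : y\in\dom\phi_E,\ \phi_E(y) < x < U(y)},
\end{equation*}
while by~\eqref{eq:def_rearrangement}
\begin{equation*}
E^\bigstar = \set*{(x,y)\in\R^2 : y\in\dom\phi_E,\ \phi_E(y) - \tau_E(y) < x < U(y) - \tau_E(y)}.
\end{equation*}
Hence $E^\bigstar = E$ up to negligible sets if and only if $\tau_E \equiv 0$ a.e., which in view of~\eqref{eq:def_translating_function} is equivalent to $j_k = 0$ for every $k\in\N$, i.e.\ to $U(d_k+) \le U(d_k)$ at every discontinuity $d_k$ of $\phi_E$. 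Since $\phi_E$ and $\lambda_E$ are left-continuous on $]0,+\infty[$ by Lemma~\ref{lemma:def_essinf_slice_profile}, so is $U$, and thus it suffices to show that $U$ is non-increasing on $]0,+\infty[\,\cap\,\dom\phi_E$.

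To prove this monotonicity I would argue by contradiction, assuming $U(s) < U(t)$ for some $0 < s < t$ in $\dom\phi_E$ at which both $E^y_s$ and $E^y_t$ satisfy the interval property (a set of pairs of full measure). Then
\begin{equation*}
A := E^y_t \cap \,]U(s), U(t)[
\end{equation*}
has positive $\leb^1$-measure, since $E^y_t$ is $\leb^1$-equivalent to $\,]\phi_E(t), U(t)[\,$ and $\phi_E(t) < U(t)$. Pick any $x_0 \in A$: by definition $(x_0, t) \in E$, and $y$-symmetry yields $(x_0, -t) \in E$. Since $E$ is $y$-convex, $E^x_{x_0}$ is an open interval containing both $t$ and $-t$, hence also $s$, so $(x_0, s) \in E$ and $x_0 \in E^y_s$. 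But $x_0 > U(s)$ forces $x_0$ into the $\leb^1$-null set $E^y_s \setminus \,]\phi_E(s), U(s)[$, which would make $A$ itself null, a contradiction.

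The main obstacle is exactly this last step, where one must promote an almost-everywhere interval hypothesis on horizontal sections into a pointwise statement about a specific vertical section. The argument succeeds thanks to the pointwise nature of $y$-convexity encoded in the class $\S_y^*$: every vertical section $E^x_{x_0}$ is genuinely an open interval, which allows a positive-measure family of points at height $t$ to propagate, via $y$-symmetry and convexity, down to height $s$, where it collides with the essential interval description of $E^y_s$.
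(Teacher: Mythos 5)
Your argument is correct and follows essentially the same route as the paper: both proofs reduce the claim to $\tau_E\equiv 0$ and obtain it from the nesting $E^y_t\subset E^y_s$ for $0<s<t$ (your propagation of a point from height $t$ to height $s$ via $y$-symmetry and $y$-convexity is precisely the proof of this inclusion), which forces $\phi_E+\lambda_E$ to be non-increasing and hence every jump $j_k$ in~\eqref{eq:def_jump} to vanish. The only difference is presentational: the paper first replaces $E$ by the normal form~\eqref{eq:fusco} and reads off the interval inclusion directly, whereas you keep the original representative and dispose of the exceptional null sets by a contradiction on the positive-measure set $A$.
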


\begin{proof}
Up to a modification of $E$ on a negligible set, we can directly assume that  
\begin{equation}\label{eq:fusco}
E=\set*{(x,y)\in\R^2 : y\in\dom\phi_E,\ 0<x-\phi_E(y)<\lambda_E(y)},
\end{equation}
see~\cite{M12}*{Lemma~14.6}. Therefore, we just need to prove that $\tau_E(y)=0$ for every $y\in\R$. To do so, let $0<s<t$ and note that $E^y_t\subset E^y_s$ because $E\in\S_y^*$. By~\eqref{eq:fusco}, this means that
\begin{equation*}
\big]\phi_E(t),\phi_E(t)+\lambda_E(t)\big[
\subset
\big]\phi_E(s),\phi_E(s)+\lambda_E(s)\big[
\end{equation*}
for $0<s<t$, so that $(\phi_E(t)-\phi_E(s))-(\lambda_E(s)-\lambda_E(t))\le0$. Recalling~\eqref{eq:def_jump} and the definition of $\tau_E$ in~\eqref{eq:def_translating_function}, this concludes the proof.
\end{proof}

\subsection{Approximation lemma and elementary inequalities}

In the proof of the main result of this section, Theorem~\ref{th:rearrang} below, we will need the following approximation result. See also~\cite{F17}*{Lemma~2.1}. 

\begin{lemma}\label{lemma:polyhedral_approx_y-Steiner}
Let $E\subset H$ be a measurable set such that $\leb^2(E)<+\infty$, $P(E)<+\infty$ and $E\in\S_y^*$. There exists a sequence $(E_k)_{k\in\N}\subset H$ of bounded open sets with polyhedral boundary such that $E_k\in\S_y^*$ and, as $k\to+\infty$,
\begin{equation*}
\chi_{E_k}\to\chi_E\ \text{in}\ L^1, 
\qquad 
P(E_k)\to P(E), 
\qquad 
P(E_k;H^+)\to P(E;H^+).
\end{equation*}
\end{lemma}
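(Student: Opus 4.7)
The plan is to exploit the $y$-Schwarz symmetry to reduce the approximation to one of a single non-negative scalar function on $[0,+\infty[$. Since $E\in\S_y^*$ and $E\subset H$, up to a $\leb^2$-negligible modification each vertical section $E^x_t$ with $t\ge 0$ is an open symmetric interval $(-h(t),h(t))$, so $E=\{(x,y)\in H : |y|<h(x)\}$ for some measurable $h\colon[0,+\infty[\to[0,+\infty]$. The hypothesis $\leb^2(E)<+\infty$ yields $h\in L^1([0,+\infty[)$, while $P(E)<+\infty$ together with standard slicing for BV graphs applied to $y=\pm h(x)$ implies that $h$ is equivalent to a $BV$ function on $[0,+\infty[$. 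Decomposing the reduced boundary of $E$ into the two symmetric graphs together with the left edge on $\{0\}\times\R$ gives $P(E;H^+)=L(h)$ and $P(E)=L(h)+2h(0^+)$, where $L(h)$ denotes twice the BV length of the graph of $h$ (absolutely continuous length plus the sum of jumps).

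First, I would truncate $h$ outside a large interval $[0,R_k]$ with $R_k\to+\infty$, chosen so that the trace $h(R_k^-)\to 0$ (possible thanks to $h\in L^1\cap BV$). On $[0,R_k]$ I would replace $h$ by a non-negative piecewise affine interpolant $h_k$ built on a refining finite partition that separates the main atoms of the singular part $D^s h$, and I would set $h_k(0):=h(0^+)$. Standard BV approximation theory on a bounded interval then guarantees $h_k\to h$ in $L^1$ and $|Dh_k|([0,+\infty[)\to|Dh|([0,+\infty[)$, whence $L(h_k)\to L(h)$.

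I then define $E_k=\{(x,y)\in H : 0<x<R_k,\ |y|<h_k(x)\}$. By construction $E_k$ is a bounded open polyhedral set contained in $H$, $y$-symmetric, and $y$-convex, hence $E_k\in\S_y^*$. The convergence $\chi_{E_k}\to\chi_E$ in $L^1$ follows from $h_k\to h$ in $L^1$ by Fubini. For the perimeter, a direct computation on the polyhedral boundary yields $P(E_k)=L(h_k)+2h_k(0)$ and $P(E_k;H^+)=L(h_k)$. Combining with $L(h_k)\to L(h)$ and $h_k(0)\to h(0^+)$ produces both $P(E_k)\to P(E)$ and $P(E_k;H^+)\to P(E;H^+)$.

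The main obstacle is the \emph{simultaneous} convergence $P(E_k)\to P(E)$ and $P(E_k;H^+)\to P(E;H^+)$: a generic mollification would smooth out the jumps of $h$ and possibly distort its trace at $0$, thereby destroying control of the $2h(0^+)$ contribution concentrated on the $y$-axis, which is precisely the term that distinguishes the two perimeter quantities. This forces a piecewise-affine approximation that respects both the jump structure of $h$ and its value at the left endpoint $x=0$; once this is in place, everything else is routine Fubini plus elementary polygonal length computations.
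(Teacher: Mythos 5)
Your reduction to a one-dimensional profile is a genuinely different route from the paper's proof, which instead reflects $E$ across the $y$-axis to obtain a set with no perimeter on $\de H$, approximates the reflected set by smooth sets via the standard density theorem (checking, as in \cite{F17}, that mollification preserves the symmetries), cuts back to $H^+$, and recovers $P(E_k;\de H)\to P(E;\de H)$ from the continuity of the $BV$ trace operator. Your strategy can be made to work, but as written it contains one step whose justification is false. The deduction ``$h_k\to h$ in $L^1$ and $|Dh_k|\to|Dh|$, \emph{whence} $L(h_k)\to L(h)$'' is invalid: strict convergence in $BV$ does \emph{not} imply convergence of the generalized graph-length functional. (Take $h(x)=x$ on $[0,1]$ and $h_k$ the $k$-step staircase with jumps of size $1/k$: then $h_k\to h$ in $L^1$ and $|Dh_k|([0,1])=|Dh|([0,1])=1$, but the generalized length of the graph of $h_k$ is $2$, while that of $h$ is $\sqrt{2}$.) For your specific piecewise-affine interpolants the conclusion is in fact true, but it must be proved from the structure of the interpolant itself --- e.g.\ Jensen's inequality on each cell gives $\int\sqrt{1+(h_k')^2}\,dx\le|(\leb^1,Dh)|$, which combined with lower semicontinuity yields convergence --- not deduced from strict convergence.

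A second point is glossed over and could make the construction itself fail, not merely its justification. The identities $P(E;H^+)=L(h)$ and $P(E)=L(h)+2h(0^+)$ for a general $BV$ profile are the content of the (nontrivial) perimeter formula for Steiner-symmetric sets, and they involve the set $\set{h>0}$: the absolutely continuous part of the length is integrated only over $\set{h>0}$, the singular part must include the Cantor part of $Dh$ (not only ``the sum of jumps''), and vertical drops to zero at $\de^*\set{h>0}$ enter as jump contributions. Your interpolant lives on the whole interval $[0,R_k]$, so wherever $h_k$ vanishes on a set of positive length the corresponding segment of the $x$-axis contributes to the graph length of $h_k$ but not to $P(E_k)$, while wherever $h_k>0$ across a gap of $\set{h>0}$ you create spurious perimeter of order twice the length of that gap. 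Unless the partition is chosen compatibly with $\set{h>0}$ (or one first reduces to the case where $\set{h>0}$ is an interval), the implication $L(h_k)\to L(h)\Rightarrow P(E_k;H^+)\to P(E;H^+)$ breaks down. Both issues are repairable, but they are exactly the technicalities that the paper's reflection-plus-trace argument is designed to avoid.
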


\begin{proof}
Since $\leb^2(E)<+\infty$, it is not restrictive to assume that $E$ is bounded, see~\cite{M12}*{Remarks~13.12}. Let us set
\begin{equation*}
\tilde{E}=\set*{(x,y)\in\R^2 : (|x|,y)\in E},
\end{equation*}
the symmetrization of $E$ with respect to the $y$-axis. It is immediate to see that $\tilde{E}\in\S_x\cap\S_y^*$ is bounded with $P(\tilde{E})<+\infty$ and $P(\tilde{E};\de H)=0$. By~\cite{M12}*{Theorem~13.8}, there exists a sequence $(\tilde{E}_k)_{k\in\N}$ of bounded open sets with smooth boundary such that 
\begin{equation}\label{eq:sym_approx_conv}
\chi_{\tilde{E}_k}\to\chi_{\tilde{E}}\ \text{in}\ L^1,
\qquad 
P(\tilde{E}_k)\to P(\tilde{E}),
\end{equation}
as $k\to+\infty$. Arguing as in the proof of~\cite{F17}*{Lemma~2.1}, one can prove that $(\tilde{E}_k)_{k\in\N}\subset\S_x\cap\S_y^*$. Now let us set $E_k=\tilde{E}_k\cap H^+$ for each $k\in\N$. Since $P(\tilde{E})=2P(E;H^+)$ and $P(\tilde{E}_k)=2P(E_k;H^+)$ by symmetry, from~\eqref{eq:sym_approx_conv} we deduce that
\begin{equation}\label{eq:approx_conv}
\chi_{E_k}\to\chi_E\ \text{in}\ L^1,
\qquad 
P(E_k;H^+)\to P(E;H^+),
\end{equation}
as $k\to+\infty$. We claim that $P(E_k)\to P(E)$ as $k\to+\infty$. Indeed, by~\eqref{eq:approx_conv}, it is enough to show that 
\begin{equation}\label{eq:approx_conv_traces}
P(E_k;\de H)\to P(E;\de H)
\end{equation}
as $k\to+\infty$. Recalling~\eqref{eq:approx_conv}, the limit in~\eqref{eq:approx_conv_traces} follows by the definition and the continuity of the trace operator for $BV$~functions on bounded open sets with Lipschitz boundary, see~\cite{AFP00}*{Theorems~3.87 and~3.88} (see also~\cite{EG15}*{Theorem~5.6}). By a standard approximation by linear interpolation and a diagonalization argument, one can replace $(E_k)_{k\in\N}$ with a sequence of bounded open sets with polyhedral boundaries. 
\end{proof}

In the proof of Theorem~\ref{th:rearrang} below, we will need the following elementary inequalities, which we prove here for the reader's convenience.

\begin{lemma}\label{lemma:elem_ineq}
Let $N\ge1$ and let $a_k\in\R^n$ for all $k=1,\dots,N$. Then
\begin{equation}\label{eq:elem_ineq_1}
\sum_{k=1}^N\sqrt{1+|a_k|^2}-\sqrt{1+\abs*{\sum_{k=1}^N a_k}^2}
\ge
\frac{N-1}{\sum_{k=1}^N\sqrt{1+|a_k|^2}+\sqrt{1+\abs*{\sum_{k=1}^N a_k}^2}} 
\end{equation} 
and thus, in particular,
\begin{equation}\label{eq:elem_ineq_2}
\sum_{k=1}^N\sqrt{1+|a_k|^2}-\sqrt{1+\abs*{\sum_{k=1}^N a_k}^2}
\ge
\frac{N-1}{2\sum_{k=1}^N\sqrt{1+|a_k|^2}}. 
\end{equation} 
\end{lemma}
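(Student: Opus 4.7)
The plan is to show that the main inequality~\eqref{eq:elem_ineq_1} is equivalent, after multiplying both sides by $S+T$ where $S:=\sum_{k=1}^N\sqrt{1+|a_k|^2}$ and $T:=\sqrt{1+|\sum_{k=1}^N a_k|^2}$, to the cleaner estimate
\begin{equation*}
S^2 - T^2 \ge N-1.
\end{equation*}
Since $S+T>0$ and $S\ge T$ (this will fall out of the argument), the two inequalities are equivalent.

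To establish $S^2-T^2\ge N-1$, I would simply expand both sides. On the one hand,
\begin{equation*}
S^2 = \sum_{k=1}^N (1+|a_k|^2) + 2\sum_{1\le i<j\le N}\sqrt{(1+|a_i|^2)(1+|a_j|^2)} = N + \sum_{k=1}^N|a_k|^2 + 2\sum_{i<j}\sqrt{(1+|a_i|^2)(1+|a_j|^2)},
\end{equation*}
while on the other hand
\begin{equation*}
T^2 = 1 + \Bigl|\sum_{k=1}^N a_k\Bigr|^2 = 1 + \sum_{k=1}^N|a_k|^2 + 2\sum_{i<j}\langle a_i,a_j\rangle.
\end{equation*}
Subtracting, the quadratic terms $\sum_k|a_k|^2$ cancel and we obtain
\begin{equation*}
S^2-T^2 = (N-1) + 2\sum_{i<j}\Bigl(\sqrt{(1+|a_i|^2)(1+|a_j|^2)}-\langle a_i,a_j\rangle\Bigr).
\end{equation*}
Each summand in the last term is non-negative by two applications of Cauchy--Schwarz, namely $\langle a_i,a_j\rangle\le|a_i||a_j|\le\sqrt{(1+|a_i|^2)(1+|a_j|^2)}$, and~\eqref{eq:elem_ineq_1} follows.

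For the consequence~\eqref{eq:elem_ineq_2}, I would derive the pointwise bound $T\le S$, which immediately gives $S+T\le 2S$ in the denominator of~\eqref{eq:elem_ineq_1}. To see $T\le S$, consider the vectors $b_k:=(1,a_k)\in\R\times\R^n$. Their norms are $|b_k|=\sqrt{1+|a_k|^2}$, and $\sum_k b_k=(N,\sum_k a_k)$, so by the triangle inequality
\begin{equation*}
\sqrt{N^2+\Bigl|\sum_{k=1}^N a_k\Bigr|^2} = \Bigl|\sum_{k=1}^N b_k\Bigr| \le \sum_{k=1}^N|b_k| = S,
\end{equation*}
and since $N\ge 1$ one has $T=\sqrt{1+|\sum_k a_k|^2}\le\sqrt{N^2+|\sum_k a_k|^2}\le S$.

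The proof is essentially algebraic and the only substantive ingredient is Cauchy--Schwarz; there is no real obstacle, but the step that requires a moment of care is recognising that the reduction $S^2-T^2\ge N-1$ isolates exactly $N-1$ from the expansion, with the remaining cross terms being non-negative. The bound $T\le S$ via the auxiliary vectors $(1,a_k)$ is the cleanest way to dispose of the denominator in passing from~\eqref{eq:elem_ineq_1} to~\eqref{eq:elem_ineq_2}.
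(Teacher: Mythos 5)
Your proof is correct and follows essentially the same route as the paper: expand the squares, cancel the $\sum_k|a_k|^2$ terms, and bound the cross terms from below via Cauchy--Schwarz to get $S^2-T^2\ge N-1$. The only cosmetic difference is that you establish $T\le S$ by the triangle inequality applied to the lifted vectors $(1,a_k)$, whereas the paper reads it off directly from $S^2-T^2\ge N-1\ge 0$; both are fine.
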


\begin{proof}
We have
\begin{equation*}
\left(\sum_{k=1}^N\sqrt{1+|a_k|^2}\right)^2=N+\sum_{k=1}^N |a_k|^2+2\sum_{1\le h<k\le N}\sqrt{(1+|a_h|^2)(1+|a_k|^2)}
\end{equation*}
and
\begin{equation*}
1+\abs*{\sum_{k=1}^N a_k}^2=1+\sum_{k=1}^N |a_k|^2+2\sum_{1\le h<k\le N}\scalar*{a_h,a_k},
\end{equation*}
Therefore, by Cauchy-Schwarz inequality, we get
\begin{align*}
\left(\sum_{k=1}^N\sqrt{1+|a_k|^2}\right)^2&-\left(1+\abs*{\sum_{k=1}^N a_k}^2\right)=\\
&=N-1+2\sum_{1\le h<k\le N}\left(\sqrt{(1+|a_h|^2)(1+|a_k|^2)}-\scalar*{a_h,a_k}\right)\ge N-1
\end{align*}
and thus
\begin{align*}
\sum_{k=1}^N\sqrt{1+|a_k|^2}&-\sqrt{1+\abs*{\sum_{k=1}^N a_k}^2}
=
\frac{\left(\sum_{k=1}^N\sqrt{1+|a_k|^2}\right)^2-\left(1+\abs*{\sum_{k=1}^N a_k}^2\right)}{\sum_{k=1}^N\sqrt{1+|a_k|^2}+\sqrt{1+\abs*{\sum_{k=1}^N a_k}^2}}\\
&\ge
\frac{N-1}{\sum_{k=1}^N\sqrt{1+|a_k|^2}+\sqrt{1+\abs*{\sum_{k=1}^N a_k}^2}},
\end{align*} 
which is~\eqref{eq:elem_ineq_1}. In particular, we have $\sum_{k=1}^N\sqrt{1+|a_k|^2}\ge\sqrt{1+\abs*{\sum_{k=1}^N a_k}^2}$ and so from~\eqref{eq:elem_ineq_1} we deduce~\eqref{eq:elem_ineq_2}.
\end{proof}

\subsection{Rearrangement theorem}

We are now ready to prove the main result of this section. The argument follows the strategy outlined in the proof of~\cite{M12}*{Theorem~14.4}.

\begin{theorem}\label{th:rearrang}
Let $E\subset H$ be a measurable set such that $\leb^2(E)<+\infty$, $P(E)<+\infty$ and $E\in\S_y^*$. Let $E^\bigstar\subset H$ be the set defined in~\eqref{eq:def_rearrangement}. Then $\leb^2(E^\bigstar)=\leb^2(E)$, $E^\bigstar\in\S_y^*$ and 
\begin{equation}\label{eq:th_rearrang}
\min\set*{P(E)-P(E^\bigstar),\ P(E;H^+)-P(E^\bigstar;H^+)}\ge0.
\end{equation}
Moreover, equality holds in~\eqref{eq:th_rearrang} if and only if $E^y_t$ is equivalent to an interval for a.e.\ $t\in\R$, in which case $E^\bigstar=E$ up to negligible sets.
\end{theorem}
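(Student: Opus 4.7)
The plan is to follow the classical Steiner-type rearrangement argument outlined in \cite{M12}*{Theorem~14.4}: reduce to polyhedral sets via Lemma~\ref{lemma:polyhedral_approx_y-Steiner}, prove the perimeter inequality by explicit section-by-section computation using Lemma~\ref{lemma:elem_ineq}, and pass to the limit by lower semicontinuity.

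First I would verify the easy structural properties. Cavalieri's principle gives $\leb^2(E^\bigstar) = \int_\R \lambda_E(t)\,dt = \leb^2(E)$, so the area is preserved. To see $E^\bigstar \in \S_y^*$, I would set $\psi_E := \phi_E - \tau_E$ and check that $\psi_E$ is non-decreasing and $\psi_E + \lambda_E$ is non-increasing on $]0, +\infty[$. Between jump points of $\phi_E$, $\tau_E$ is locally constant, so both functions inherit monotonicity from $\phi_E$ and $\lambda_E$; at each jump $d_k$, a direct computation using \eqref{eq:def_jump} shows that the jump of $\psi_E$ equals $\min(\phi_E(d_k+) - \phi_E(d_k),\, \lambda_E(d_k) - \lambda_E(d_k+)) \ge 0$ and the jump of $\psi_E + \lambda_E$ equals $-\bigl(\lambda_E(d_k) - \lambda_E(d_k+) - (\phi_E(d_k+) - \phi_E(d_k))\bigr)^+ \le 0$. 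Together with their evenness, this forces the horizontal sections of $E^\bigstar$ to be a nested family of symmetric intervals in $|y|$, equivalently, $E^\bigstar \in \S_y^*$. The same bookkeeping shows $\psi_E \ge 0$, so $E^\bigstar \subset H$.

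For the perimeter inequality, I would apply Lemma~\ref{lemma:polyhedral_approx_y-Steiner} to approximate $E$ by bounded open polyhedral $E_k \in \S_y^*$ with $\chi_{E_k} \to \chi_E$ in $L^1$, $P(E_k) \to P(E)$, and $P(E_k; H^+) \to P(E; H^+)$. For each $E_k$, on the strips between topology-change heights the horizontal sections decompose as $\bigcup_{i=1}^{N}(a_i(t), b_i(t))$ with piecewise affine endpoints. The perimeter $P(E_k)$ splits into an \emph{oblique} part (arc lengths of the graphs of the $a_i, b_i$) and a \emph{horizontal} part at the finitely many critical heights $d_j$. Applying inequality \eqref{eq:elem_ineq_1} to the $(2N-1)$-tuple $(-a_2',\dots,-a_N',b_1',\dots,b_N')$ yields
\[
\sum_{i=1}^{N}\!\bigl(\sqrt{1 + (a_i')^2} + \sqrt{1 + (b_i')^2}\bigr) \ge \sqrt{1 + (a_1')^2} + \sqrt{1 + \bigl(-\!\!\sum_{i \ge 2}\! a_i' + \sum_i b_i'\bigr)^{\!2}},
\]
and the right-hand side is the oblique contribution of $E_k^\bigstar$ on that strip, since $\psi_{E_k}' = a_1'$ and $(\psi_{E_k} + \lambda_{E_k})' = a_1' + \sum_i(b_i' - a_i')$ where $\tau_{E_k}$ is constant. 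The definition \eqref{eq:def_jump} of $j_k$ is engineered so that the horizontal segment of $E_k^\bigstar$ at each $d_j$ has length exactly $\lambda_{E_k}(d_j) - \lambda_{E_k}(d_j+)$, which matches the corresponding contribution in $E_k$. This gives $P(E_k) \ge P(E_k^\bigstar)$. The $H^+$-estimate follows from the inclusion $\set*{\phi_{E_k} = 0} \subset \set*{\psi_{E_k} = 0}$, which yields $P(E_k^\bigstar; \de H) \ge P(E_k; \de H)$ and thus
\[
P(E_k; H^+) - P(E_k^\bigstar; H^+) = \bigl(P(E_k) - P(E_k^\bigstar)\bigr) + \bigl(P(E_k^\bigstar; \de H) - P(E_k; \de H)\bigr) \ge 0.
\]

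To conclude, I would pass to the limit: from $\chi_{E_k} \to \chi_E$ in $L^1$ I would extract a.e.\ convergence $\lambda_{E_k} \to \lambda_E$ and $\phi_{E_k} \to \phi_E$, identify the corresponding limit of $\tau_{E_k}$ with $\tau_E$, and deduce $\chi_{E_k^\bigstar} \to \chi_{E^\bigstar}$ in $L^1$; lower semicontinuity of $P$ and $P(\cdot; H^+)$ then yields the two inequalities in \eqref{eq:th_rearrang}. The equality case follows by tracing \eqref{eq:elem_ineq_1}: its gap is strictly positive whenever $N \ge 2$, so equality in \eqref{eq:th_rearrang} forces $N(t) = 1$ for a.e.\ $t$, i.e.\ $E^y_t$ is equivalent to an interval for a.e.\ $t$, and Lemma~\ref{lemma:only_if_rearrang} then gives $E^\bigstar = E$ up to negligible sets. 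The main obstacle is precisely this passage to the limit: the jump-based definition \eqref{eq:def_translating_function} of $\tau_E$ depends on the fine discontinuity structure of $\phi_E$, which is not stable under $L^1$-perturbations of $E$, so I expect the most delicate step to be verifying that the spurious polyhedral jumps of $\phi_{E_k}$ introduced by Lemma~\ref{lemma:polyhedral_approx_y-Steiner} aggregate consistently with $\tau_E$ in the limit.
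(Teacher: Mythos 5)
Your strategy coincides with the paper's: polyhedral reduction via Lemma~\ref{lemma:polyhedral_approx_y-Steiner}, a strip-by-strip comparison of oblique and horizontal boundary contributions using Lemma~\ref{lemma:elem_ineq}, the trace inclusion on $\de H$ for the $H^+$-estimate, and lower semicontinuity to conclude. The structural part is correct and, in fact, slightly more complete than the paper's: your jump computation showing that $\phi_E-\tau_E$ is non-decreasing and $\phi_E-\tau_E+\lambda_E$ is non-increasing (whence the sections of $E^\bigstar$ are nested and $E^\bigstar\in\S_y^*$) is exactly the right verification, and your application of \eqref{eq:elem_ineq_1} to the $(2N-1)$-tuple reproduces the paper's key estimate. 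The paper simplifies the polyhedral step by normalizing the approximants so that the outer normal is never orthogonal to $\mathrm{e}_1$, which makes $\lambda_{E_k}$, $\tilde u$, $\tilde v$ continuous and removes the separate bookkeeping of horizontal segments; your claim that the horizontal contribution of $E_k^\bigstar$ at a critical height ``matches'' that of $E_k$ should in general read ``is at most'', but this only helps you.

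Two steps remain genuinely open in your sketch. First, the convergence $\chi_{E_k^\bigstar}\to\chi_{E^\bigstar}$ in $L^1$, which you rightly flag: trying to pass to the limit in $\phi_{E_k}$ and $\tau_{E_k}$ separately would fail, precisely because $\tau_E$ is built from the jump set of $\phi_E$ and is not stable under $L^1$-perturbations. The paper bypasses this with the single Tonelli estimate \eqref{eq:tonelli}, bounding $\leb^2(E_k^\bigstar\bigtriangleup E^\bigstar)$ by $\int_\R|\lambda_{E_k}-\lambda_E|\,dt\le\leb^2(E_k\bigtriangleup E)$, so that no convergence of $\phi_{E_k}$ or $\tau_{E_k}$ is ever needed; note, however, that identifying $\int_\R|\lambda_{E_k}-\lambda_E|\,dt$ with $\leb^2(E_k^\bigstar\bigtriangleup E^\bigstar)$ itself requires the sections of $E_k^\bigstar$ and $E^\bigstar$ at each height to be mutually nested, i.e.\ some control on the left endpoints $\phi-\tau$, so the difficulty you identify does not entirely disappear and must be addressed one way or the other. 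Second, the ``only if'' part of the equality case: your argument (``the gap in \eqref{eq:elem_ineq_1} is strictly positive when $N\ge2$'') lives at the polyhedral level, while the equality hypothesis concerns the limit set $E$, for which there is no $N(t)$ a priori. The paper resolves this by retaining the quantitative form of the inequality, $\big(P(E_k)-P(E_k^\bigstar)\big)P(E_k)\ge\leb^1(D_k)^2$ with $D_k$ the set of heights where $(E_k)^y_t$ is not an interval (see \eqref{eq:approx_rearrang_ineq_quant}), deducing $\leb^1(D_k)\to0$ under equality, and then combining Fatou's lemma, the lower semicontinuity of perimeter along sections (\cite{M12}*{Propositions~12.15 and~14.5}) and the one-dimensional isoperimetric inequality to get $P(E^y_t)=2$, hence that $E^y_t$ is an interval, for a.e.\ $t$. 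Without some such transfer argument your equality case is not proved.
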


\begin{proof}
We divide the proof in two steps.

\medskip

\textit{Step~1}. Let us assume that $E$ is a bounded open set in $\R^2$ with polyhedral boundary and that the outer unit normal to $E$ (that is elementarily defined at $\haus^1$-a.e.\ point of $\de E$) is never orthogonal to $\mathrm{e}_1$. By this assumption and by the implicit function theorem, we get that 
\begin{align}
E&=\bigcup_{h=1}^M\set*{(x,y)\in\R^2 : y\in I_h^+\cup I_h^-,\ x\in\bigcup_{k=1}^{N(h)}\left]u_h^k(y), v_h^k(y)\right[},\label{eq:boundary_polyhed}\\
\de E&=\bigcup_{h=1}^M\bigcup_{k=1}^{N(h)}\Gamma(u_h^k,I^+_h\cup I^-_h)\cup\Gamma(v_h^k,I^+_h\cup I^-_h).\nonumber
\end{align}
Here $\set{I_h^\pm}_{h=1,\dots,M}$ is a finite family of non-overlapping bounded intervals such that 
\begin{equation*}
I_h^+\subset[0,+\infty[, \qquad I_h^-=-I_h^+,\qquad I=\bigcup_{h=1}^M I^+_h\cup I^-_h,
\end{equation*}
where $I=\set*{t\in\R : \leb^1(E^y_t)>0}$ and $u_h^k,v_h^k\colon\R\to[0,+\infty[$, with $h=1,\dots,M$, $k=1,\dots,N(h)$, are affine even functions on $\R$ such that $u_h^k\le v_h^k$ on $I^+_h$ and $u_h^k$ (respectively, $v_h^k$) is non-decreasing (respectively, non-increasing) on $]0,+\infty[$, see Figure~\ref{fig:rearrang_polyhed}. 
For $D\subset\R$ and $u\colon D\to\R$, we denote by $\Gamma(u,D)=\set{(z,t)\in\R^2 : z\in D, t=u(z)}$ the graph of $u$ over $D$. 

\begin{figure}
\begin{tikzpicture}[scale=.8]

\draw [thick,->] (-0.5,0) -- (8,0); 
\draw [thick,->] (0,-0.5) -- (0,4.5); 
\draw (8,0) node [right] {\small $x$};
\draw (0,4.5) node [above] {\small $y$};

\draw (1,0) -- (1.5,2) -- (2,3) -- (2.5,2) -- (3,0) -- cycle;
\fill [gray,opacity=0.3] (1,0) -- (1.5,2) -- (2,3) -- (2.5,2) -- (3,0) -- cycle;
\draw (5,0) -- (5.25,2) -- (5.5,3) -- (6,4) -- (6.25,3) -- (6.5,2) -- (7,0) -- cycle;
\fill [gray,opacity=0.3] (5,0) -- (5.25,2) -- (5.5,3) -- (6,4) -- (6.25,3) -- (6.5,2) -- (7,0) -- cycle;

\draw [dashed] (-0.1,2) -- (6.5,2);
\draw [dashed] (-0.1,3) -- (6.27,3);
\draw [dashed] (-0.1,4) -- (6,4);

\draw (0,1) node [left] {\small $I_1^+$};
\draw (0,2.5) node [left] {\small $I_2^+$};
\draw (0,3.5) node [left] {\small $I_3^+$};

\draw (0.75,1) node {\small $u_1^1$};
\draw (3.25,1) node {\small $v_1^1$};

\draw (4.6,1) node {\small $u_1^2$};
\draw (7.25,1) node {\small $v_1^2$};

\draw (5.2,3.5) node {\small $u_3^1$};
\draw (6.6,3.5) node {\small $v_3^1$};

\draw (1.1,2.5) node {\small $u_2^1$};
\draw (2.75,2.5) node {\small $v_2^1$};

\draw (4.8,2.5) node {\small $u_2^2$};
\draw (6.8,2.5) node {\small $v_2^2$};

\draw (2,0.5) node {\small $E$};
\draw (6,0.5) node {\small $E$};

\begin{scope} [shift={(10,0)}]

\draw [thick,->] (-0.5,0) -- (6,0); 
\draw [thick,->] (0,-0.5) -- (0,4.5); 
\draw (6,0) node [right] {\small $x$};
\draw (0,4.5) node [above] {\small $y$};

\draw (1,0) -- (1.5,2) -- (2,3) -- (2.5,4) -- (2.8,3) -- (3.8,2) -- (5,0) -- cycle;
\fill [gray,opacity=0.3] (1,0) -- (1.5,2) -- (2,3) -- (2.5,4) -- (2.8,3) -- (3.8,2) -- (5,0) -- cycle;

\draw [dashed] (-0.1,2) -- (3.8,2);
\draw [dashed] (-0.1,3) -- (2.8,3);
\draw [dashed] (-0.1,4) -- (2.5,4);

\draw (1.1,2.5) node {\small $\tilde{u}$};
\draw (3.75,2.5) node {\small $\tilde{v}$};

\draw (3,0.5) node {\small $E^\bigstar$};

\end{scope}

\end{tikzpicture}
\caption{If $E$ is a bounded open set with polyhedral boundary, with outer unit normal never orthogonal to $\mathrm{e}_1$, then $\de E$ is parametrized as in~\eqref{eq:boundary_polyhed}. In particular, $u_h^k=v_h^k$ on $\de I\cap\de I_h$. Moreover, if $\de I_i\cap \de I_h\ne\varnothing$ and $1\le k\le N(h)$, then either $u_h^k =v_h^k$, or there exists $1\le j\le N(i)$ such that $u_h^k=u_i^j$ and $v_h^k=v_i^j$ on $\de I_i\cap\de I_h$. These two properties, guaranteed by~\eqref{eq:boundary_polyhed}, imply the continuity of~$\lambda_E$, $\tilde{u}$ and~$\tilde{v}$.}
\label{fig:rearrang_polyhed}
\end{figure}

The function $\lambda_E\colon I\to[0,+\infty[$ of Lemma~\ref{lemma:def_essinf_slice_profile} is given by
\begin{equation*}
\lambda_E(y)=\sum_{k=1}^{N(h)}v_h^k(y)-u_h^k(y), \qquad \forall y\in I^+_h\cup I^-_h.
\end{equation*}
Note that $\lambda_E$ is a continuous function. The function $\phi_E\colon I\to[0,+\infty[$ of Lemma~\ref{lemma:def_essinf_slice_profile} is given by
\begin{equation*}
\phi_E(y)=u_h^1(y), \qquad \forall y\in I^+_h\cup I^-_h.
\end{equation*}
We claim that, since $E\in\S_y^*$, $\de E\cap\de H$ is a symmetric interval centered at the origin and
\begin{equation*}
\de E\cap\de H=\set*{y\in\R : \phi_E(y)=0}=\phi_E^{-1}(0).
\end{equation*}
Indeed, settting
\begin{equation*}
\bar{h}=\max\set*{h=1,\dots,M :\ \text{for all}\ r\le h,\ u_r^1(y)=0\ \text{for every}\ y\in I^+_r\cup I^-_r}
\end{equation*}
or $\bar{h}=0$ if the condition in brackets is empty,  we get
\begin{equation}\label{eq:traccia_E}
\de E\cap\de H=\bigcup_{h=1}^{\bar{h}} I^+_h\cup I^-_h.
\end{equation} 
Now, let us set $\inf I_h^+=d_{h-1}$, $\sup I_h^+=d_h$. By construction, the discontinuity points of~$\phi_E$ belong to the finite set $D=\set{d_h : h=1,\dots,M}$. Therefore, the function $\tau_E\colon I\to[0,+\infty[$ defined in~\eqref{eq:def_translating_function} is given by
\begin{equation*}
\tau_E(y)=\sum_{l=1}^{h-1} \big(u_{l+1}^1(d_l)-u_l^1(d_l)\big)\chi_{(d_l,+\infty)}(|y|), \qquad \forall y\in I_h^+\cup I_h^-.
\end{equation*} 
Let also 
\begin{equation*}
\tilde{u}(y)=\phi_E(y)-\tau_E(y), \qquad y\in I
\end{equation*}  
and
\begin{equation*}
\tilde{v}(y)=\tilde{u}(y)+\lambda_E(y), \qquad y\in I.
\end{equation*}
The functions $\tilde{u},\tilde{v}$ are affine, even, non-negative and continuous functions on $I$ such that $\tilde{u}\le \tilde{v}$ and $\tilde{u}$ (respectively, $\tilde{v}$) is non-decreasing (respectively, non-increasing) on $I^+=I\cap]0,+\infty[$. Recalling~\eqref{eq:def_rearrangement}, we thus find that
\begin{align*}
E^\bigstar&=\set*{(x,y)\in\R^2 : y\in I,\ x\in\left]\tilde{u}(y),\tilde{v}(y)\right[},\\
\de E^\bigstar&=\Gamma(\tilde{u},I)\cup\Gamma(\tilde{v},I).
\end{align*}
In particular, we get that $\leb^2(E^\bigstar)=\leb^2(E)$ and $E^\bigstar\in\S_y^*$. Note that
\begin{equation*}
\phi_{E^\bigstar}(y)=\tilde{u}(y)\le\phi_E(y) \qquad y\in I,
\end{equation*}
so that $\de E\cap\de H\subset\de E^\bigstar\cap\de H$. Thus
\begin{equation}\label{eq:polyhed_trace_increase}
P(E;\de H)\le P(E^\bigstar;\de H).
\end{equation}
Indeed, let 
\begin{equation*}
\tilde{h}=\max\set*{h=1,\dots,M :\ \text{for all}\ r\le h,\ u_r^1(y)\ \text{is constant on}\ I^+_r\cup I^-_r}
\end{equation*}
or $\tilde{h}=0$ if the condition in brackets is empty. Then, we get $\tilde{h}\ge\bar{h}$, and hence
\begin{equation}\label{eq:traccia_E_star}
\de E^\bigstar\cap\de H=\bigcup_{h=1}^{\tilde{h}} I^+_h\cup I^-_h=(\de E\cap\de H)\cup\bigcup_{h=\bar{h}}^{\tilde{h}} I^+_h\cup I^-_h,
\end{equation} 
see Figure~\ref{fig:trace_increase}.

\begin{figure}
\begin{tikzpicture}[scale=.8]
\draw [thick,->] (-0.5,0) -- (6,0); 
\draw [thick,->] (0,-0.5) -- (0,4.5); 
\draw (6,0) node [right] {\small $x$};
\draw (0,4.5) node [above] {\small $y$};
\draw (0,0) -- (0,1) -- (1,0) -- cycle;
\fill [gray,opacity=0.3] (0,0) -- (0,1) -- (1,0) -- cycle;
\draw (1,0) -- (1,2) -- (3,0) -- cycle;
\fill [gray,opacity=0.3] (1,0) -- (1,2) -- (3,0) -- cycle;
\draw (3,0) -- (3,2) -- (4,3) -- (4,4) -- (5,0) -- cycle;
\fill [gray,opacity=0.3] (3,0) -- (3,2) -- (4,3) -- (4,4) -- (5,0) -- cycle;
\draw [dashed] (-0.1,1) -- (4.8,1);
\draw [dashed] (-0.1,2) -- (4.6,2);
\draw [dashed] (-0.1,3) -- (4.3,3);
\draw [dashed] (-0.1,4) -- (4,4);
\draw (4,0.5) node {\small $E$};
\draw [ultra thick] (0,0) -- (0,1);
\draw (-1.05,0.5) node {\tiny $\de E\cap\de H$};
\draw [decorate,decoration={brace,amplitude=3,raise=2}](0,0.1) -- (0,0.9);
\begin{scope} [shift={(9,0)}]
	\draw [thick,->] (-0.5,0) -- (6,0); 
	\draw [thick,->] (0,-0.5) -- (0,4.5); 
	\draw (6,0) node [right] {\small $x$};
	\draw (0,4.5) node [above] {\small $y$};
	\draw (0,0) -- (0,2) -- (1,3) -- (1,4) -- (1.6,2) -- (3.8,1) -- (5,0) -- cycle;
	\fill [gray,opacity=0.3] (0,0) -- (0,2) -- (1,3) -- (1,4) -- (1.6,2) -- (3.8,1) -- (5,0) -- cycle;
	\draw [dashed] (-0.1,1) -- (3.8,1);
	\draw [dashed] (-0.1,2) -- (1.6,2);
	\draw [dashed] (-0.1,3) -- (1.3,3);
	\draw [dashed] (-0.1,4) -- (1,4);
	\draw (2,0.5) node {\small $E^\bigstar$};
	\draw [ultra thick] (0,0) -- (0,2);
	\draw (-1.3,1) node {\tiny $\de E^\bigstar\cap\de H$};
	\draw [decorate,decoration={brace,amplitude=5,raise=2}](0,0.1) -- (0,1.9);
\end{scope}
\end{tikzpicture}
\caption{In general, the amount of perimeter on $\de H$ of a bounded open set $E$ with polyhedral boundary is increased by the rearrangement defined in~\eqref{eq:def_rearrangement}, so that $P(E;\de H)\le P(E^\bigstar;\de H)$. Precisely, the set $\de E^\bigstar\cap\de H$ coincides with the connected component of~$\set{\phi_E'=0}$ containing $\de E\cap\de H$.}
\label{fig:trace_increase}
\end{figure}
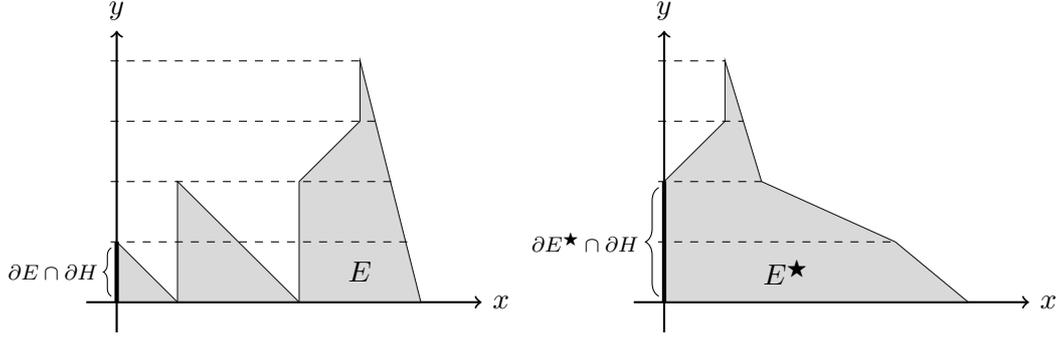

We now prove that $P(E^\bigstar)\le P(E)$. We have
\begin{align*}
P(E)&=2\sum_{h=1}^M\int_{I_h^+}\sum_{k=1}^{N(h)}\sqrt{1+|(u_h^k)'|^2}+\sqrt{1+|(v_h^k)'|^2}\ dy,\\
P(E^\bigstar)&=2\int_{I^+}\sqrt{1+|\tilde{u}'|^2}+\sqrt{1+|\tilde{v}'|^2}\ dy\\
&=2\sum_{h=1}^M\int_{I_h^+}\left(\sqrt{1+|(u_h^1)'|^2}+\sqrt{1+\abs*{(v_h^1)'+\sum_{k=2}^{N(h)}\big((v_h^k)'-(u_h^k)'\big)}^2}\right) dy.
\end{align*}
By inequality~\eqref{eq:elem_ineq_2} in Lemma~\ref{lemma:elem_ineq}, we obtain that
\begin{align*}
\sqrt{1+|(v_h^1)'|^2}&+\sum_{k=2}^{N(h)}\sqrt{1+|(u_h^k)'|^2}+\sqrt{1+|(v_h^k)'|^2}
-
\sqrt{1+\abs*{(v_h^1)'+\sum_{k=2}^{N(h)}\big((v_h^k)'-(u_h^k)'\big)}^2}\\
&\ge\frac{N(h)-1}{\sqrt{1+|(v_h^1)'|^2}+\sum_{k=2}^{N(h)}\sqrt{1+|(u_h^k)'|^2}+\sqrt{1+|(v_h^k)'|^2}}
\end{align*}
and therefore $P(E^\bigstar)\le P(E)$ since $N(h)\ge1$ for every $h$. The inequality above allows us to deduce some more precise information. Let $D\subset I$ be the set of those $t\in I$ such that $E^y_t$ is not an interval, so that $N(h)\ge2$ if and only if $I_h^\pm\cap D=\varnothing$. Then we get
\begin{equation*}
P(E)-P(E^\bigstar)\ge 2\sum_{h=1}^M\int_{I_h^+\cap D}\frac{1}{\sqrt{1+|(v_h^1)'|^2}+\sum_{k=2}^{N(h)}\sqrt{1+|(u_h^k)'|^2}+\sqrt{1+|(v_h^k)'|^2}}\ dy.
\end{equation*}
Since clearly it holds
\begin{equation*}
P(E)\ge 2\sum_{h=1}^M\int_{I_h^+\cap D}\sqrt{1+|(v_h^1)'|^2}+\sum_{k=2}^{N(h)}\sqrt{1+|(u_h^k)'|^2}+\sqrt{1+|(v_h^k)'|^2}\ dy,
\end{equation*}
by Cauchy-Schwarz inequality we deduce that
\begin{equation}\label{eq:quantitative_ineq_polyhedra_interval}
\big(P(E)-P(E^\bigstar)\big) P(E) \ge \leb^1(D)^2.
\end{equation}
In addition, from~\eqref{eq:polyhed_trace_increase} we deduce that
\begin{equation*}
P(E;H^+)-P(E^\bigstar;H^+)\ge P(E^\bigstar;\de H)-P(E;\de H)\ge0.
\end{equation*}
We can also obtain a stronger inequality. Indeed, by~\eqref{eq:traccia_E} and~\eqref{eq:traccia_E_star}, we have
\begin{align*}
P(E;H^+)&=2\sum_{h=1}^{\bar{h}}\int_{I_h^+}\sqrt{1+|(v_h^1)'|^2}+\sum_{k=2}^{N(h)}\sqrt{1+|(u_h^k)'|^2}+\sqrt{1+|(v_h^k)'|^2}\ dy\\
&\quad+2\sum_{h=\bar{h}+1}^M\int_{I_h^+}\sum_{k=1}^{N(h)}\sqrt{1+|(u_h^k)'|^2}+\sqrt{1+|(v_h^k)'|^2}\ dy,\\
P(E^\bigstar;H^+)&=2\int_{I^+\cap\set{\tilde{u}>0}}\sqrt{1+|\tilde{u}'|^2}\ dy+2\int_{I_h^+}\sqrt{1+|\tilde{v}'|^2}\ dy\\
&=2\sum_{h=1}^{\tilde{h}}\int_{I_h^+}\sqrt{1+\abs*{(v_h^1)'+\sum_{k=2}^{N(h)}\big((v_h^k)'-(u_h^k)'\big)}^2}\ dy\\
&\quad+2\sum_{h=\tilde{h}+1}^M\int_{I_h^+}\sqrt{1+|(u_h^1)'|^2}+\sqrt{1+\abs*{(v_h^1)'+\sum_{k=2}^{N(h)}\big((v_h^k)'-(u_h^k)'\big)}^2}\ dy.
\end{align*}
Then, since $\tilde{h}\ge\bar{h}$, arguing as before we get that
\begin{equation*}
\big(P(E;H^+)-P(E^\bigstar;H^+)\big) P(E;H^+)\ge \leb^1(D)^2.
\end{equation*}

\medskip

\textit{Step~2}. Let $E\subset H$ be a measurable set such that $\leb^2(E)<+\infty$, $P(E)<+\infty$ and $E\in\S_y^*$. By Lemma~\ref{lemma:polyhedral_approx_y-Steiner}, there exists a sequence $(E_k)_{k\in\N}$ of bounded open sets with polyhedral boundary such that
\begin{equation}\label{eq:approx_polyhedral_1}
E_k\subset H, \qquad E_k\in\S_y^*, 
\end{equation}
and, as $k\to+\infty$,
\begin{equation}\label{eq:approx_polyhedral_2}
\chi_{E_k}\to\chi_E\ \text{in}\  L^1, \qquad P(E_k)\to P(E), \qquad P(E_k;H^+)\to P(E;H^+).
\end{equation}
Without loss of generality, we can assume that for every $k\in\N$ the outer unit normal to $E_k$ is never orthogonal to $\mathrm{e}_1$ while keeping~\eqref{eq:approx_polyhedral_1} and~\eqref{eq:approx_polyhedral_2}. Let us set
\begin{equation*}
I_k=\set*{y\in\R : \lambda_{E_k}(y)>0}
\end{equation*}
and $D_k\subset I_k$ the set of those $t\in I_k$ such that $(E_k)^y_t$ is not an interval. Applying \emph{step one} to each $E_k$, we get
\begin{equation}\label{eq:approx_rearrang_ineq}
P(E_k^\bigstar)\le P(E_k),\qquad P(E_k^\bigstar;H^+)\le P(E_k;H^+),
\end{equation}
\begin{equation}\label{eq:approx_rearrang_ineq_quant}
\leb^1(D_k)^2\le\min\set*{\big(P(E_k)-P(E_k^\bigstar)\big)P(E_k),\ \big(P(E_k;H^+)-P(E_k^\bigstar;H^+)\big)P(E_k;H^+)}.
\end{equation}
By Tonelli's theorem, we have
\begin{equation}\label{eq:tonelli}
\leb^2(E_k\bigtriangleup E)=\int_\R\leb^1((E_k)^y_t\bigtriangleup E^y_t)\ dt\ge\int_\R|\lambda_{E_k}(t)-\lambda_{E}(t)|\ dt=\leb^2(E_k^\bigstar\bigtriangleup E^\bigstar).
\end{equation}
In particular, $\chi_{E_k^\bigstar}\to\chi_{E^\bigstar}$ in $L^1$ and, by the lower semicontinuity of the perimeter,
\begin{equation}\label{eq:semicont_per_star_k}
P(E^\bigstar)\le\liminf_{k\to+\infty} P(E_k^\bigstar), \qquad P(E^\bigstar;H^+)\le\liminf_{k\to+\infty} P(E_k^\bigstar;H^+).
\end{equation}
By~\eqref{eq:approx_polyhedral_2} and~\eqref{eq:approx_rearrang_ineq}, from~\eqref{eq:semicont_per_star_k} we deduce~\eqref{eq:th_rearrang}. In addition, from~\eqref{eq:approx_rearrang_ineq_quant}, we get
\begin{equation*}
\limsup_{k\to+\infty}\leb^1(D_k)^2\le\min\set*{\big(P(E)-P(E^\bigstar)\big)P(E),\ \big(P(E;H^+)-P(E^\bigstar;H^+)\big)P(E;H^+)}.
\end{equation*}
In particular, if either $P(E^\bigstar)=P(E)$ or $P(E^\bigstar;H^+)=P(E;H^+)$, then $\chi_{D_k}\to0$ in $L^1(\R)$ as $k\to+\infty$. Since~\eqref{eq:tonelli} implies that $\chi_{(E_k)^y_t}\to\chi_{E^y_t}$ in $L^1(\R)$ for a.e.\ $t\in\R$, as well as $\chi_{I_k}\to\chi_I$ in $L^1(\R)$, we may apply~\cite{M12}*{Propositions~12.15 and~14.5} to find that
\begin{equation*}
\liminf_{k\to+\infty}\chi_{I_k\setminus D_k}(t)P((E_k)^y_t)\ge\chi_I(t)P(E^y_t) \qquad \text{for a.e.}\ t\in\R.
\end{equation*}
Moreover, by Fatou's lemma, we have
\begin{equation*}
\int_I P(E^y_t)\ dt\le\liminf_{k\to+\infty}\int_{I_k\setminus D_k} P((E_k)^y_t)\ dt=2\liminf_{k\to+\infty}\leb^1(I_k\setminus D_k)=2\leb^1(I).
\end{equation*}
By the one-dimensional isoperimetric inequality, $P(E^y_t)\ge2$ for a.e.\ $t\in\R$. Therefore, $P(E^y_t)=2$ for a.e.\ $t\in I$. By~\cite{M12}*{Proposition 12.13}, for every such $t$, $E^y_t$ is equivalent to an interval. Thanks to Lemma~\ref{lemma:only_if_rearrang}, this concludes the proof.
\end{proof}

\subsection{Boundary regularity}

We conclude this section showing that $x$-convex and $y$-Schwarz symmetric sets are bounded and have Lipschitz boundary. 

\begin{proposition}\label{prop:regularity_boundary_rearrang}
Let $E\subset H$ be a measurable set such that $\leb^2(E)<+\infty$, $P(E)<+\infty$ and $E\in\S_y^*$. If $E$ is $x$-convex, then $E$ is bounded and $\de E$ is piecewise Lipschitz. In particular, if $E\subset H$, then $E^\bigstar$ is bounded and $\de E^\bigstar$ is piecewise Lipschitz.
\end{proposition}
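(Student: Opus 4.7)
The plan is to write $E$ in the canonical form provided by $\phi_E$ and $\lambda_E$ and then reduce both boundedness and boundary regularity to monotonicity properties of these two functions. Since $E$ is $x$-convex, every horizontal section $E^y_t$ is an open interval, so the proof of Lemma~\ref{lemma:only_if_rearrang} applies and gives, up to $\leb^2$-negligible modification,
\begin{equation*}
E = \set*{(x,y)\in\R^2 : y \in \dom\phi_E,\ \phi_E(|y|) < x < \phi_E(|y|) + \lambda_E(|y|)},
\end{equation*}
with $\phi_E$ non-decreasing and $\lambda_E$ non-increasing on $]0,+\infty[$ by Lemma~\ref{lemma:def_essinf_slice_profile}. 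This reduces the whole proof to studying two monotone functions of a single real variable.

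Next I would establish boundedness via the standard one-directional slicing identity $|D_\nu \chi_E|(\R^2) = \int_\R P(E^\nu_t)\,dt$ for $\nu \in \set{\mathrm e_1,\mathrm e_2}$, combined with $P(E) \ge |D_\nu \chi_E|(\R^2)$. Since every non-empty horizontal or vertical section of $E$ is an open interval, its $1$-dimensional perimeter equals $2$, so
\begin{equation*}
P(E) \ge 2\,\leb^1(\dom\phi_E)
\qquad\text{and}\qquad
P(E) \ge 2\,\leb^1(\pi_x(E)).
\end{equation*}
Therefore $\dom\phi_E$ is a bounded symmetric interval $[-y_{\max},y_{\max}]$ and $\pi_x(E) = E^y_0$ is a bounded interval in $[0,+\infty[$, hence $E$ is contained in a bounded rectangle.

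For the piecewise Lipschitz claim I would decompose $\de E$ into finitely many pieces: the two monotone curves $\set{x = \phi_E(|y|)}$ and $\set{x = \phi_E(|y|) + \lambda_E(|y|)}$ for $|y|\le y_{\max}$, together with, when non-trivial, the horizontal segments at $y = \pm y_{\max}$ and the vertical segment on $\set{x = 0}$ arising from $\set{y : \phi_E(|y|) = 0}$. The key observation is that a bounded monotone graph, completed by vertical segments at its jumps, is a Lipschitz arc: in rotated coordinates $(p,q) = ((x+y)/\sqrt{2},(y-x)/\sqrt{2})$, the portion of the left curve in $\set{y\ge 0}$ becomes the graph of a function $q(p)$ whose slope equals $(1-\phi_E')/(1+\phi_E') \in [-1,1]$ on the absolutely continuous part and equals $-1$ on each vertical jump segment; the right curve is Lipschitz in the complementary rotation, and the bounded-variation bound forces finite length. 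Hence $\de E$ is a finite union of Lipschitz arcs.

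The "in particular" clause then follows by transferring the hypotheses to $E^\bigstar$: Theorem~\ref{th:rearrang} gives $E^\bigstar \in \S_y^*$ with $\leb^2(E^\bigstar) = \leb^2(E)$ and $P(E^\bigstar) \le P(E) < +\infty$; by~\eqref{eq:def_rearrangement} each horizontal section of $E^\bigstar$ is an open interval, so $E^\bigstar$ is $x$-convex; and $\tau_E \le \phi_E$ forces $E^\bigstar \subset H$, so the first part of the proposition applies to $E^\bigstar$ directly. The main technical step to carry out carefully is the Lipschitz property of a monotone graph with possibly countably many jumps: the rotated-coordinate trick is what controls the slope uniformly across both the continuous and the vertical-jump regions, avoiding a case analysis at every discontinuity of $\phi_E$ or $\lambda_E$.
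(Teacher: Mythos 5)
Your argument is correct and rests on the same two mechanisms as the paper's proof: a projection bound on the perimeter for boundedness, and the ``monotone graph rotated by $45$ degrees is Lipschitz'' trick for the boundary. The packaging differs in the second half. The paper writes $E=(G\setminus F)\cap H$ for two auxiliary filled-in sets, reflects $F$ to make it doubly Schwarz symmetric, and then defers to the argument of~\cite{MM04}*{Theorem~3.1}, which yields the four Lipschitz arcs; you instead decompose $\de E$ directly into the left arc $\set{x=\phi_E(|y|)}$, the right arc $\set{x=\phi_E(|y|)+\lambda_E(|y|)}$ and the residual straight segments, and carry out the rotation explicitly on each completed monotone graph. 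Your version is more self-contained and avoids the auxiliary sets, at the cost of having to check by hand that the list of pieces exhausts $\de E$; the paper's version outsources that bookkeeping to the quoted symmetric-set argument. For boundedness you use the slicing identity $P(E)\ge\int_\R P(E^y_t)\,dt$ in both directions together with $\pi_x(E)=E^y_0$ (valid because the vertical sections are symmetric open intervals, hence all contain $0$), whereas the paper invokes \cite{CCDPM14}*{Proposition~3.2} twice on the two graph representations of $E$; these are the same estimate. One point you should make explicit: the monotonicity of the \emph{right} endpoint function $\phi_E+\lambda_E$ does not follow from Lemma~\ref{lemma:def_essinf_slice_profile} alone (a non-decreasing plus a non-increasing function need not be monotone), but from the nestedness $E^y_t\subset E^y_s$ for $0<s<t$ combined with $x$-convexity, which gives $\phi_E(t)+\lambda_E(t)\le\phi_E(s)+\lambda_E(s)$ exactly as in the proof of Lemma~\ref{lemma:only_if_rearrang}; since you already invoke that proof for the canonical form, this is a one-line addition rather than a gap.
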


\begin{proof}
Let $\lambda_E,\phi_E\colon\R\to[0,+\infty]$ be as in Lemma~\ref{lemma:def_essinf_slice_profile}. Since $E$ is $x$-convex, up to $\leb^2$-negligible sets we can write
\begin{equation}\label{eq:write_E}
E=\set*{(x,y)\in\R^2 : y\in\dom\phi_E,\ 0\le x-\phi_E(y)\le\lambda_E(y)},
\end{equation}
see~\cite{M12}*{Lemma~14.6}. Since $E\in\S^*_y$, there exists $b\in[0,+\infty]$ such that $\dom\phi_E=]-b,b[$ up to $\leb^1$-negligible sets, so that $E\subset\R\times]-b,b[$ up to $\leb^2$-negligible sets. Thus, by~\eqref{eq:write_E} and~\cite{CCDPM14}*{Proposition~3.2}, we get $4b\le P(E)$, so that $b\le \frac{1}{4}P(E)<+\infty$. Observe that,  since $E\in\S^*_y$, we have that $0\in\dom\phi_E$. Hence, Lemma~\ref{lemma:def_essinf_slice_profile} yields $E\subset[\phi_E(0),\phi_E(0)+\lambda_E(0)]\times\R$ up to $\leb^2$-negligible sets. Moreover, up to $\leb^2$-negligible sets, we can write
\begin{equation}\label{eq:write_E_2nd}
E=\set*{(x,y)\in\R^2 : |y|\le f(x),\ x\in[\phi_E(0),\phi_E(0)+\lambda_E(0)]}
\end{equation}  
for some non-negative $f\in L^1([\phi_E(0),\phi_E(0)+\lambda_E(0)])$. Thus, again by~\cite{CCDPM14}*{Proposition~3.2}, we get $2\lambda_E(0)\le P(E)$, so that $\lambda_E(0)\le \frac{1}{2}P(E)<+\infty$. This proves that $E$ is bounded. 

Now, let us define 
\begin{align*}
F&=\set*{(x,y)\in\R^2 : y\in\dom\phi_E,\ 0\le |x|\le\phi_E(y)},\\
G&=\set*{(x,y)\in\R^2 : y\in\dom\phi_E,\ 0\le |x|\le\phi_E(y)+\lambda_E(y)}.
\end{align*}
We have $F\subset G$ and $E=(G\setminus F)\cap H$. By the properties of $\phi_E$ and $\lambda_E$, we have that $G\in\S^*_x\cap\S^*_y$ and $F\in\S_x^*\cap\S_y$. Let us set
\begin{equation*}
\tilde{F}=\set*{(x,y)\in\R^2 : (|x|,b-|y|)\in F}.
\end{equation*}  
Then $\tilde{F}\in\S_x^*\cap\S_y^*$. Arguing as in the proof of~\cite{MM04}*{Theorem~3.1} (see also~\cite{FM16}*{Section~5.1}), we conclude that $\de G$ and $\de\tilde{F}$ are both union of the image of four Lipschitz curves. As a consequence, $\de F$ is piecewise Lipschitz. This proves that $\de E$ is piecewise Lipschitz and the proof is complete.
\end{proof}

\section{The double bubble problem for vertical interfaces}
\label{sec:vertical}

%

\subsection{Existence of minimizers with vertical interface}
\label{sec:existence_vertical}

\subsubsection{Reduction to more symmetric sets}

In this section we prove the existence of solutions to problem~\eqref{eq:minpart}. The following result restricts the class of admissible sets to more symmetric ones. For the notation we refer to Section \ref{ss:symmetry}

\begin{theorem}[Reduction]\label{th:reduction}
Let $v>0$ and let $E\in\A^x(v)$ with $\P_\alpha^x(E)<+\infty$. There exists $\tilde{E}\in\A^x(v)\cap\S_x\cap\S_y^*$, bounded and $x$-transformed-convex, such that $\P_\alpha^x(\tilde{E})\le\P_\alpha^x(E)$. Moreover, in the case $\alpha>0$, $E\in\S_x$ is equivalent to a $x$-transformed-convex set if and only if $\P_\alpha^x(\tilde{E})=\P_\alpha^x(E)$. In addition, there exist $r\in]0,+\infty[$ and $f\in C([0,r])\cap\Lip_{loc}(]0,r[)$ such that
\begin{equation}\label{eq:reduction_profile}
\tilde{E}=\set*{(x,y)\in\R^2 : |y|\le f(|x|),\ |x|\le r}.
\end{equation}
\end{theorem}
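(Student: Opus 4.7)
The plan is to produce $\tilde E$ by three successive reductions, each non-increasing for the functional $\P_\alpha^x$, and then read off the profile form from the rearrangement.

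First I would reduce to $\S_x$ via a ``reflect the better half'' argument: for $\sigma(x,y)=(-x,y)$, the competitors $E_+ = E^{+x} \cup \sigma(E^{+x})$ and $E_- = \sigma(E^{-x}) \cup E^{-x}$ both lie in $\A^x(v) \cap \S_x$, and expanding the three perimeters in $\P_\alpha^x$ in terms of the traces $\gamma_\pm$ of $E^{\pm x}$ on the $y$-axis and using the elementary bound $\haus^1(\gamma_+) + \haus^1(\gamma_-) \le 2\,\haus^1(\gamma_+ \cup \gamma_-)$ gives $\min(\P_\alpha^x(E_+), \P_\alpha^x(E_-)) \le \P_\alpha^x(E)$. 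Next I would apply Steiner symmetrization in the $y$ direction: since the $\alpha$-density $\sqrt{N_1^2 + |x|^{2\alpha}N_2^2}$ is $y$-independent and vertical translations are a symmetry of $P_\alpha$, the classical slice-wise convexity argument (convexity of $s \mapsto \sqrt{a^2+s^2}$) shows that both $P_\alpha(E)$ and $P_\alpha(E^{\pm x})$ decrease, while $\leb^2$ and $x$-symmetry are preserved. After these two steps one may assume $E \in \A^x(v) \cap \S_x \cap \S_y^*$.

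The third reduction uses Theorem~\ref{th:rearrang} in the transformed plane. Via $\Psi$, the set $F = \Psi(E)$ lies in $\S_x \cap \S_y^*$, the Grushin perimeters become Euclidean ones, and $\M_\alpha(F^{\pm x}) = v$. Applying Theorem~\ref{th:rearrang} to $F^{+x} \subset H$ and then reflecting across $\set{x=0}$ produces $\tilde F = (F^{+x})^\bigstar \cup \sigma((F^{+x})^\bigstar)$ in $\S_x \cap \S_y^*$ with each of the three Euclidean perimeters bounded by the corresponding piece of $F$. The $\M_\alpha$-mass of each half is preserved by viewing the $\bigstar$ rearrangement equivalently as a $y$-slice-wise horizontal rearrangement back in the Grushin plane, which preserves $\leb^2$ of each slice. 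The analysis of the translating function $\tau_E$ in the proof of Theorem~\ref{th:rearrang} further shows that the left endpoint $\phi_E - \tau_E$ and the right endpoint $\phi_E - \tau_E + \lambda_E$ of the rearranged slices are respectively non-decreasing and non-increasing in $|y|$; this monotonicity, read in the transformed plane, is exactly the $x$-transformed-convexity of the resulting $\tilde E$.

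To conclude, Proposition~\ref{prop:regularity_boundary_rearrang} yields that $\tilde E$ is bounded with piecewise Lipschitz boundary, and writing out the $y$-Schwarz symmetric parametrization gives the profile form $\set{(x,y) : |y| \le f(|x|),\ |x| \le r}$ with $f \in C([0,r]) \cap \Lip_{loc}(]0,r[)$. For the equality statement in the case $\alpha>0$, the only nontrivial inequality in the chain is the $\bigstar$ one, which by Theorem~\ref{th:rearrang} is an equality if and only if the $y$-slices of $F^{+x}$ are equivalent to intervals, i.e., $\Psi(E^{+x})$ is $x$-convex, which combined with the $y$-Schwarz symmetry is the $x$-transformed-convexity of $E$. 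The hardest part will be making the third reduction fully rigorous, namely reconciling the Euclidean-volume-preserving nature of $\bigstar$ with the $\M_\alpha$-volume constraint: this forces one to interpret the rearrangement slice-by-slice in the Grushin plane, where the correct volume $\leb^2$ is preserved by construction, while still invoking Theorem~\ref{th:rearrang} in the transformed plane to control the Euclidean perimeters.
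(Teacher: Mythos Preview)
Your first two reductions (reflect the better half; vertical Steiner symmetrization) match the paper and are fine. The third reduction has two genuine gaps.

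First, the $\bigstar$ rearrangement does \emph{not} produce an $x$-transformed-convex set. You argue that the left endpoint $\phi_E-\tau_E$ is non-decreasing in $|y|$ and the right endpoint $\phi_E-\tau_E+\lambda_E$ is non-increasing, and then assert that this monotonicity ``is exactly the $x$-transformed-convexity.'' It is not: a set in $H$ of the form $\{\tilde u(|\eta|)\le\xi\le\tilde v(|\eta|)\}$ with $\tilde u$ non-decreasing and $\tilde v$ non-increasing is $\xi$-convex and $\eta$-convex, but need not be convex (take e.g.\ $\tilde u(\eta)=\sqrt{\eta}$). The paper therefore follows the $\bigstar$ step with a \emph{convex hull} step in the transformed plane (the convex hull only decreases Euclidean perimeter and enlarges the set, hence increases $\M_\alpha$), and then a further horizontal translation to bring the set down to the axis $\{\xi=0\}$, which is what actually yields the profile form $|y|\le f(|x|)$. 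Without these steps you obtain neither the claimed convexity nor, in general, the claim that the set touches the $y$-axis.

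Second, your handling of the volume constraint cannot work as stated. The $\bigstar$ rearrangement in the transformed plane preserves Lebesgue measure slice-by-slice, hence preserves $\leb^2$ but \emph{not} $\M_\alpha$; since it pushes each slice toward $\xi=0$ where the weight $|(\alpha+1)\xi|^{-\alpha/(\alpha+1)}$ is larger, one has $\M_\alpha((F^{+x})^\bigstar)\ge\M_\alpha(F^{+x})$, with strict inequality in general. Your proposed fix---reinterpret $\bigstar$ slice-wise in the Grushin plane so that $\leb^2$ is preserved---destroys the perimeter control, because the pulled-back operation is no longer the Euclidean $\bigstar$ to which Theorem~\ref{th:rearrang} applies. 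The paper's resolution is to accept the $\M_\alpha$-increase through the $\bigstar$, convex hull, and translation steps, and then apply a final anisotropic dilation $\delta_\lambda$ with $\lambda\in(0,1]$ to restore the volume; this only further decreases $\P_\alpha^x$. The equality analysis then runs through all of these steps, and for $\alpha>0$ the strictness of the dilation inequality when $\lambda<1$ is what forces equality to propagate back to $x$-transformed-convexity of $E$.
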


\begin{proof}
We split the proof in several steps. To avoid heavy notation, during the proof we will omit the $x$-superscript for the sets $E^{\pm x}$ defined in~\eqref{eq:vertical_pm}. 

\medskip

\textit{Step~1: $x$-symmetrization}. It is not restrictive to assume that $P_\alpha(E^+)\le P_\alpha(E^-)$ (otherwise, we can reflect $E$ with respect to the $y$-axis). We thus define
\begin{equation*}
E^1=\set*{(x,y)\in\R^2: (|x|,y)\in E^+}.
\end{equation*}
Clearly, $E^1\in\A^x(v)$ is $x$-symmetric. We claim that $\P_\alpha^x(E^1)\le\P_\alpha^x(E)$. Indeed, this is equivalent to prove that $\P^\xi(F^1)\le\P^\xi(F)$, where $F=\Psi(E)$, $F^1=\Psi(E^1)$ and~$\Psi$ is as in~\eqref{eq:def_Phi_Psi}. By De Giorgi's Structure Theorem (see~\cite{M12}*{Theorem~15.9} for instance), the perimeter measure of~$F$ is given by $\mu_F=\haus^1\mres\de^*F$, where $\de^*F$ is the reduced boundary of~$F$. We thus have
\begin{align*}
\P^\xi(F)&=\frac{1}{2}\big(P(F)+P(F^+)+P(F^-)\big)\\
&=P(F^+)+P(F^-)-\min\set*{\haus^1\big(\de^*(F^+)\cap\set{\xi=0}\big),\haus^1\big(\de^*(F^-)\cap\set{\xi=0}\big)}\\
&\ge 2P(F^+)-\haus^1\big(\de^*(F^+)\cap\set{\xi=0}\big)=\P^\xi(F^1).
\end{align*}

\medskip

\textit{Step~2: vertical Steiner symmetrization}. The set $F^1=\Psi(E^1)$ is $\xi$-symmetric and satisfies $F^{1,+}=\Psi(E^{1,+})$. By~\eqref{eq:prop_Phi_Psi}, we have
\begin{equation*}
\begin{split}
&P(F^1)=P_\alpha(E^1),
\hspace*{1.5cm}
\M_\alpha(F^1)=\leb^2(E^1), \\
&P(F^{1,+})=P_\alpha(E^{1,+}),
\qquad
\M_\alpha(F^{1,+})=\leb^2(E^{1,+}).
\end{split}
\end{equation*}
Let $F^2$ be the Steiner symmetrization of $F^1$ in the $\eta$-direction, {\em i.e.},
\begin{equation*}
F^2=\set*{(t,\eta)\in\R^2 : |\eta|<\frac{\leb^1((F^1)^\xi_t)}{2}},
\end{equation*}
with the notation \eqref{eq:sections_notation}. $F^{2,+}$ is the Steiner symmetrization of $F^{1,+}$ in the $\eta$-direction. By~\cite{CCF05}*{Theorem~1.1} (see also~\cite{CCDPM14}*{Theorem~A} for a more general statement), we have $P(F^2)\le P(F^1)$ and $P(F^{2,+})\le P(F^{1^+})$, so that $\P^\xi(F^2)\le\P^\xi(F^1)$. Moreover, if $\P^\xi(F^2)<\P^\xi(F^1)$, then $F^1$ (or, equivalently, $F^{1,+}$) is not equivalent to an $\eta$-convex set. In addition, since $\leb^1((F^1)^\xi_t)=\leb^1((F^2)^\xi_t)$ for all $t\in\R$, by Tonelli's theorem we get
\begin{align*}
\M_\alpha(F^{1,+})&=\int_{F^{1,+}} |(\alpha+1)t|^{-\frac{\alpha}{\alpha+1}}\ dt d\eta
 =\int_0^{+\infty} |(\alpha+1)t|^{-\frac{\alpha}{\alpha+1}}\,\leb^1((F^{1,+})^\xi_t)\ dt\\
&=\int_{-\infty}^{+\infty} |(\alpha+1)\xi|^{-\frac{\alpha}{\alpha+1}}\,\leb^1((F^{2,+})^\xi_t)\ dt
=\M_\alpha(F^{2,+}).
\end{align*} 
Letting $E^2=\Phi(F^2)$, where~$\Phi$ is as in~\eqref{eq:def_Phi_Psi}, we get that $E^2\in\A^x(v)\cap\S_x\cap\S^*_y$ satisfies $\P_\alpha^x(E^2)\le\P_\alpha^x(E^1)$, with strict inequality if $E^1$ is not equivalent to a $y$-convex set.

\medskip

\textit{Step~3: horizontal rearrangement}. Let $F^2=\Psi(E^2)$. Note that $F^{2,+}\subset\set*{\xi\ge0}$ is such that $\M_\alpha(F^{2,+})<+\infty$, $P(F^{2,+})<+\infty$ and $F^{2,+}\in\S_\eta^*$. We claim that $\leb^2(F_2)<+\infty$. Indeed, let $G$ be the Steiner symmetrization of $F^2$ in the $\xi$-direction, {\em i.e.,}
\begin{equation*}
G^1=\set*{(\xi,t)\in\R^2 : |\xi|<\frac{\leb^1((F^{2,+})^\eta_t)}{2}},
\end{equation*}
with the notation~\eqref{eq:sections_notation}.
Arguing as in the proof of~\cite{MM04}*{Theorem~3.1}, we have $\M_\alpha(G^1)\ge\M_\alpha(F^{2,+})$. By~\cite{CCF05}*{Theorem~1.1}, we have $P(G^1)\le P(F^{2,+})$. In addition, by Tonelli's theorem, we also have $\leb^2(G^1)=\leb^2(F^{2,+})$. Now, if $P(G^1)=P(F^{2,+})$, then either $\leb^2(F^{2,+})<+\infty$ (and we have nothing to prove) or $F^{2,+}$ is equivalent to~$\R^2$. But if $F^{2,+}$ is equivalent to~$\R^2$, then $\M_\alpha(F^{2,+})=+\infty$, which is a contradiction. So, assume that $P(G^1)<P(F^{2,+})$. The set~$G^1$ is equivalent to an open $\xi$-Schwarz and $\eta$-Schwarz symmetric set. Again, arguing as in the proof of~\cite{MM04}*{Theorem~3.1}, its convex hull $G^2=\co(G^1)$ is open and satisfies $\M_\alpha(G^2)\ge\M_\alpha(G^1)$, $\leb^2(G^2)\ge\leb^2(G^1)$ and $P(G^2)\le P(G^1)$. Since any open convex set with finite perimeter is bounded, the set $G^2$ is bounded and thus $\leb^2(G^2)<+\infty$, which immediately implies that $\leb^2(F^{2,+})<+\infty$ as claimed.

We can thus apply Theorem~\ref{th:rearrang} to the set $F^{2,+}$. We define
\begin{equation*}
F^3=\set*{(\xi,\eta)\in\R^2 : (\xi,|\eta|)\in F^{3,+}}
\end{equation*}
where $F^{3,+}=(F^{2,+})^\bigstar$. Then $F^{3,+}\in\S_\eta^*$ is $\xi$-convex, $\leb^2(F^{3,+})=\leb^2(F^{2,+})$ and
\begin{equation*}
\min\set*{P(F^{2,+})-P(F^{3,+}),P(F^{2,+};\set{\xi>0})-P(F^{3,+};\set{\xi>0})}\ge0,
\end{equation*}
with equality if and only if $F^{2,+}$ is equivalent to a $\xi$-convex set. Therefore $\P^\xi(F^3)\le\P^\xi(F^2)$
with equality if and only if $F^{2,+}$ is equivalent to a $\xi$-convex set. We claim that $\M_\alpha(F^{3,+})\ge\M_\alpha(F^{2,+})$. Indeed, by the definition of the horizontal rearrangement in~\eqref{eq:def_rearrangement}, we have that
\begin{equation}\label{eq:horizontal_rearrang_sections}
0\le\ess\inf(F^{3,+})^\eta_t\le\ess\inf(F^{2,+})^\eta_t, 
\qquad
\leb^2((F^{3,+})^\eta_t)=\leb^2((F^{2,+})^\eta_t),
\end{equation}
for a.e.\ $t\in\R$. Now let $A\subset[0,+\infty[$ be a $\leb^1$-measurable set with finite measure and let $B=]a,a+\leb^1(A)[$ for some $0\le a\le\ess\inf A$. Since the function $\xi\mapsto\xi^\beta$, with $\beta=-\frac{\alpha}{\alpha+1}$ and $\xi\ge0$, is decreasing, we have
\begin{equation}\label{eq:falcone}
\int_A\xi^\beta\ d\xi\le\int_B\xi^\beta\ d\xi.
\end{equation}
Indeed, note that inequality~\eqref{eq:falcone} is immediate if $A$ is an interval, since in this case $B=A-a$. Thus we can directly assume that $a=\ess\inf A$. Note that inequality~\eqref{eq:falcone} follows if we prove that $\int_{A\setminus B}\xi^\beta\ d\xi\le\int_{B\setminus A}\xi^\beta\ d\xi$, since
\begin{equation*}
\int_A\xi^\beta\ d\xi=\int_{A\cap B}\xi^\beta\ d\xi+\int_{A\setminus B}\xi^\beta\ d\xi\le\int_{A\cap B}\xi^\beta\ d\xi+\int_{B\setminus A}\xi^\beta\ d\xi=\int_B\xi^\beta\ d\xi.
\end{equation*}
On the other hand, since $\leb^1(A)=\leb^1(B)$ and $\ess\inf (A\setminus B)\ge\ess\sup (B\setminus A)$, we have that
\begin{align*}
\int_{A\setminus B}\xi^\beta\ d\xi\le\leb^1(A\setminus B)\,(\ess\inf (A\setminus B))^\beta\le\leb^1(B\setminus A)\,(\ess\sup (B\setminus A))^\beta\le\int_{B\setminus A}\xi^\beta\ d\xi.
\end{align*}   
Thus, by Tonelli's theorem and~\eqref{eq:horizontal_rearrang_sections}, we conclude that $\M_\alpha(F^{3,+})\ge\M_\alpha(F^{2,+})$ as claimed. In conclusion, setting $E^3=\Phi(F^3)$, we have that $E^3\in\A^x(v)\cap\S_x\cap\S_y^*$ is such that $E^{3,+}$ is $x$-convex and satisfies $\leb^2(E^3)\ge\leb^2(E^2)$ and $\P_\alpha^x(E^3)\le\P_\alpha^x(E^2)$, with strict inequality if and only if $E^{2,+}$ is not equivalent to a $x$-convex set. 

\medskip

\textit{Step~4: convexification and regularization}. Let $F^3=\Psi(E^3)$ as in the previous step. Since $F^{3,+}\in\S_y^*$ is $\xi$-convex, has finite $\leb^2$-measure and finite perimeter, by Proposition~\ref{prop:regularity_boundary_rearrang}, $F^{3,+}$ is bounded and $\de F^{3,+}$ is piecewise Lipschitz. Arguing as in the proof of~\cite{MM04}*{Theorem~3.1}, the convex hull $F^{4,+}=\co(F^{3,+})$ is $\xi$-convex, $y$-Schwarz symmetric and satisfies
\begin{equation}\label{eq:convexification}
P(F^{4,+})\le P(F^{3,+}), 
\qquad 
P(F^{4,+};\set*{\xi>0})\le P(F^{3,+};\set*{\xi>0})
\end{equation}
and, since $F^{3,+}\subset F^{4,+}$,
\begin{equation*}
\M_\alpha(F^{4,+})\ge\M_\alpha(F^{3,+}).
\end{equation*} 
Note that the inequalities in~\eqref{eq:convexification} are strict if $F^{3,+}$ is not equivalent to a convex set. Let us set
\begin{equation*}
F^4=\set*{(\xi,\eta)\in\R^2 : (|\xi|,\eta)\in F^{4,+}}
\end{equation*}
and $E^4=\Phi(F^4)$. Then $E^4\in\S_x\cap\S_y^*$ is $x$-transformed-convex and satisfies 
\begin{equation*}
\P_\alpha^x(E^4)\le\P_\alpha^x(E^3),
\qquad
\leb^2(E^4)\ge\leb^2(E^3).
\end{equation*}
Moreover, since $F^{4,+}$ is convex, there exist $0\le a<b<+\infty$ and $\phi\in C([a,b])\cap\Lip_{loc}(]a,b[)$ such that
\begin{equation*}
F^4=\set*{(\xi,\eta)\in\R^2 : |\eta|\le \phi(|\xi|),\ a\le|\xi|\le b}.
\end{equation*}
We claim that the set
\begin{gather*}
F^5=\set*{(\xi,\eta)\in\R^2 : |\eta|\le \phi(|\xi|+a),\ |\xi|\le b-a}
\end{gather*}
satisfies $\P^\xi(F^5)\le\P^\xi(F^4)$, with strict inequality if ${\min\{a,\varphi(a)\}}>0$. To prove the claim, recall that $\P^\xi(F^i)=\frac{1}{2}(P(F^{i,+})+P(F^{i,-})+P(F^i)))$ for $i=4,5$ and notice that $F^5=F^{5,+}\cup F^{5,-}$, where $F^{5,+}=F^{4,+}-(a,0)$, $F^{5,-}=F^{4,-}+(a,0)$. Since $P$ is invariant under translations, we have that
\begin{equation}
\label{eq:trans1}
P(F^{5,+})=P(F^{4,+}),\qquad P(F^{5,-})=P(F^{4,-}).
\end{equation}
Moreover, we have 
\begin{equation*}
P(F^5)=2P(F^{5,+};\{\xi>0\})=2P(F^{4,+};\{\xi>a\}).
\end{equation*}
If $a=0$, the latter is equal to $P(F^4)$. On the other hand, if $a>0$, there holds
\begin{equation}
\label{eq:trans2}
\begin{split}
P(F^4)&=2\left[P(F^{4,+};\{\xi>a\})+2\mathcal H^1(F^{4,+}\cap\{\xi=a\})\right]\\
&=2\left[P(F^{4,+};\{\xi>a\})+2\varphi(a)\right]\\
&\ge 2P(F^{4,+};\{\xi>a\})=P(F^5),
\end{split}
\end{equation}
where the last inequality is strict if also $\varphi(a)>0$. By \eqref{eq:trans1} and  \eqref{eq:trans2} the claim follows.
By Tonelli's theorem, it is easy to see that $\M_\alpha(F^5)\ge\M_\alpha(F^4)$, with strict inequality if $a>0$. We set $E^5=\Psi(F^5)$. Note that $E^5\in\S_x\cap\S_y^*$ is such that $E^{5,+}$ is $x$-convex, $x$-transformed-convex and of the form~\eqref{eq:reduction_profile}. Moreover it satisfies $\leb^2(E^5)\ge\leb^2(E^4)$, $\P_\alpha^x(E^5)\le\P_\alpha^x(E^4)$, with strict inequalities if $E^4$ was not already of the form~\eqref{eq:reduction_profile}, see also \cite{F17}*{Lemma 2.2, Step 1} for an alternative proof.   

\medskip

\textit{Step~5: scaling and conclusion}. We can now set $\tilde{E}=\delta_\lambda(E^5)$ with $\lambda>0$ such that $\leb^2(\tilde{E})=\leb^2(E)$. In fact, we have
\begin{equation*}
\lambda^{-d}\leb^2(\tilde{E})=\leb^2(E^5)=\leb^2(E^4)\ge\leb^2(E^3)\ge\leb^2(E^2)=\leb^2(E^1)=\leb^2(E) 
\end{equation*}
and
\begin{equation*}
\lambda^{1-d}\P_\alpha^x(\tilde{E})\le\P_\alpha^x(E^5)\le\P_\alpha^x(E^4)\le\P_\alpha^x(E^3)\le\P_\alpha^x(E^2)\le\P_\alpha^x(E^1)\le\P_\alpha^x(E).
\end{equation*}
Therefore, we must have $\lambda\in(0,1]$, with $\lambda<1$ if $E\in\S_x$ is not equivalent to a $x$-transformed-convex set in the case $\alpha>0$. This concludes the proof.
\end{proof}

\subsubsection{Existence of minimal double bubbles with vertical interface}

We are now ready to prove the existence of minimizers to problem~\eqref{eq:minpart}.

\begin{theorem}[Existence of minimizers with vertical interface]\label{th:existence}
Let $\alpha\ge0$ and fix $v>0$. There exists a solution $E^*\in\A^x(v)\cap\S_x\cap\S_y^*$, bounded and $x$-transformed-convex, to the minimal partition problem~\eqref{eq:minpart}. Moreover, there exist $r\in]0,+\infty[$ and $f\in C([0,r])\cap\Lip_{loc}(]0,r[)$ such that
\begin{equation*}
E^*=\set*{(x,y)\in\R^2 : |y|\le f(|x|),\ |x|\le r}.
\end{equation*}
\end{theorem}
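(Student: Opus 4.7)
The plan is to apply the direct method of the calculus of variations, using Theorem~\ref{th:reduction} to work in a class with very strong structural constraints. First, I take any minimizing sequence $(E_n)\subset\A^x(v)$; the infimum is finite (e.g.\ by testing with a suitable dilation of the $x$-symmetric Grushin isoperimetric set), so $\P_\alpha^x(E_n)\le C$ uniformly. Applying Theorem~\ref{th:reduction} termwise, I replace each $E_n$ by some $\tilde E_n\in\A^x(v)\cap\S_x\cap\S_y^*$, bounded and $x$-transformed-convex, with $\P_\alpha^x(\tilde E_n)\le\P_\alpha^x(E_n)$. Inspecting Step~5 of the proof of Theorem~\ref{th:reduction}, one may further assume that $\Psi(\tilde E_n^{+x})$ touches $\set*{\xi=0}$. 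From here on I work with $(\tilde E_n)$, which encodes all the symmetry and convexity I need for both compactness and the shape of the limit.

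Next, I establish compactness in the transformed plane. Let $F_n=\Psi(\tilde E_n)$: then $F_n^+=\Psi(\tilde E_n^{+x})$ is convex with $P(F_n^+)\le 2\P_\alpha^x(\tilde E_n)\le 2C$, and by construction its leftmost points lie on $\set*{\xi=0}$. Being convex with finite perimeter, $F_n^+$ is bounded; the perimeter-to-diameter estimates in the proof of Proposition~\ref{prop:regularity_boundary_rearrang} (namely $b\le P(F_n^+)/4$ and $\lambda_{F_n^+}(0)\le P(F_n^+)/2$) yield $F_n^+\subset[0,R]\times[-b,b]$ with $R,b$ depending only on $C$. By $\xi$-symmetry the whole $F_n$ sits in a common bounded rectangle, so standard $BV$ compactness extracts a subsequence with $\chi_{F_n}\to\chi_{F^*}$ in $L^1(\R^2)$ for some $F^*\subset\R^2$. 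The Grushin density $|(\alpha+1)\xi|^{-\alpha/(\alpha+1)}$ is integrable on that rectangle because the exponent lies in $[0,1)$, so $L^1$-convergence transfers the volume constraint to $\M_\alpha(F^{*,\pm})=v$; i.e.\ $E^*:=\Phi(F^*)\in\A^x(v)$. The symmetries $\S_x$ and $\S_y^*$ and the convexity of $F^{*,+}$ all pass to $L^1$-limits, and lower semicontinuity of the Euclidean perimeter yields $\P_\alpha^x(E^*)\le\liminf\P_\alpha^x(\tilde E_n)=\inf$, so $E^*$ is a minimizer with all the required properties.

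Finally, I read off the shape. The convex, bounded, $\eta$-symmetric set $F^{*,+}$ touches $\set*{\xi=0}$, so it admits a representation
\[
F^{*,+}=\set*{(\xi,\eta)\in\R^2:0\le\xi\le b,\ |\eta|\le\psi(\xi)}
\]
for some concave continuous $\psi\colon[0,b]\to[0,+\infty[$, which is therefore locally Lipschitz on $]0,b[$. Pulling back through $\Phi$ and using the $x$-symmetry of $E^*$ give the claimed form with $r=((\alpha+1)b)^{1/(\alpha+1)}$ and $f(x)=\psi\bigl(x^{\alpha+1}/(\alpha+1)\bigr)$. The hard step is the compactness one: without the $x$-transformed-convexity provided by Theorem~\ref{th:reduction}, a minimizing sequence could spread along the $\xi$-axis (where the Grushin density is light) or concentrate near $\xi=0$ and transfer mass onto a set of $\M_\alpha$-measure zero under weak $L^1$-convergence; convexity in the transformed plane is precisely what rules out both failure modes and gives the uniform $L^\infty$ control that makes the direct method work.
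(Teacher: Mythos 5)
Your proposal is correct and follows essentially the same route as the paper: reduce to the symmetric, $x$-transformed-convex class via Theorem~\ref{th:reduction}, trap the transformed sets in a fixed rectangle, extract a $BV$-compact subsequence in the transformed plane, and conclude by lower semicontinuity of the Euclidean perimeter together with the fact that the volume constraint and all structural properties pass to the $L^1$-limit. The only cosmetic difference is that you obtain the uniform bounding rectangle from convexity and perimeter--diameter estimates in the transformed plane (after normalizing the sets to touch $\set*{\xi=0}$), whereas the paper bounds the horizontal extent directly in the Grushin plane via the representation formula $P_\alpha(E^{+x})\ge 2\int_0^{r_E}x^\alpha\,dx$.
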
 

\begin{proof}
We study the existence of solutions by the direct method of the calculus of variation. By Theorem~\ref{th:reduction}, the class of admissible sets can be restricted to
\begin{equation*}
\mathcal{B}^x(v)=\set*{E\in\A^x(v) : E\in\S_x\cap\S_y^*\ \text{bounded, $x$-transformed-convex, as in~\eqref{eq:reduction_profile}}}.
\end{equation*}
By the isoperimetric inequality~\eqref{eq:isop_ineq}, for any $E\in\mathcal{B}^x(v)$ we have that $\P_\alpha^x(E)\le k$ for some constant $k=k(\alpha,v)\in]0,+\infty[$ depending only on~$\alpha\ge0$ and~$v>0$. 

We claim that any $E\in\mathcal{B}^x(v)$ is contained in the rectangle $[-a,a]\times[-b,b]$ for some $a,b\in]0,+\infty[$ depending only on $\alpha,v>0$. Fix $E\in\mathcal{B}^x(v)$. By Theorem~\ref{th:reduction}, there exist $r_E\in]0,+\infty[$ and $f_E\in C([0,r_E])\cap\Lip_{loc}(]0,r_E[)$ such that 
\begin{equation*}
E=\set*{(x,y)\in\R^2 : |y|\le f_E(|x|),\ |x|\le r_E}.
\end{equation*}
Thus, by the representation formula~\eqref{eq:perim_from_x_to_y}, we get
\begin{equation*}
k\ge P_\alpha(E^{+x})
\ge 2\int_0^{r_E}\sqrt{x^{2\alpha}+f_E'(x)^2}\ dx
\ge 2\int_0^{r_E} x^{\alpha}\ dx
=2\,\frac{r_E^{\alpha+1}}{\alpha+1}
\end{equation*}
and thus $2a^{\alpha+1}\le(\alpha+1)k$. In particular, the convex set $F^{+\xi}=\Phi(E^{+x})$ is bounded and contained in $[0,\bar{a}]\times\R$ for some $\bar{a}$ depending on~$a$ and~$\alpha$. Let
\begin{equation*}
\bar{b}:=\max\set*{\leb^1(F^\xi_t) : t\in[0,\bar{a}]}.
\end{equation*}
Then, by convexity, we get $k\ge P(F^{+\xi})\ge\sqrt{\bar{a}^2+2\bar{b}^2}$, which immediately implies that~$b$ depends only on~$\alpha,v>0$.

Now, let $(E_h)_{h\in\N}\subset\mathcal{B}^x(v)$ be a minimizing sequence for the problem~\eqref{eq:minpart}, namely
\begin{equation*}
\P_\alpha^x(E_h)\le C_{MP}^x\left(1+\tfrac{1}{h}\right),
\end{equation*}
where $C_{MP}^x:=\inf\set*{\P_\alpha^x(E): E\in\A^x(v)}$. Note that $C_{MP}^x>0$ because of the isoperimetric inequality~\eqref{eq:isop_ineq}. The sets $F_h=\Psi(E_h)$ are contained in the bounded set $\Phi([-a,a]\times[-b,b])$. Moreover, by~\eqref{eq:prop_Phi_Psi}, we have $\P^x(F_h)=\P_\alpha^x(E_h)\le k$ for all $h\in\N$. The space of function of bounded variation $BV(\R^2)$ is compactly embedded in $L^1_{loc}(\R^2)$ and therefore, possibly extracting a subsequence, there exists a measurable set $F\subset\Phi([-a,a]\times[-b,b])$ such that $\chi_{F_h}\to\chi_F$ a.e.\ and in $L^1(\R^2)$. Letting $E=\Phi(F)$, it follows that $\chi_{E_h}\to\chi_E$ a.e.\ and in $L^1(\R^2)$. Up to negligible sets, we have that $E\in\mathcal{B}^x(v)$. Thus there exist $r\in]0,+\infty[$ and $f\in C([0,r])\cap\Lip_{loc}(]0,r[)$ such that
\begin{equation*}
E=\set*{(x,y)\in\R^2 : |y|\le f(|x|),\ |x|\le r}.
\end{equation*}
Moreover, by the lower semicontinuity of the $\alpha$-perimeter, we have
\begin{equation*}
\P_\alpha^x(E)\le\liminf_{h\to+\infty} \P_\alpha^x(E_h)=C_{MP}^x,
\end{equation*}
which implies that $E$ is a minimum. This concludes the proof.   
\end{proof}
\subsection{Characterization of minimizers with vertical interface}
\label{sec:characterization_vertical}

\subsubsection{Regular minimizers}
\label{sss:regular}
In this section, we solve the minimal partition problem~\eqref{eq:minpart}. In Theorem~\ref{th:existence}, we proved the existence of a minimizer $E\in\A^x(v)\cap\S_x\cap\S_y^*$ that is bounded, $x$-transformed-convex and of the form
\begin{equation}\label{eq:regular_min}
E=\set*{(x,y)\in\R^2 : |y|\le f(|x|),\ |x|\le r}
\end{equation}
for some \emph{$x$-profile function} $f\in C([0,r])\cap\Lip_{loc}(]0,r[)$ with $r\in]0,+\infty[$. We call such a  minimizer a \emph{$x$-regular minimizer}. If $E$ is a $x$-regular minimizer, then 
\begin{equation}\label{eq:mP_v}
\P_\alpha^x(E)=2f(0)+4f(r)+4\int_0^r\sqrt{x^{2\alpha}+f'(x)^2}\ dx.
\end{equation}
where $\P_\alpha^x$ is defined in~\eqref{eq:Pa}.
By Theorem~\ref{th:reduction}, any minimizer $E\in\A^x(v)\cap\S_x$ of problem~\eqref{eq:minpart} is a $x$-regular minimizer.  Lemma~\ref{lemma:min_are_x-simm} below guarantees that if there exists a unique $x$-symmetric minimizer up to vertical translations, then this must be the unique minimizer of problem~\eqref{eq:minpart} up to vertical translations.

\begin{lemma}\label{lemma:min_are_x-simm}
Assume that problem~\eqref{eq:minpart} has a unique $x$-symmetric minimizer $E_0\in\A(v)\cap\S_x$ up to vertical translations. Then $E_0$ is the unique minimizer of problem~\eqref{eq:minpart} up to vertical translations. 
\end{lemma}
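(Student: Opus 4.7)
The plan is to show that any minimizer $E$ of \eqref{eq:minpart} must itself be $x$-symmetric, at which point the assumed uniqueness yields the conclusion. Given $E\in\A^x(v)$ with $\P_\alpha^x(E)<+\infty$, I would construct two $x$-symmetric competitors
\begin{equation*}
E^R = E^{+x} \cup \sigma(E^{+x}), \qquad E^L = \sigma(E^{-x}) \cup E^{-x},
\end{equation*}
where $\sigma(x,y)=(-x,y)$ is reflection across the $y$-axis. Both clearly belong to $\A^x(v)\cap\S_x$. The heart of the argument is the key inequality
\begin{equation*}
\P_\alpha^x(E^R) + \P_\alpha^x(E^L) \le 2\,\P_\alpha^x(E),
\end{equation*}
with a sharp equality case. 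Since $P_\alpha(E)=P(\Psi(E))$ by~\eqref{eq:prop_Phi_Psi} and $\Psi$ commutes with $\sigma$, I can prove this inequality working in the Euclidean transformed plane.

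Setting $F=\Psi(E)$, $A=\de^*F^{+\xi}\cap\{\xi=0\}$ and $B=\de^*F^{-\xi}\cap\{\xi=0\}$, a decomposition of $\de^*F$ into its portions on $\{\xi>0\}$, $\{\xi<0\}$ and $\{\xi=0\}$ (on which $F$ jumps exactly at $A\triangle B$) yields
\begin{equation*}
P(F) = P(F^{+\xi})+P(F^{-\xi})-2\haus^1(A\cap B), \qquad \P^\xi(F)=P(F^{+\xi})+P(F^{-\xi})-\haus^1(A\cap B),
\end{equation*}
whereas the symmetrized sets satisfy $\P^\xi(\Psi(E^R))=2P(F^{+\xi})-\haus^1(A)$ and $\P^\xi(\Psi(E^L))=2P(F^{-\xi})-\haus^1(B)$. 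Summing and simplifying gives
\begin{equation*}
\P^\xi(\Psi(E^R))+\P^\xi(\Psi(E^L)) = 2\,\P^\xi(F)-\haus^1(A\triangle B),
\end{equation*}
which is the desired inequality and shows that equality holds if and only if $A=B$ modulo $\haus^1$-null sets.

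If $E$ is a minimizer, then $E^R$ and $E^L$ are admissible and both must attain the minimum; hence both are $x$-symmetric minimizers and, by the assumed uniqueness, there exist $h,k\in\R$ with $E^R=E_0+(0,h)$ and $E^L=E_0+(0,k)$. Consequently $E^{+x}=E_0^{+x}+(0,h)$ and $E^{-x}=E_0^{-x}+(0,k)$. Now the equality condition forces $A=B$: since $\Psi$ fixes the $y$-axis pointwise and $E_0$ is $x$-symmetric, the traces of the two halves on $\{\xi=0\}$ are translates of a common symmetric set $A_0$, so $A_0+h\,e_2=A_0+k\,e_2$. The subtle step is to check that $A_0$ has positive $\haus^1$-measure; this follows from the form~\eqref{eq:regular_min} of $E_0$ provided by Theorem~\ref{th:existence}, because $f(0)>0$ (otherwise the two bubbles of $E_0$ would be disjoint, violating $\leb^2(E_0^{\pm x})=v>0$ together with minimality). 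Thus $h=k$, and $E=E_0+(0,h)$, as claimed. I expect this last identification $h=k$ to be the only delicate point; the remaining steps are bookkeeping with reduced boundaries.
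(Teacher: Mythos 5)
Your proof is correct and follows essentially the same route as the paper's: reflect each half of a minimizer $E$ across the $y$-axis, observe that both reflected competitors must again be minimizers, and invoke the assumed uniqueness of the $x$-symmetric minimizer. You are in fact more careful than the paper on two points: you prove the summed inequality $\P_\alpha^x(E^R)+\P_\alpha^x(E^L)\le 2\P_\alpha^x(E)$ (which sidesteps the need to know in advance which half has smaller perimeter), and you make explicit the equality-case analysis $\haus^1(A\triangle B)=0$ that forces the two vertical translations $h$ and $k$ to coincide --- a step the paper's proof passes over with ``this implies.'' The one flaw is your justification that $A_0$ has positive length: this does \emph{not} follow from Theorem~\ref{th:existence} alone, and your parenthetical reason is not right (if $f(0)=0$ the two bubbles still each have area $v$; they merely touch at a point instead of along a segment). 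The correct source is \textit{Step~2} of the proof of Theorem~\ref{th:charact_reg_min}, which shows $f(0)>0$ for the (unique) $x$-symmetric regular minimizer by a genuine variational argument; since the lemma is only applied after that theorem, citing it closes the gap without circularity.
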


\begin{proof}
Let $E\in\A(v)$ be a minimizer of problem~\eqref{eq:minpart}. Consider the set
\begin{equation*}
E^*=\set*{(x,y)\in\R^2 : (|x|,y)\in E^{+x}}.
\end{equation*}
Then $E^*\in\A^x(v)\cap\S_x$ satisfies $\P_\alpha^x(E^*)\le\P_\alpha^x(E)$. By the minimality of $E$, we must have $\P_\alpha^x(E^*)=\P_\alpha^x(E)$. Thus $E^*$ is also a minimizer of problem~\eqref{eq:minpart} and it is $x$-symmetric, so $E^*=E_0$ up to vertical translations. In particular, $E^{+x}=E_0^{+x}$ up to vertical translations. With a similar argument, we also get $E^{-x}=E_0^{-x}$ up to vertical translations. Since $E_0$ is $x$-symmetric, this implies that $E=E_0$ up to vertical translations.
\end{proof}

\subsubsection{Characterization and examples}

We are now ready for the main result of this section. In Theorem~\ref{th:charact_reg_min} below, we prove that, given $\alpha\ge0$ and $v>0$, the $x$-regular minimizer of problem~\eqref{eq:minpart} is unique and has smooth boundary far from the $y$-axis. By Lemma~\ref{lemma:min_are_x-simm} and Section \ref{sss:regular}, this is the unique minimizer of problem~\eqref{eq:minpart} up to vertical translations. 

\begin{theorem}[Characterization of minimal double bubbles with vertical interface]\label{th:charact_reg_min}
Let $\alpha\ge0$, $v>0$ and let $E\in\mathcal A^x(v)\cap\S_x\cap\S_y^*$ be a regular minimizer of problem~\eqref{eq:minpart} as in~\eqref{eq:regular_min} for some profile function $f\in C([0,r])\cap\Lip_{loc}(]0,r[)$ with $r\in]0,+\infty[$. Then $E$ is unique and its profile function is given by
\begin{equation}\label{eq:th_expr_f_k}
f(x)=-\frac{1}{|k|^{\alpha+1}}\int_{-1}^{kx+\frac{1}{2}}\frac{t(\frac{1}{2}-t)^\alpha}{\sqrt{1-t^2}}\ dt
\end{equation} 
for all $x\in[0,r]$, where
\begin{equation}\label{eq:th_expr_k}
k=-\left(-\frac{2}{v}\int_{-1}^\frac{1}{2}\frac{t(\frac{1}{2}-t)^{\alpha+1}}{\sqrt{1-t^2}}dt\right)^\frac{1}{\alpha+2}.
\end{equation}
In particular, we have $f\in C^\infty(]0,r[)$ and $r=-\frac{3}{2k}$. Moreover, $f$ satisfies $f(0)>0$, $f(r)=0$, $f'(r)=-\infty$. In addition, if $\alpha>0$ then $f'(0)=0$ and $f$ has a strict maximum at $x=-\frac{1}{2k}$. Finally, the minimum of problem~\eqref{eq:minpart} is given by
\begin{equation}\label{eq:th_expr_P_alpha}
\P_\alpha^x(E)=\frac{2}{|k|^{\alpha+1}}\int_{-1}^{\frac{1}{2}}\frac{(2-t)(\frac{1}{2}-t)^\alpha}{\sqrt{1-t^2}}\ dt.
\end{equation}
\end{theorem}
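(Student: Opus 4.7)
The argument is a first-variation analysis of the reduced one-dimensional functional
\begin{equation*}
\P_\alpha^x(E) = 2f(0) + 4f(r) + 4\int_0^r \sqrt{x^{2\alpha}+f'(x)^2}\,dx,
\end{equation*}
see \eqref{eq:mP_v}, subject to the volume constraint $\int_0^r f(x)\,dx = v/2$, in which both $f$ and the endpoint $r>0$ are unknowns. The minimizer provided by Theorem \ref{th:existence} is in this class, and by Lemma \ref{lemma:min_are_x-simm} it is enough to determine the $x$-symmetric minimizer up to vertical translation.

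Interior variations $\eta \in C^\infty_c(]0,r[)$ together with a Lagrange multiplier $\lambda$ for the volume constraint give the weak Euler--Lagrange equation on $]0,r[$, which integrates once to
\begin{equation*}
\frac{f'(x)}{\sqrt{x^{2\alpha}+f'(x)^2}} = kx + c,\qquad k = -\lambda/4,\ c\in\R.
\end{equation*}
Since the left-hand side lies in $]-1,1[$, this can be solved for $f'$, which already gives $f \in C^\infty(]0,r[)$. Next, outer and endpoint variations produce the natural boundary conditions: $\eta(0)\neq 0$ gives $2 - 4c = 0$, hence $c=1/2$; $\eta(r)\neq 0$ gives $4 + 4(kr+c) = 0$, hence $kr + 1/2 = -1$, i.e.\ $r = -3/(2k)$ and $k<0$; the identity $(kr+c)^2 = 1$ forces $f'/\sqrt{x^{2\alpha}+f'^2}\to-1$, so $f'(r^-) = -\infty$; finally a variation $r \mapsto r + \epsilon\delta$, after cancellation using the previous limit, reduces to $\lambda f(r) = 0$, hence $f(r)=0$ (since $\lambda\neq 0$).

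Solving the first integral for $f'$ gives $f'(x) = (kx+\tfrac12)\,x^\alpha/\sqrt{1-(kx+\tfrac12)^2}$; integrating backward from $r$ using $f(r)=0$ with the substitution $t = kx+\tfrac12$, so that $x = (\tfrac12-t)/|k|$, produces exactly \eqref{eq:th_expr_f_k}. The same substitution applied to the volume constraint, after the integration by parts $\int_0^r f\,dx = -\int_0^r xf'\,dx$ (valid since $f(r)=0$), yields
\begin{equation*}
\frac{v}{2} = -\frac{1}{|k|^{\alpha+2}}\int_{-1}^{1/2}\frac{t(\tfrac12-t)^{\alpha+1}}{\sqrt{1-t^2}}\,dt,
\end{equation*}
which inverts to \eqref{eq:th_expr_k}; the integral on the right is strictly negative because the mass for $t<0$ dominates the positive mass on $]0,\tfrac12[$ due to the asymmetric weight $(\tfrac12-t)^{\alpha+1}$, and the same sign analysis in \eqref{eq:th_expr_f_k} at $x=0$ gives $f(0)>0$. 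From the explicit formula we read off $f\in C^\infty(]0,r[)$, the condition $f'(0)=0$ for $\alpha>0$ (since $x^\alpha\to 0$ while the remaining factor is finite), and the unique sign change of $f'$ at $x=-1/(2k)\in\,]0,r[$, which gives the strict interior maximum. Inserting $\sqrt{x^{2\alpha}+f'^2} = x^\alpha/\sqrt{1-(kx+\tfrac12)^2}$ into \eqref{eq:mP_v} and applying the same substitution produces \eqref{eq:th_expr_P_alpha}. Uniqueness is then automatic, as $k$ is determined by $v$ through \eqref{eq:th_expr_k} and $f$, $r$ are determined by $k$.

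The main obstacle is the rigorous justification of the boundary analysis at $x=r$: $f$ is only $\Lip_{loc}$ on $]0,r[$ and $f'$ blows up to $-\infty$, so both the identity $f'(r^-)/\sqrt{r^{2\alpha}+f'(r^-)^2} = -1$ and the extra condition $f(r)=0$ must be derived through admissible variations (preserving the profile structure \eqref{eq:reduction_profile} with $f\ge 0$) rather than by naive boundary differentiation. A safe route is to read the term $4f(r)$ as the length of a vertical segment that a competitor can shrink to zero, to combine outer perturbations of $f$ with inner horizontal variations $x \mapsto x + \epsilon\zeta(x)$ supported away from the origin, and to pass to the limit through the singularity of $f'$ at $r$ in a controlled way.
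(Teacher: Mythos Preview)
Your overall plan --- first variation, first integral $f'/\sqrt{x^{2\alpha}+f'^2}=kx+c$, then boundary conditions, then explicit integration via $t=kx+\tfrac12$ --- is the paper's route, and your computations leading to \eqref{eq:th_expr_f_k}, \eqref{eq:th_expr_k}, \eqref{eq:th_expr_P_alpha} are correct. The difference lies in how the boundary data $c=\tfrac12$, $f(r)=0$, $kr+c=-1$ are obtained, and there your argument has a gap.

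At $x=0$ you derive $c=\tfrac12$ from a variation with $\eta(0)\neq0$ and only afterwards read $f(0)>0$ off the explicit formula. This is circular: if $f(0)=0$, the constraint $f+\eps\eta\ge0$ permits only $\eta(0)\ge0$, and the first variation yields merely the inequality $2-4c\ge0$, i.e.\ $c\le\tfrac12$. The paper breaks the circle by proving $f(0)>0$ \emph{first}, via two explicit competitor constructions that use only the first integral with an as-yet-undetermined $d\in\,]-1,1]$: if $f'(0^+)\in\,]0,\infty]$ one flattens $f$ on $[0,\eps]$; if $f'(0^+)=0$ one shifts $f$ by $\eps$ to the left. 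In each case a direct expansion shows the rescaled competitor strictly decreases $\P_\alpha^x$, contradicting minimality. Only once $f(0)>0$ is secured is the boundary variation at $0$ performed to conclude $c=\tfrac12$.

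At $x=r$ you note the difficulty, but the chain you propose (vary $\eta(r)$ to get $kr+c=-1$, then vary $r$ to get $f(r)=0$) has the same circularity --- the $\eta(r)\neq0$ step needs $f(r)>0$ for two-sided admissibility, yet the conclusion is $f(r)=0$ --- on top of the singularity $f'(r^-)=-\infty$. The paper bypasses all of this by invoking the regularity theory of $\Lambda$-minimizers of perimeter: away from the $y$-axis, $\partial E$ must be smooth, which directly forces $f(r)=0$ (otherwise $\partial E$ has a corner at $(r,\pm f(r))$) and $f'(r^-)=-\infty$ (vertical tangent). The relation $kr+d=-1$ then falls out of the first integral by taking $x\to r^-$, with no endpoint variation at $r$ required.

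So the two missing ingredients are an a priori proof of $f(0)>0$ that does not presuppose $c=\tfrac12$, and either an appeal to $\Lambda$-minimizer regularity or a genuinely one-sided analysis at $r$ replacing your endpoint variations there.
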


\begin{proof}
We split the proof in several steps.

\medskip

\textit{Step~1: differential equation for the profile}.
We perform a first variation argument. Let $\psi\in C_c^\infty(]0,r[)$ be such that $\int_0^r\psi\ dx=0$. For $\eps\in\R$ small, consider the set 
\begin{equation*}
E_\eps=\set*{(x,y)\in\R^2 : |y|\le f(|x|)+\eps\psi(|x|),\ |x|\le r}.
\end{equation*}
Note that $E_\eps\in\A^x(v)$ for all $\eps\in\R$ small, since $f(x)>0$ for every $x\in]0,r[$ by definition.
Then, by~\eqref{eq:mP_v} and the minimality of $E$, after an integration by parts we find
\begin{equation*}
0=\frac{d\P_\alpha^x(E_\eps)}{d\eps}\bigg|_{\eps=0}
=4\int_0^r\frac{d}{dx}\left(\frac{f'}{\sqrt{(f')^2+x^{2\alpha}}}\right)\psi\ dx.
\end{equation*}
Thus, by the fundamental lemma of the Calculus of Variations, there exists a constant $k\in\R$ such that
\begin{equation}\label{eq:def_k}
\frac{d}{dx}\left(\frac{f'}{\sqrt{(f')^2+x^{2\alpha}}}\right)=k
\end{equation}
for all $x\in]0,r[$. Integrating, we get 
\begin{equation}\label{eq:def_d}
\frac{f'}{\sqrt{(f')^2+x^{2\alpha}}}=kx+d
\end{equation}
for all $x\in]0,r[$. Thus $f'$ exists at every point $x\in]0,r[$ and satisfies $\sgn f'(x)=\sgn(kx+d)$ for all $x\in]0,r[$. Therefore
\begin{equation*}
f'(x)^2\,\big(1-(kx+d)^2\big)=x^{2\alpha}(kx+d)^2
\end{equation*}
for all $x\in]0,r[$, which implies $|kx+d|<1$ for all $x\in]0,r[$. In particular, we deduce that $|d|\le 1$ and 
\begin{equation}\label{eq:f'_kd}
f'(x)=\frac{x^\alpha(kx+d)}{\sqrt{1-(kx+d)^2}}
\end{equation}
for all $x\in]0,r[$. As a consequence, $f\in C^\infty(]0,r[)$. 
By the regularity theory of $\Lambda$-minimizer of perimeter, the boundary $\de E$ is smooth far from the $y$-axis. Therefore, we must have
\begin{equation*}
f(r)=0
\qquad\text{and}\qquad
f'(r)=\lim_{x\to r} f'(x)=-\infty.
\end{equation*} 
By the expression of~$f'$ in~\eqref{eq:f'_kd}, this implies that $k\ne0$ and $kr+d=-1$. As a consequence, since $r>0$ and $|d|\le1$, we get $k<0$ and $d\in]-1,1]$.

\medskip

\textit{Step~2: proof of $f(0)>0$}. Passing to the limit in~\eqref{eq:f'_kd} as $x\to0$ we must have that 
\begin{equation*}
f'(0):=\lim_{x\to0} f'(x)\in[0,+\infty].
\end{equation*}
Assume that $f(0)=0$ by contradiction. We distinguish two cases.

\smallskip

\textit{Case~A: $f'(0)\in]0,+\infty]$}. For any $\eps>0$ sufficiently small, we define $f_\eps\colon[0,r]\to[0,+\infty[$ by setting 
\begin{equation*}
f_\eps(x)=
\begin{cases}
f(\eps) & x\in[0,\eps[\\
f(x) & x\in[\eps,r].
\end{cases}
\end{equation*}
We then consider the set 
\begin{equation*}
E_\eps:=\set*{(x,y)\in\R^2 : |y|\le f_\eps(|x|),\ |x|\le r}.
\end{equation*}
Note that
\begin{equation*}
\P_\alpha^x(E_\eps)=\P_\alpha^x(E)+2f(\eps)+4\int_0^\eps x^\alpha-\sqrt{x^{2\alpha}+f'(x)^2}\ dx
\end{equation*}
and
\begin{equation*}
\leb^2(E_\eps)=\leb^2(E)+4\int_0^\eps f(\eps)-f(x)\ dx.
\end{equation*}
We thus define
\begin{equation*}
\lambda_\eps=\left(\frac{\leb^2(E)}{\leb^2(E_\eps)}\right)^\frac{1}{\alpha+2}<1
\qquad\text{and}\qquad
F_\eps=\delta_{\lambda_\eps}(E_\eps).
\end{equation*}
Then $F_\eps\in\A^x(v)$ and $\P_\alpha^x(F_\eps)=\lambda_\eps^{\alpha+1}\P_\alpha^x(E_\eps)$. We claim that $\P_\alpha^x(F_\eps)<\P_\alpha^x(E)$ for any $\eps>0$ sufficiently small. A direct computation gives 
\begin{equation*}
\frac{d\P_\alpha^x(F_\eps)}{d\eps}\bigg|_{\eps=0}=-2f'(0)\in[-\infty,0[
\end{equation*}
and the claim follows from the Taylor's expansion of the function $\eps\mapsto\P_\alpha^x(F_\eps)$. But this contradicts the minimality of~$E$. 

\smallskip

\textit{Case~B: $f'(0)=0$}. For any $\eps>0$ sufficiently small, we define $f_\eps\colon[0,r]\to[0,+\infty[$ by setting 
\begin{equation*}
f_\eps(x)=
\begin{cases}
f(x+\eps) & x\in[0,r-\eps[\\
0 & x\in[r-\eps,r].
\end{cases}
\end{equation*}
We then consider the set 
\begin{equation*}
E_\eps:=\set*{(x,y)\in\R^2 : |y|\le f_\eps(|x|),\ |x|\le r}.
\end{equation*}
Note that
\begin{equation*}
\P_\alpha^x(E_\eps)=\P_\alpha^x(E)+2f(\eps)+4\int_\eps^r\sqrt{(x-\eps)^{2\alpha}+f'(x)^2}\, dx-4\int_0^r\sqrt{x^{2\alpha}+f'(x)^2}\, dx
\end{equation*}
and
\begin{equation*}
\leb^2(E_\eps)=\leb^2(E)-4\int_0^\eps f(x)\ dx.
\end{equation*}
We thus define
\begin{equation*}
\lambda_\eps=\left(\frac{\leb^2(E)}{\leb^2(E_\eps)}\right)^\frac{1}{\alpha+2}>1
\qquad\text{and}\qquad
F_\eps=\delta_{\lambda_\eps}(E_\eps).
\end{equation*}
Then $F_\eps\in\A^x(v)$ and $\P_\alpha^x(F_\eps)=\lambda_\eps^{\alpha+1}\P_\alpha^x(E_\eps)$. We claim that $\P_\alpha^x(F_\eps)<\P_\alpha^x(E)$ for any $\eps>0$ sufficiently small. A direct computation gives 
\begin{equation*}
\frac{d\P_\alpha^x(F_\eps)}{d\eps}\bigg|_{\eps=0}=-4\int_0^r\frac{\alpha\,x^{2\alpha-1}}{\sqrt{x^{2\alpha}+f'(x)^2}}\ dx<0
\end{equation*}
and the claim follows from the Taylor's expansion of the function $\eps\mapsto\P_\alpha(F_\eps)$. But this contradicts the minimality of~$E$. 

\medskip

\textit{Step~3: proof of $d=\frac{1}{2}$ and $f'(0)=0$}. 
We perform a first variation argument. Let $\psi\in C^\infty(]0,r[)$ be such that $\int_0^r\psi\ dx=0$ and $\psi(r)=0$. For $\eps\in\R$ small, consider the set 
\begin{equation*}
E_\eps=\set*{(x,y)\in\R^2 : |y|\le f(|x|)+\eps\psi(|x|),\ |x|\le r}.
\end{equation*}
Note that $E_\eps\in\A^x(v)$ for all $\eps\in\R$ small, since $f(x)>0$ for every $x\in[0,r[$ by \textit{step~2}.
Then, by~\eqref{eq:mP_v} and the minimality of $E$, similarly as in \textit{step~1}, we find
\begin{equation*}
0=\frac{d\P_\alpha(E_\eps)}{d\eps}\bigg|_{\eps=0}
=2\psi(0)+4\int_0^r\frac{f'\,\psi'}{\sqrt{(f')^2+x^{2\alpha}}}\ dx
=(2-4d)\,\psi(0).
\end{equation*}
The last equality follows by an integration by parts recalling~\eqref{eq:def_k}, \eqref{eq:def_d} and the assumptions on~$\psi$. By the arbitrariness of~$\psi$, we find $d=\frac{1}{2}$. Recalling the expression of $f'$ in~\eqref{eq:expr_f_k}, we get $f'(0)=\lim_{x\to0}f'(x)=0$ for all $\alpha>0$.

\medskip

\textit{Step~4: characterization of the profile}. 
By the expression of $f'$ in~\eqref{eq:f'_kd}, for $x\in[0,r]$ we can compute 
\begin{equation}\label{eq:expr_f_k}
\begin{split}
f(x)&=f(x)-f(r)
=-\int_x^r f'(t)\ dt=\\
&=-\int_x^r\frac{t^{\alpha}(kt+\frac{1}{2})}{\sqrt{1-(kt+\frac{1}{2})^2}}\ dt
=-\frac{1}{|k|^{\alpha+1}}\int_{-1}^{kx+\frac{1}{2}}\frac{t(\frac{1}{2}-t)^\alpha}{\sqrt{1-t^2}}\ dt
\end{split}
\end{equation}
using the information $kr+\frac{1}{2}=-1$ found in \textit{step~1}.
Integrating the function $f$ on $[0,r]$ using its expression~\eqref{eq:expr_f_k}, we get
\begin{equation*}
\begin{split}
v&=2\int_0^r f(x)\ dx
=-\frac{2}{|k|^{\alpha+1}}\int_0^r\int_{-1}^{kx+\frac{1}{2}}\frac{t(\frac{1}{2}-t)^\alpha}{\sqrt{1-t^2}}\ dtdx=\\
&=-\frac{2}{|k|^{\alpha+2}}\int_{-1}^\frac{1}{2}\int_t^{\frac{1}{2}}\frac{t(\frac{1}{2}-t)^\alpha}{\sqrt{1-t^2}}\ dxdt
=-\frac{2}{|k|^{\alpha+2}}\int_{-1}^\frac{1}{2}\frac{t(\frac{1}{2}-t)^{\alpha+1}}{\sqrt{1-t^2}}dt
\end{split}
\end{equation*}
applying Fubini's theorem, which immediately gives~\eqref{eq:th_expr_k}. By~\eqref{eq:mP_v} and~\eqref{eq:f'_kd}, we can also compute
\begin{equation*}
\begin{split}
\P_\alpha^x(E)&=2f(0)+4\int_0^r\sqrt{x^{2\alpha}+f'(x)^2}\ dx=\\
&=-\frac{2}{|k|^{\alpha+1}}\int_{-1}^{\frac{1}{2}}\frac{t(\frac{1}{2}-t)^\alpha}{\sqrt{1-t^2}}\ dt+4\int_0^r\sqrt{x^{2\alpha}+\frac{x^{2\alpha}(kx+\frac{1}{2})^2}{1-(kx+\frac{1}{2})^2}}\ dx=\\
&=-\frac{2}{|k|^{\alpha+1}}\int_{-1}^{\frac{1}{2}}\frac{t(\frac{1}{2}-t)^\alpha}{\sqrt{1-t^2}}\ dt+
\frac{4}{|k|^{\alpha+1}}\int_{-1}^{\frac{1}{2}}\frac{(\frac{1}{2}-t)^\alpha}{\sqrt{1-t^2}}\ dt=\\
&=\frac{2}{|k|^{\alpha+1}}\int_{-1}^{\frac{1}{2}}\frac{(\frac{1}{2}-t)^\alpha(2-t)}{\sqrt{1-t^2}}\ dt
\end{split}
\end{equation*}
using the information $kr+\frac{1}{2}=-1$ found in \textit{step~1}.
This concludes the proof.
\end{proof}

\begin{figure}[h!]
\pgfmathsetmacro{\s}{0.9} 
\tikzset{
    mdbeuclid/.pic={
\draw [blue,thick,domain=-117:117] plot ({1+sqrt(5)*cos(\x)}, {sqrt(5)*sin(\x)});
\fill [blue,opacity=0.1,domain=-117:117] plot ({1+sqrt(5)*cos(\x)}, {sqrt(5)*sin(\x)}); 
}}
\begin{tikzpicture}[scale=\s]
\draw [thick,->] (-4,0) -- (4,0);
\draw [thick,->] (0,-3) -- (0,3);
\draw (4,0) node [right] {\small $x$};
\draw (0,3) node [above] {\small $y$};
\draw (1,0) node [below] {\tiny $\frac{1}{2|k|}$} node {\tiny\textbullet};
\draw (3.6,0) node [below] {\tiny $\frac{3}{2|k|}$};
\draw [blue,thick,dashed] (1,0) -- (0,2);
\pic [scale=\s] {mdbeuclid};
\pic [scale=\s,xscale=-1] {mdbeuclid};
\draw (2.25,2.2) node [right] {\small $f$};
\draw (0,0.8) node [below,right] {\small $\theta$};
\draw [domain=270:300] plot ({0.9*cos(\x)}, {2+0.9*sin(\x)});
\draw [red,thick,shift={(0,1.6)},rotate around={30:(0,0.4)}] (0,0) -- (0.4,0) -- (0.4,0.4);
\draw [red,thick,shift={(0,1.6)},rotate around={30:(0,0.4)}] (0,0.4) -- (1.25,0.4);
\draw (0.5,1.8) node [below,right] {$\frac{\pi}{2}$};
\end{tikzpicture}
\caption{The minimizer of problem~\eqref{eq:minpart} in the Euclidean case $\alpha=0$.}
\label{fig:dbeucl}
\end{figure}

\begin{example}[The Euclidean case]\label{ex:euclidean}
In the Euclidean case $\alpha=0$, the $x$-profile function defined in~\eqref{eq:th_expr_f_k} can be explicitly computed and we find 
\begin{equation*}
f(x)=\frac{1}{|k|}\sqrt{1-\left(kx+\frac{1}{2}\right)^2}=\sqrt{\frac{1}{k^2}-\left(x+\frac{1}{2k}\right)^2}
\end{equation*}
for all $x\in[0,r]$, where $k=-\sqrt{\frac{8\pi+3\sqrt{3}}{12v}}$ by~\eqref{eq:th_expr_k}. This is the profile function of a circle of radius $\frac{1}{|k|}$ and center $(0,\frac{1}{2|k|})$. In particular, we have $f'(0)=\frac{1}{\sqrt{3}}$ and the angle $\gamma=\arctan f'(0)$ is given by
\begin{equation*}
\gamma=\frac{\pi}{2}+\theta
=\frac{\pi}{2}+\arcsin\left(\frac{1}{2}\right)
=\frac{2\pi}{3},
\end{equation*}
see Figure~\ref{fig:dbeucl}. Thanks to Theorem~\ref{th:charact_reg_min}, up to Euclidean translations, the unique minimizer of problem~\eqref{eq:minpart} for $\alpha=0$ is the symmetric double bubble found in~\cite{FABHZ93}.
\end{example}

\begin{figure}[h!]
\pgfmathsetmacro{\k}{-exp(ln((8*pi+9*sqrt(3))/(12))/3)}
\pgfmathsetmacro{\d}{0.5}
\pgfmathsetmacro{\dom}{(-1-\d)/\k}
\pgfmathsetmacro{\xdom}{1.4*\dom}
\pgfmathsetmacro{\codom}{(pi/2+asin(\d)/180*pi+(\d)*sqrt(1-(\d)^2))/(2*(\k)^2)}
\pgfmathsetmacro{\ydom}{1.4*\codom}
\pgfmathsetmacro{\s}{70}
\pgfmathsetmacro{\ss}{3}
\tikzset{
    profile/.pic={
\draw [thick,blue] 
 plot 
      [domain=0:\dom, samples=\s]  
(\x,{(pi/2+asin(\k*\x+\d)/180*pi+(\d-\k*\x)*sqrt(1-(\k*\x+\d)^2))/(2*(\k)^2)}) -- (\dom,0);
\fill [opacity=0.1,blue] 
 plot 
      [domain=0:\dom, samples=\s]  
(\x,{(pi/2+asin(\k*\x+\d)/180*pi+(\d-\k*\x)*sqrt(1-(\k*\x+\d)^2))/(2*(\k)^2)}) -- (\dom,0) -- (0,0) -- cycle;
    }}
\begin{tikzpicture}[scale=\ss]
\pic [scale=\ss] {profile};
\pic [scale=\ss,yscale=-1] {profile};
\pic [scale=\ss,xscale=-1] {profile};
\pic [scale=\ss,xscale=-1,yscale=-1] {profile};
\draw [thick,->] (-\xdom,0) -- (\xdom,0);
\draw [thick,->] (0,-\ydom) -- (0,\ydom);
\draw (\xdom,0) node [right] {\small $x$};
\draw (\dom+.1,0) node [below] {\tiny $\frac{3}{2|k|}$};
\draw (0,\ydom) node [above] {\small $y$};
\draw (1,0.5) node [right] {\small $f$};
\draw (-\d/\k,0) node [below] {\tiny $\frac{1}{2|k|}$};
\draw [dashed,thin] (-\d/\k,-0.02) -- (-\d/\k,0.6);
\draw (-\d/\k,0.6) node [above] {\tiny $f'(\tfrac{1}{2|k|})=0$};
\draw [thick,red] (0.1,\codom) -- (0.1,\codom-0.1) -- (0,\codom-0.1);
\draw (0.16,\codom-0.14) node {\small $\frac{\pi}{2}$};
\end{tikzpicture}
\caption{The minimizer of problem~\eqref{eq:minpart} in the Grushin case $\alpha=1$.}
\label{fig:grushin_bubble}
\end{figure}

\begin{example}[The Grushin case]\label{ex:grushin_x}
In the Grushin case $\alpha=1$, the profile function defined in~\eqref{eq:th_expr_f_k} can be explicitly computed and we find 
\begin{equation*}
f(x)=\frac{1}{2k^2}\left(\frac{\pi}{2}+\arcsin\left(\frac{1}{2}+kx\right)+\left(\frac{1}{2}-kx\right)\sqrt{1-\left(kx+\frac{1}{2}\right)^2}\right)
\end{equation*}
for all $x\in[0,r]$, where $k=-\sqrt[3]{\frac{8\pi+9\sqrt{3}}{12v}}$ by~\eqref{eq:th_expr_k}, see Figure~\ref{fig:grushin_bubble}. Thanks to Theorem~\ref{th:charact_reg_min}, up to vertical translations, this is the $x$-profile function of the unique minimizer of problem~\eqref{eq:minpart} for $\alpha=1$.
\end{example}

\begin{figure}[h!]
\pgfmathsetmacro{\k}{-exp(ln((16*pi+9*sqrt(3))/(24))/3)}
\pgfmathsetmacro{\d}{0.5}
\pgfmathsetmacro{\dom}{(-1-\d)/\k}
\pgfmathsetmacro{\xdom}{1.1*\dom}
\pgfmathsetmacro{\domt}{((\dom)^2)/2}
\pgfmathsetmacro{\xdomt}{1.15*\domt}
\pgfmathsetmacro{\codom}{(pi/2+asin(\d)/180*pi+(\d)*sqrt(1-(\d)^2))/(2*(\k)^2)}
\pgfmathsetmacro{\ydom}{1.4*\codom}
\pgfmathsetmacro{\s}{70}
\pgfmathsetmacro{\ss}{4.23}
\tikzset{
    profile/.pic={
\draw [thick,blue] 
 plot 
      [domain=0:\dom, samples=\s]  
(\x,{(pi/2+asin(\k*\x+\d)/180*pi+(\d-\k*\x)*sqrt(1-(\k*\x+\d)^2))/(2*(\k)^2)}) -- (\dom,0);
\fill [opacity=0.1,blue] 
 plot 
      [domain=0:\dom, samples=\s]  
(\x,{(pi/2+asin(\k*\x+\d)/180*pi+(\d-\k*\x)*sqrt(1-(\k*\x+\d)^2))/(2*(\k)^2)}) -- (\dom,0) -- (0,0) -- cycle;
    }}
\tikzset{
    profilet/.pic={
\draw [thick,blue] 
 plot 
      [domain=0:\domt, samples=\s]  
(\x,{(pi/2+asin(\k*((2*\x)^0.5)+\d)/180*pi+(\d-\k*((2*\x)^0.5))*sqrt(1-(\k*((2*\x)^0.5)+\d)^2))/(2*(\k)^2)}) -- (\domt,0);
\fill [opacity=0.1,blue] 
 plot 
      [domain=0:\domt, samples=\s]  
(\x,{(pi/2+asin(\k*((2*\x)^0.5)+\d)/180*pi+(\d-\k*((2*\x)^0.5))*sqrt(1-(\k*((2*\x)^0.5)+\d)^2))/(2*(\k)^2)}) -- (\domt,0) -- (0,0) -- cycle;
    }}
\begin{tikzpicture}
\begin{scope}[scale=1.25]
	\draw [thick,->] (-0.175,0) -- (3.5,0);
	\draw [thick,->] (0,-0.175) -- (0,3);
	\draw (3.5,0) node [right] {\small $x$};
	\draw (0,3) node [above] {\small $y$};
	\draw [blue,thick,domain=0:117] plot ({1+sqrt(5)*cos(\x)}, {sqrt(5)*sin(\x)});
	\fill [blue,opacity=0.1,domain=0:117] plot ({1+sqrt(5)*cos(\x)}, {sqrt(5)*sin(\x)})--(0,0)--cycle;
	\draw (3,1.5) node [right] {\small $f$};
	\draw [domain=30:-90,,thick,red] plot ({0.5*cos(\x)}, {2+0.5*sin(\x)});
	\draw [thick,red] (0,2) -- (1,2.55);
	\draw (1.1,1.8) node {\small $\gamma=\frac{2\pi}{3}$};
	\draw (1.5,-.5) node {\small\bf (a)};
\end{scope}
\begin{scope}[shift={(5.5,0)},scale=\ss]
	\pic [scale=\ss] {profile};
	\draw [thick,->] (-0.05,0) -- (\xdom,0);
	\draw [thick,->] (0,-0.05) -- (0,\ydom);
	\draw (\xdom,0) node [right] {\small $x$};
	\draw (0,\ydom) node [above] {\small $y$};
	\draw [thick,red] (0,\codom) -- (.3,\codom);
	\draw [thick,red] (.1,\codom) -- (.1,\codom-.1) -- (0,\codom-.1);
	\draw (.25,\codom-.13) node {\small $\gamma=\frac{\pi}{2}$};
	\draw (.95,0.5) node [right] {\small $f$};
	\draw (\xdom/2,-.14) node {\small\bf (b)};
\end{scope}
\begin{scope}[shift={(11.5,0)},scale=\ss]
	\pic [scale=\ss] {profilet};
	\draw [thick,->] (-0.05,0) -- (\xdomt,0);
	\draw [thick,->] (0,-0.05) -- (0,\ydom);
	\draw (\xdomt,0) node [right] {\small $\xi$};
	\draw (0,\ydom) node [above] {\small $\eta$};
	\draw (0.45,0.5) node [right] {\small $\hat{f}$};
	\draw [domain=30:-90,thick,red] plot ({0.15*cos(\x)}, {\codom+0.15*sin(\x)});
	\draw [thick,red] (0,\codom) -- (.3,\codom*2/1.6);
	\draw (.2,\codom-.19) node {\small $\hat{\gamma}=\frac{2\pi}{3}$};
	\draw (\xdomt/2,-.14) node {\small\bf (c)};
\end{scope}
\end{tikzpicture}
\caption{The profile angle at the interface in problem~\eqref{eq:minpart}: \textbf{(a)} the Euclidean case $\alpha=0$; \textbf{(b)} the Grushin case $\alpha>0$; \textbf{(c)} the transformed Grushin case $\alpha>0$.}
\label{fig:angles}
\end{figure}

\subsubsection{Profile angle at the vertical interface}\label{sss:angle}

By Theorem~\ref{th:charact_reg_min}, for any $\alpha\ge0$ the $x$-profile function of the minimizer of problem~\eqref{eq:minpart} meets the vertical interface at an angle 
\begin{equation*}
\gamma=\frac{\pi}{2}+\theta, \qquad \theta=\arctan f'(0),
\end{equation*}
see Figure~\ref{fig:angles}. In the Euclidean case $\alpha=0$, we have $\theta=\frac{\pi}{6}$, as we found in Example~\ref{ex:euclidean} accordingly to the well-known regularity theory. In the Grushin case $\alpha>0$, instead, we have $\theta=0$, which means that the minimizer of problem~\eqref{eq:minpart} has a $C^1$-boundary consisting of two symmetric curves joining the vertical interface at two triple points with right angles. However, if we transform the Grushin plane $(\R^2,P_\alpha,\leb^2)$ into the Euclidean plane with weighted volume $(\R^2,P,\M_\alpha)$ using the maps defined in~\eqref{eq:def_Phi_Psi}, then the set $F=\Psi(E)$ has $\xi$-profile function $\hat{f}\colon[0,\hat{r}]\to[0,+\infty[$ given by
\begin{equation*}
\hat{f}(\xi)=f\left(((\alpha+1)\xi)^\frac{1}{\alpha+1}\right), \qquad \xi\in[0,\hat{r}],
\end{equation*}
where $\hat{r}=\frac{r^{\alpha+1}}{\alpha+1}$ and $f\colon[0,r]\to[0,+\infty[$ is the $x$-profile function of~$E$. An elementary computation shows that the profile angle at the interface in the transformed plane is given by $\hat{\gamma}=\frac{\pi}{2}+\hat{\theta}$, where
\begin{equation*}
\hat{\theta}=\arctan\hat{f}'(0)
=\arctan\left(\lim_{x\to0}\frac{f'(x)}{x^\alpha}\right)
=\arctan\left(\frac{1}{\sqrt{3}}\right)
=\frac{\pi}{6}.
\end{equation*}
In other words, the problem~\eqref{eq:minpart} reformulated in the transformed plane $(\R^2,P,\M_\alpha)$ has a unique minimizer consisting of two symmetric curves joining the vertical interface at two triple points with angles~$\frac{2\pi}{3}$.      

\section{The double bubble problem for horizontal interfaces}
\label{sec:horizontal}

%
%
\subsection{Existence of minimizers with horizontal interface}
\label{sec:existence_horizontal}

\subsubsection{Reduction to more symmetric sets}

In this section we prove the existence of solutions to problem~\eqref{eq:minpart_y}. The following result restricts the class of admissible sets to more symmetric ones.

\begin{theorem}[Reduction]\label{th:reduction_y}
Let $v>0$ and let $E\in\A^y(v)$ with $\P_\alpha^y(E)<+\infty$. There exists $\tilde{E}\in\A^y(v)\cap\S_x^*\cap\S_y$, bounded and $y$-transformed-convex, such that $\P_\alpha^y(\tilde{E})\le\P_\alpha^y(E)$. Moreover, in the case $\alpha>0$, $E\in\S_y$ is equivalent to a $y$-transformed-convex set if and only if $\P_\alpha^y(\tilde{E})=\P_\alpha^y(E)$. In addition, there exist $r\in]0,+\infty[$ and $g\in C([0,r])\cap\Lip_{loc}(]0,r[)$ such that
\begin{equation}\label{eq:reduction_profile_y}
\tilde{E}=\set*{(x,y)\in\R^2 : |x|\le g(|y|),\ |y|\le r}.
\end{equation}
\end{theorem}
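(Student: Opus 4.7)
The plan is to closely mirror the proof of Theorem~\ref{th:reduction}, interchanging the roles of $x$ and $y$, but taking care that the Grushin structure is not symmetric in the two variables: since the map $\Psi$ of~\eqref{eq:def_Phi_Psi} fixes the vertical coordinate $\eta = y$, symmetrizations in the $\eta$-direction in the transformed plane preserve the weighted measure $\M_\alpha$, whereas symmetrizations in the $\xi$-direction shift mass toward $\xi = 0$ (where the weight $w(\xi) = |(\alpha+1)\xi|^{-\alpha/(\alpha+1)}$ is larger) and hence \emph{increase} $\M_\alpha$; this is compatible with the final rescaling step, which can only scale down. First, assuming without loss of generality $P_\alpha(E^{+y}) \le P_\alpha(E^{-y})$, I define the $y$-reflected set $E^1 = \{(x,y) \in \R^2 : (x, |y|) \in E^{+y}\} \in \A^y(v) \cap \S_y$, and the inequality $\P_\alpha^y(E^1) \le \P_\alpha^y(E)$ follows from the same De Giorgi-based computation as in Step~1 of Theorem~\ref{th:reduction}, carried out in the transformed plane $F^1 = \Psi(E^1)$ where $P_\alpha$ becomes Euclidean.

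Next, I Steiner-symmetrize $F^1$ in the $\xi$-direction, producing $F^2$ that is $\xi$-Schwarz symmetric and $\eta$-symmetric. By~\cite{CCF05}*{Theorem~1.1}, both $P(F^2) \le P(F^1)$ and $P(F^{2,\pm\eta}) \le P(F^{1,\pm\eta})$, while the Hardy--Littlewood inequality applied slicewise (since $w$ is even and decreasing in $|\xi|$) yields $\M_\alpha(F^{2,\pm\eta}) \ge \M_\alpha(F^{1,\pm\eta})$. Setting $E^2 = \Phi(F^2) \in \S_x^* \cap \S_y$, I obtain $\P_\alpha^y(E^2) \le \P_\alpha^y(E^1)$ and $\leb^2(E^{2,\pm y}) \ge v$, with strict inequalities (in the case $\alpha > 0$) unless $F^{1,\pm\eta}$ were already $\xi$-Schwarz symmetric. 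Then I apply an analogue of the rearrangement of Section~\ref{sec:rearrangement} to $F^{2,+\eta}$, with the two axes swapped: the construction transfers because the polyhedral approximation of Lemma~\ref{lemma:polyhedral_approx_y-Steiner} and the elementary inequalities of Lemma~\ref{lemma:elem_ineq} underlying Theorem~\ref{th:rearrang} are coordinate-agnostic. This produces $F^{3,+\eta}$, still $\xi$-Schwarz symmetric but now also $\eta$-convex, with $\M_\alpha$ non-decreased and both $P(F^{3,+\eta})$ and its trace on $\{\eta > 0\}$ non-increased.

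At this stage $F^{3,+\eta}$ is $\xi$-Schwarz symmetric, $\eta$-convex, has finite $\M_\alpha$-measure, and by the $x/y$-swapped version of Proposition~\ref{prop:regularity_boundary_rearrang} is bounded with piecewise Lipschitz boundary. Following Step~4 of Theorem~\ref{th:reduction}, I take its convex hull $F^{4,+\eta} = \co(F^{3,+\eta})$, which by~\cite{MM04}*{Theorem~3.1} is convex, $\xi$-Schwarz symmetric, and satisfies $P(F^{4,+\eta}) \le P(F^{3,+\eta})$ and $\M_\alpha(F^{4,+\eta}) \ge \M_\alpha(F^{3,+\eta})$; if the resulting set does not touch $\{\eta = 0\}$, I translate the upper and lower halves toward the $\xi$-axis to create a contact interface, which preserves the individual perimeters and $\M_\alpha$-measures but strictly decreases $\P_\alpha^y$ by twice the interface length. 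The outcome has the form $\{|\xi| \le h(\eta),\ 0 \le \eta \le r\}$ for some concave $h : [0,r] \to [0,\infty)$, which pulls back via $\Phi$ to $\{|x| \le g(y),\ 0 \le y \le r\}$ with $g(y) = ((\alpha+1)h(y))^{1/(\alpha+1)} \in C([0,r]) \cap \Lip_{loc}(]0,r[)$. A final dilation $\tilde{E} = \delta_\lambda(\cdot)$ with $\lambda \in (0,1]$ restores $\leb^2(\tilde{E}^{\pm y}) = v$, and the equality characterisation for $\alpha > 0$ follows by tracking when all intermediate inequalities are equalities, exactly as in Theorem~\ref{th:reduction}. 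The main obstacle is verifying the adaptation of Section~\ref{sec:rearrangement}'s rearrangement with swapped axes (Step~3), but this is essentially a bookkeeping exercise since every ingredient is built purely from Euclidean slice geometry in $\R^2$.
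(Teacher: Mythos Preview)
Your proposal is correct and follows essentially the same approach as the paper: $y$-symmetrization, Steiner symmetrization in the $\xi$-direction in the transformed plane (which increases $\M_\alpha$ by the decreasing-weight argument you cite), the axis-swapped version of the rearrangement of Section~\ref{sec:rearrangement} (which, as the paper notes via Tonelli, actually \emph{preserves} $\M_\alpha$ exactly, not merely non-decreases it, since the weight is independent of $\eta$), convexification, vertical translation to create contact, and final rescaling. The only imprecision is the phrase ``strictly decreases $\P_\alpha^y$ by twice the interface length'' in Step~4---the decrease in $\P^\eta$ is exactly the interface length, not twice it---but this does not affect the argument.
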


\begin{proof}
The proof is similar to the one of Theorem~\ref{th:reduction} and we just sketch it. To avoid heavy notation, during the proof we will omit the $y$-superscript for the two sets $E^{\pm y}$ defined in~\eqref{eq:horizontal_pm}. 

\medskip

\textit{Step~1: $y$-symmetrization}. It is not restrictive to assume that $P_\alpha(E^+)\le P_\alpha(E^-)$ (otherwise, we can reflect $E$ with respect to the $x$-axis). We thus define
\begin{equation*}
E^1=\set*{(x,y)\in\R^2: (x,|y|)\in E^+}.
\end{equation*}
Clearly, $E^1\in\A^y(v)$ is $y$-symmetric. As in \textit{step~1} of the proof of Theorem~\ref{th:reduction}, one can prove that $\P_\alpha^x(E^1)\le\P_\alpha^x(E)$.

\medskip

\textit{Step~2: horizontal Steiner symmetrization}. Let $F^1=\Psi(E^1)$, where~$\Phi$ is as in~\eqref{eq:def_Phi_Psi}, and let $F^2$ be the Steiner symmetrization of $F^1$ in the $\xi$-direction. Precisely, recalling the notation introduced in~\eqref{eq:sections_notation},
\begin{equation*}
F^2=\set*{(\xi,t)\in\R^2 : |\xi|<\frac{\leb^1((F^1)^\eta_t)}{2}}.
\end{equation*}
Clearly, $F^{2,+}$ is the Steiner symmetrization of $F^{1,+}$ in the $\xi$-direction. Arguing as in \textit{step~2} of the proof of Theorem~\ref{th:reduction}, we have $\P^\eta(F^2)\le\P^\eta(F^1)$. Moreover, if $\P^\eta(F^2)<\P^\eta(F^1)$, then $F^1$ (or, equivalently, $F^{1,+}$) is not equivalent to an $\xi$-convex set. Arguing as in the proof of~\cite{MM04}*{Theorem~3.1}, we also have $\M_\alpha(F^{1,+})\le\M_\alpha(F^{2,+})$ with equality if and only if $F^{1,+}$ is equivalent to a $\xi$-convex set. We define $E^2=\Phi(F^2)$, where~$\Phi$ is as in~\eqref{eq:def_Phi_Psi}. By construction, we get that $E^2\in\A^x(v)\cap\S_x^*\cap\S_y$ satisfies $\P_\alpha^y(E^2)\le\P_\alpha^y(E^1)$ and $\leb^2(E^2)\ge\leb^2(E^1)$ with strict inequality if $E^1$ is not equivalent to a $x$-convex set.

\medskip

\textit{Step~3: horizontal rearrangement}. Let $F^2=\Psi(E^2)$,  where~$\Phi$ is as in~\eqref{eq:def_Phi_Psi}. Note that $F^{2,+}\subset\set*{\eta\ge0}$ is such that $\M_\alpha(F^{2,+})<+\infty$, $P(F^{2,+})<+\infty$ and $F^{2,+}\in\S_\xi^*$. As in \textit{step~3} of the proof of Theorem~\ref{th:reduction}, we have $\leb^2(F_2)<+\infty$. Up to perform a rotation of 90 degrees, we can thus apply Theorem~\ref{th:rearrang} to the set $F^{2,+}$. We define
\begin{equation*}
F^3=\set*{(\xi,\eta)\in\R^2 : (\xi,|\eta|)\in F^{3,+}}
\end{equation*}
where $F^{3,+}=(F^{2,+})^\bigstar$. We thus find that $F^{3,+}\in\S_\eta^*$ is $\eta$-convex, $\leb^2(F^{3,+})=\leb^2(F^{2,+})$ and
\begin{equation*}
\min\set*{P(F^{2,+})-P(F^{3,+}),P(F^{2,+};\set{\eta>0})-P(F^{3,+};\set{\eta>0})}\ge0,
\end{equation*}
with equality if and only if $F^{2,+}$ is equivalent to a $\eta$-convex set. Therefore $\P^\eta(F^3)\le\P^\eta(F^2)$, with equality if and only if $F^{2,+}$ is equivalent to a $\eta$-convex set. By Tonelli's theorem, we also have that $\M_\alpha(F^{3,+})=\M_\alpha(F^{2,+})$. We thus set $E^3=\Phi(F^3)$. We have that $E^3\in\A^x(v)\cap\S_x^*\cap\S_y$ is such that $E^{3,+}$ is $y$-convex and satisfies $\leb^2(E^3)=\leb^2(E^2)$ and $\P_\alpha^y(E^3)\le\P_\alpha^y(E^2)$, with strict inequality if and only if $E^{2,+}$ is not equivalent to a $y$-convex set. 

We can now conclude as in \textit{step~4} and \textit{step~5} of the proof of Theorem~\ref{th:reduction} with minor modifications. We leave the details to the reader.
\end{proof}

\subsubsection{Existence of minimal double bubbles with horizontal interface}

We are now ready to prove the existence of minimizers to problem~\eqref{eq:minpart_y}.

\begin{theorem}[Existence of minimizers with horizontal interface]\label{th:existence_y}
Let $\alpha\ge0$ and fix $v>0$. There exists a solution $E^*\in\A_y(v)\cap\S^*_x\cap\S_y$, bounded and $y$-transformed-convex, to the minimal partition problem~\eqref{eq:minpart_y}. Moreover, there exist $r\in]0,+\infty[$ and $g\in C([0,r])\cap\Lip_{loc}(]0,r[)$ such that
\begin{equation*}
E^*=\set*{(x,y)\in\R^2 : |x|\le g(|y|),\ |y|\le r}.
\end{equation*}
\end{theorem}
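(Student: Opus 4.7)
My plan is to mirror the structure of the proof of Theorem~\ref{th:existence}, substituting the Reduction result Theorem~\ref{th:reduction_y} for its vertical counterpart Theorem~\ref{th:reduction}, and replacing each use of the representation formula~\eqref{eq:perim_from_x_to_y} by~\eqref{eq:perim_from_y_to_x}. I would apply the direct method of the calculus of variations on the restricted admissible class
\begin{equation*}
\mathcal{B}^y(v)=\set*{E\in\A^y(v) : E\in\S_x^*\cap\S_y\ \text{bounded, $y$-transformed-convex, as in~\eqref{eq:reduction_profile_y}}},
\end{equation*}
which is non-empty and for which Theorem~\ref{th:reduction_y} guarantees $\inf_{\mathcal{B}^y(v)}\P_\alpha^y=\inf_{\A^y(v)}\P_\alpha^y$. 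The isoperimetric inequality~\eqref{eq:isop_ineq} applied to $E^{+y}$ immediately bounds $\P_\alpha^y(E)\le k$ for some $k=k(\alpha,v)\in]0,+\infty[$ for every admissible competitor $E\in\mathcal{B}^y(v)$ with $\P_\alpha^y(E)$ within a fixed threshold above the infimum.

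Next I would establish a uniform confinement of $\mathcal{B}^y(v)$ in a rectangle $[-a,a]\times[-b,b]$ depending only on $\alpha$ and $v$. Writing $E=\set*{(x,y) : |x|\le g(|y|),\ |y|\le r}$, the representation formula~\eqref{eq:perim_from_y_to_x} applied to each lateral graph of $E^{+y}$ yields
\begin{equation*}
k\ge P_\alpha(E^{+y})\ge 2\int_0^r\sqrt{1+g(y)^{2\alpha}g'(y)^2}\ dy\ge 2r,
\end{equation*}
so that the $y$-extent is bounded by $r\le k/2$. For the $x$-extent I would pass to the transformed plane: since $E$ is $y$-transformed-convex, $F^{+\eta}=\Psi(E^{+y})$ is a bounded open convex set in $\R^2$ with Euclidean perimeter equal to $P_\alpha(E^{+y})\le k$, so a standard convexity argument (the Euclidean perimeter of a planar convex body is at least twice its diameter) bounds the $\xi$-diameter of $F^{+\eta}$ by $k/2$. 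Pulling back by $\Phi$ converts this into an $\alpha$-dependent bound $|x|\le a$ on $E^{+y}$, as desired.

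Once uniform confinement in $[-a,a]\times[-b,b]$ is established, I would take a minimizing sequence $(E_h)\subset\mathcal{B}^y(v)$ with $\P_\alpha^y(E_h)\to C_{MP}^y:=\inf_{\A^y(v)}\P_\alpha^y>0$ (positivity again from~\eqref{eq:isop_ineq}) and transfer to the transformed plane via $F_h=\Psi(E_h)$, so that the $F_h$ are uniformly supported in a common bounded set $\Psi([-a,a]\times[-b,b])$ with uniformly bounded Euclidean perimeters. The $BV$ compact embedding into $L^1_{\rm loc}$ yields a subsequence $\chi_{F_h}\to\chi_F$ in $L^1$; setting $E^*=\Phi(F)$ gives $\chi_{E_h}\to\chi_{E^*}$ in $L^1$, hence $E^*\in\A^y(v)$. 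By closedness of the reduced class under $L^1$ convergence (up to negligible modifications) we obtain $E^*\in\mathcal{B}^y(v)$, in particular bounded, $y$-transformed-convex and of the form~\eqref{eq:reduction_profile_y} for some $r\in]0,+\infty[$ and $g\in C([0,r])\cap\Lip_{\rm loc}(]0,r[)$.

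The conclusion follows by lower semicontinuity of~$P_\alpha$, which gives $\P_\alpha^y(E^*)\le\liminf_{h\to+\infty}\P_\alpha^y(E_h)=C_{MP}^y$. The only step where some care is needed, and what I would consider the main obstacle, is the uniform $x$-confinement: unlike the vertical case in Theorem~\ref{th:existence}, where the weight $|x|^\alpha$ in~\eqref{eq:perim_from_x_to_y} directly controls the weighted direction from the perimeter bound, here the $x$-direction carries the weight but the graph is written over~$y$, so one cannot extract the $x$-bound from~\eqref{eq:perim_from_y_to_x} alone and must crucially invoke the transformed-convexity provided by Theorem~\ref{th:reduction_y} to close the argument.
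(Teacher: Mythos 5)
Your proposal is correct and follows essentially the same route as the paper: reduction to $\mathcal{B}^y(v)$ via Theorem~\ref{th:reduction_y}, the bound $2r_E\le k$ from~\eqref{eq:perim_from_y_to_x}, uniform horizontal confinement obtained by applying a convexity lower bound on the Euclidean perimeter of $\Psi(E^{+y})$ in the transformed plane and pulling back by $\Phi$, and then the direct method with $BV$ compactness and lower semicontinuity. Your closing observation that the $x$-confinement cannot come from~\eqref{eq:perim_from_y_to_x} alone and requires the transformed convexity is precisely the point the paper's argument exploits.
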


\begin{proof}
The proof is similar to the one of Theorem~\ref{th:existence}. By Theorem~\ref{th:reduction_y}, the class of admissible sets can be restricted to
\begin{equation*}
\mathcal{B}^y(v)=\set*{E\in\A^y(v) : E\in\S_x^*\cap\S_y\ \text{bounded, $y$-transformed-convex, as in~\eqref{eq:reduction_profile_y}}}.
\end{equation*}
By the isoperimetric inequality~\eqref{eq:isop_ineq}, for any $E\in\mathcal{B}^y(v)$ we have that $\P_\alpha^y(E)\le k$ for some constant $k=k(\alpha,v)\in]0,+\infty[$ depending only on~$\alpha\ge0$ and~$v>0$. 

We claim that any $E\in\mathcal{B}^y(v)$ is contained in the rectangle $[-a,a]\times[-b,b]$ for some $a,b\in]0,+\infty[$ depending only on $\alpha,v>0$. Fix $E\in\mathcal{B}^y(v)$. By Theorem~\eqref{th:reduction_y}, there exist $r_E\in]0,+\infty[$ and $g_E\in C([0,r_E])\cap\Lip_{loc}(]0,r_E[)$ such that 
\begin{equation*}
E=\set*{(x,y)\in\R^2 : |x|\le g_E(|y|),\ |y|\le r_E}.
\end{equation*}
Thus, by the representation formula~\eqref{eq:perim_from_y_to_x}, we get
\begin{equation*}
k\ge P_\alpha(E^{+y})
\ge 2\int_0^{r_E}\sqrt{1+g_E(y)^{2\alpha}g_E'(y)^2}\ dy
\ge2r_E
\end{equation*}
and thus $2b\le k$. In particular, the convex set $F^{+\eta}=\Phi(E^{+y})$ is bounded and contained in $\R\times[0,\bar{b}]$ for some $\bar{b}$ depending on~$b$ and~$\alpha$. Let
\begin{equation*}
\bar{a}:=\max\set*{\leb^1(F^\xi_t) : t\in[0,\bar{b}]}.
\end{equation*}
Then, by convexity, we get $k\ge P(F^{+\eta})\ge\sqrt{2\bar{a}^2+\bar{b}^2}$, which immediately implies that~$a$ depends only on~$\alpha,v>0$. The proof can now be concluded similarly to the one of Theorem~\eqref{th:existence}, with minor modifications. We leave the details to the reader.   
\end{proof}

\subsection{Characterization of minimizers with horizontal interface}
\label{sec:characterization_horizontal}

\subsubsection{Regular minimizers}
\label{sss:regular_y}
In this section, we solve the minimal partition problem~\eqref{eq:minpart_y}. In Theorem~\ref{th:existence_y}, we proved the existence of a minimizer $E\in\A^y(v)\cap\S_x\cap\S_y^*$ that is bounded, $y$-transformed-convex and of the form
\begin{equation}\label{eq:regular_min_y}
E=\set*{(x,y)\in\R^2 : |x|\le g(|y|),\ |y|\le r}
\end{equation}
for some \emph{$y$-profile function} $g\in C([0,r])\cap\Lip_{loc}(]0,r[)$ with $r\in]0,+\infty[$. We call such a  minimizer a \emph{$y$-regular minimizer}. In case of a $y$-regular minimizer~$E$, the functional~\eqref{eq:Pa_y} takes the form
\begin{equation}\label{eq:mP_v_y}
\P_\alpha^y(E)=2\int_0^{g(0)} x^\alpha\ dx+4\int_0^{g(r)} x^\alpha\ dx+4\int_0^r\sqrt{1+g(y)^{2\alpha}g'(y)^2}\ dy.
\end{equation}

By Theorem~\ref{th:reduction_y}, any minimizer $E\in\A^y(v)\cap\S_y$ of problem~\eqref{eq:minpart_y} is a $y$-regular minimizer. By Lemma~\ref{lemma:min_are_y-simm} below, if we prove that there exists a unique $y$-symmetric minimizer up to vertical translations, then this must be the unique minimizer of problem~\eqref{eq:minpart_y} up to vertical translations. The proof of Lemma~\ref{lemma:min_are_y-simm} is analogous to the one of Lemma~\ref{lemma:min_are_x-simm} and we omit it.

\begin{lemma}\label{lemma:min_are_y-simm}
Assume that problem~\eqref{eq:minpart_y} has a unique $y$-symmetric minimizer $E_0\in\A^y(v)\cap\S_y$ up to vertical translations. Then $E_0$ is the unique minimizer of problem~\eqref{eq:minpart_y} up to vertical translations. 
\end{lemma}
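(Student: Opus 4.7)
The proof will parallel that of Lemma~\ref{lemma:min_are_x-simm} line for line, with $x$ and $y$ interchanged. Given any minimizer $E\in\A^y(v)$ of problem~\eqref{eq:minpart_y}, the first step is to reduce to the case $P_\alpha(E^{+y})\le P_\alpha(E^{-y})$ by reflecting across the $x$-axis, an operation that preserves $\A^y(v)$, $\P_\alpha^y$ and the hypothesis on $E_0$. One then considers the $y$-symmetrization
\begin{equation*}
E^*=\set*{(x,y)\in\R^2 : (x,|y|)\in E^{+y}},
\end{equation*}
which manifestly lies in $\A^y(v)\cap\S_y$, and aims to show $\P_\alpha^y(E^*)\le\P_\alpha^y(E)$.

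The key ingredient is the decomposition
\begin{equation*}
P_\alpha(F)=P_\alpha(F^{+y})+P_\alpha(F^{-y})-2\,c(F),
\end{equation*}
where $c(F)=\int_{\de^*F^{+y}\cap\de^*F^{-y}\cap\{y=0\}}|x|^\alpha\,d\haus^1$ is the weighted length of the shared interface on the $x$-axis, obtained from~\eqref{eq:repres_formula} and De~Giorgi's structure theorem applied to the pieces of the reduced boundary in $\{y>0\}$, $\{y<0\}$ and $\{y=0\}$. Setting $a=\int_{\de^*E^{+y}\cap\{y=0\}}|x|^\alpha\,d\haus^1$, one has $c(E)\le a$ and, by reflection, $c(E^*)=a$, so a short computation reduces the desired inequality to
\begin{equation*}
\P_\alpha^y(E)-\P_\alpha^y(E^*)=\bigl(P_\alpha(E^{-y})-P_\alpha(E^{+y})\bigr)+\bigl(a-c(E)\bigr)\ge 0.
\end{equation*}

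Minimality of $E$ then forces equality throughout, which in particular gives $P_\alpha(E^{+y})=P_\alpha(E^{-y})$ and makes $E^*$ itself a minimizer. Since $E^*\in\S_y$, the uniqueness hypothesis yields $E^*=E_0$ up to vertical translations, hence $E^{+y}=E_0^{+y}$ up to vertical translations. Now that the two $\alpha$-perimeters agree, the same construction carried out from below, replacing $E^{+y}$ by $E^{-y}$ in the definition of $E^*$, produces $E^{-y}=E_0^{-y}$ up to vertical translations, and the $y$-symmetry of $E_0$ closes the argument: $E=E_0$ up to vertical translations.

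The step I expect to require the most care, though it is ultimately routine, is the decomposition above and the verification of $\P_\alpha^y(E^*)\le\P_\alpha^y(E)$. Everything becomes transparent after passing to the transformed plane $(\R^2,P,\M_\alpha)$ via~\eqref{eq:def_Phi_Psi}, where $P_\alpha$ collapses to the Euclidean perimeter and the $x$-axis maps to the $\xi$-axis, so the identity reduces to the classical reduced-boundary splitting of a Caccioppoli set across a hyperplane that is already used implicitly in the proof of Lemma~\ref{lemma:min_are_x-simm}.
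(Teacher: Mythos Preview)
Your proposal is correct and follows essentially the same approach as the paper, which simply states that the proof is ``analogous to the one of Lemma~\ref{lemma:min_are_x-simm}'' and omits it. Your account is in fact more detailed than the paper's own proof of Lemma~\ref{lemma:min_are_x-simm}: you make explicit the perimeter decomposition across the axis and the reason why the equality case forces $P_\alpha(E^{+y})=P_\alpha(E^{-y})$, which in the paper is left to the reader via the reference to Step~1 of Theorem~\ref{th:reduction}. One small remark: since both $E^*$ and $E_0$ are $y$-symmetric and bounded, the phrase ``$E^*=E_0$ up to vertical translations'' in fact forces $E^*=E_0$ on the nose, so the translations $h_1$, $h_2$ arising for $E^{+y}$ and $E^{-y}$ are automatically zero and the final ``gluing'' step is immediate; this makes the $y$-case slightly cleaner than the $x$-case.
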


\subsubsection{Characterization and examples}

We are now ready for the main result of this section. In Theorem~\ref{th:charact_reg_min_y} below, we prove that, given $\alpha\ge0$ and $v>0$, the $y$-regular minimizer of problem~\eqref{eq:minpart_y} is unique and has smooth boundary far from the $x$-axis. By Lemma~\ref{lemma:min_are_y-simm} and and Section \ref{sss:regular_y}, this is the unique minimizer of problem~\eqref{eq:minpart_y} up to vertical translations. 

\begin{theorem}[Characterization of minimal double bubbles with horizontal interface]\label{th:charact_reg_min_y}
Let $\alpha\ge0$, $v>0$ and let $E\in\mathcal A^y(v)\cap\S^*_x\cap\S_y$ be a $y$-regular minimizer of problem~\eqref{eq:minpart_y} for some $y$-profile function $g\in C([0,r])\cap\Lip_{loc}(]0,r[)$ with $r\in]0,+\infty[$. Then $E$ is unique and is given by
\begin{equation*}
E=\delta_{\frac{1}{h}}\left(\set*{(x,y)\in\R^2 : \left(x,|y|-\phi_\alpha\left(\tfrac{\sqrt{3}}{2}\right)\right)\in E_\alpha}\right),
\end{equation*}
where $\phi_\alpha\colon[0,1]\to[0,+\infty[$ is the isoperimetric profile defined in~\eqref{eq:isop_profile} of the Grushin isoperimetric set~$E_\alpha$ recalled in~\eqref{eq:isop_set} and 
\begin{equation}
\label{eq:th_expr_k_y}
h=\left[\frac{1}{v}\left(\leb^2(E_\alpha)-2\int_0^\frac{\sqrt{3}}{2}\frac{t^{\alpha+2}}{\sqrt{1-t^2}}\ dt\right)\right]^\frac{1}{\alpha+2}.
\end{equation}
The $y$-profile function of $E$ is given by
\begin{equation}\label{eq:profile_y}
g(y)=\tfrac{1}{h}\,\phi_\alpha^{-1}\left(\abs*{h^{\alpha+1}y-\phi_\alpha\left(\tfrac{\sqrt{3}}{2}\right)}\right)
\end{equation}
for all $y\in[0,r]$. In particular, $g\in C^\infty(]0,r[)$ and $r=h^{-(\alpha+1)}\left(r_\alpha+\phi_\alpha\left(\frac{\sqrt{3}}{2}\right)\right)$, where $r_\alpha=\phi_\alpha(0)$. Moreover, $g$ satisfies $g(0)>0$, $g(r)=0$, $g'(r)=-\infty$ and \mbox{$g(0)^\alpha g'(0)=\frac{1}{\sqrt{3}}$}. Finally, the minimum of problem~\eqref{eq:minpart_y} is given by
\begin{equation}\label{eq:th_expr_P_alpha_y}
\P_\alpha^y(E)=\frac{2}{h^{\alpha+1}}\left[P_\alpha(E_\alpha)+\int_0^\frac{\sqrt{3}}{2}t^\alpha\left(1-\frac{2}{\sqrt{1-t^2}}\right) dt\right].
\end{equation}
\end{theorem}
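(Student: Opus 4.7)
My plan is to follow the structure of the proof of Theorem~\ref{th:charact_reg_min}, with the first-order ODE for~$f$ replaced by a first integral of the Euler--Lagrange equation for~$g$. An interior first variation with $\psi\in C_c^\infty(]0,r[)$ satisfying $\int_0^r\psi\,dy=0$ (to preserve $v=2\int_0^r g\,dy$) in~\eqref{eq:mP_v_y} produces the Euler--Lagrange equation
\begin{equation*}
\frac{\alpha g^{2\alpha-1}(g')^2}{\sqrt{1+g^{2\alpha}(g')^2}}-\frac{d}{dy}\!\left(\frac{g^{2\alpha}g'}{\sqrt{1+g^{2\alpha}(g')^2}}\right)=\mu
\end{equation*}
for some Lagrange multiplier~$\mu$. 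Since $L(g,g')=\sqrt{1+g^{2\alpha}(g')^2}$ is autonomous in~$y$, the Beltrami identity applied to $L-\mu g$ furnishes the first integral
\begin{equation}\label{eq:plan_firstint}
\frac{1}{\sqrt{1+g^{2\alpha}(g')^2}}=\mu g+C,\qquad C\in\R.
\end{equation}
The regularity theory of $\Lambda$-minimizers of perimeter gives $g(r)=0$ and $g'(r)=-\infty$; analyzing the decay rates $g\sim c(r-y)^\beta$ at $y=r$ compatible with~\eqref{eq:plan_firstint} and with the $C^{\alpha+1}$ boundary regularity around the $y$-axis (as for $\de E_\alpha$ in Section~\ref{subsec:grushin_isop_set}) forces $\beta=1/(\alpha+2)$, which makes the left-hand side of~\eqref{eq:plan_firstint} vanish as $y\to r$, hence $C=0$.

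Next, I would show $g(0)>0$ by the same contradiction argument as in \textit{Step~2} of Theorem~\ref{th:charact_reg_min}: under $g(0)=0$, the two sub-cases $g'(0)\in]0,+\infty]$ and $g'(0)=0$ are handled respectively by truncating or translating~$g$ near $y=0$, then rescaling to restore the volume, producing a competitor with strictly smaller $\P_\alpha^y$ and contradicting minimality. Once $g(0)>0$, perform the first variation with $\psi\in C^\infty([0,r])$ such that $\psi(r)=0$, $\int_0^r\psi=0$, and $\psi(0)\ne0$. Integration by parts, with the interior EL equation annihilating the bulk integral, leaves
\begin{equation*}
0=\left(2g(0)^\alpha-\frac{4g(0)^{2\alpha}g'(0)}{\sqrt{1+g(0)^{2\alpha}g'(0)^2}}\right)\psi(0),
\end{equation*}
so $g(0)^\alpha g'(0)=1/\sqrt 3$ and $1/\sqrt{1+g(0)^{2\alpha}g'(0)^2}=\sqrt 3/2$; evaluating~\eqref{eq:plan_firstint} at $y=0$ then gives $\mu g(0)=\sqrt 3/2$.

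The ODE~\eqref{eq:plan_firstint} with $C=0$ is readily integrated. Rewriting it as $\abs{g^\alpha g'}=\sqrt{1-\mu^2 g^2}/(\mu g)$ and setting $w=\mu g$, it becomes
\begin{equation*}
\frac{w^{\alpha+1}\abs{w'}}{\sqrt{1-w^2}}=\mu^{\alpha+1},
\end{equation*}
which, in view of $\phi_\alpha'(x)=-x^{\alpha+1}/\sqrt{1-x^2}$, reads $\abs*{(d/dy)\phi_\alpha(w)}=\mu^{\alpha+1}$. Integrating with endpoint data $w(0)=\sqrt 3/2$ and $w(r)=0$, tracking signs on either side of the unique maximum point~$y_0$ at which $w(y_0)=1$, yields
\begin{equation*}
\phi_\alpha(w(y))=\abs*{\mu^{\alpha+1}y-\phi_\alpha(\sqrt 3/2)},
\end{equation*}
so $g(y)=\mu^{-1}\phi_\alpha^{-1}(\abs{\mu^{\alpha+1}y-\phi_\alpha(\sqrt 3/2)})$. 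Setting $h=\mu$, this is exactly~\eqref{eq:profile_y}, and $g(r)=0$ forces $r=(r_\alpha+\phi_\alpha(\sqrt 3/2))/h^{\alpha+1}$.

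Finally, to fix $h$ I would substitute $u=h^{\alpha+1}y-\phi_\alpha(\sqrt 3/2)$ in $v=2\int_0^r g\,dy$, reducing it to integrals of $\phi_\alpha^{-1}$ over $[0,\phi_\alpha(\sqrt 3/2)]$ and $[0,r_\alpha]$; a further substitution $u=\phi_\alpha(x)$, together with the Fubini identity $\int_0^{r_\alpha}\phi_\alpha^{-1}=\leb^2(E_\alpha)/4$, produces~\eqref{eq:th_expr_k_y}. Using $\sqrt{1+g^{2\alpha}(g')^2}=1/(hg)$ in~\eqref{eq:mP_v_y} reduces $\P_\alpha^y(E)$ to $2g(0)^{\alpha+1}/(\alpha+1)+(4/h)\int_0^r dy/g(y)$, and applying the same changes of variables together with $\int_0^{\pi/2}\sin^\alpha t\,dt=P_\alpha(E_\alpha)/4$ delivers~\eqref{eq:th_expr_P_alpha_y}. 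Uniqueness follows from the explicit formula plus Lemma~\ref{lemma:min_are_y-simm}. I expect the main technical obstacle to be the exclusion of $g(0)=0$, for which the vertical-case arguments transpose essentially verbatim; all subsequent steps are explicit once~\eqref{eq:plan_firstint} is established and $C=0$.
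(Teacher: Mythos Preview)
Your use of the Beltrami first integral $1/\sqrt{1+g^{2\alpha}(g')^2}=\mu g+C$ is a genuine simplification over the paper's argument. The paper never writes this identity down; instead it integrates the linear ODE for $G/g^\alpha$ with $G=g^{2\alpha}g'/\sqrt{1+g^{2\alpha}(g')^2}$ to show that $g'$ changes sign exactly once, proves symmetry about the maximum via Cauchy--Lipschitz, identifies the local shape near $(0,r)$ by a \emph{separate} first variation in the $x$-parametrization (recovering the isoperimetric profile there), and only then parametrizes the resulting one-parameter family of candidates by $\tau\in[0,1]$, picking out $\tau=\sqrt{3}/2$ from the angle condition. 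Your route---Beltrami, then direct integration of the separable ODE via $\phi_\alpha$---is shorter and more transparent once $C=0$ is in hand.

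The gap is precisely at $C=0$. The point $(0,r)$ lies on the $y$-axis, where the Grushin structure degenerates, so the regularity theory of $\Lambda$-minimizers does not directly yield $g(r)=0$ and $g'(r)=-\infty$; that theory only gives smoothness of $\de E$ away from $\{x=0\}$. Your decay-rate argument is circular: the exponent $\beta=1/(\alpha+2)$ is what the first integral with $C=0$ produces, not an input from which $C=0$ can be deduced, and the $C^{\alpha+1}$ regularity you invoke is a property of $\de E_\alpha$, not something known a priori for $E$. The paper fills this by using the $y$-transformed-convexity of $E$ to write $\de E$ near $(0,r)$ as a graph $y=f(x)$, performing a first variation in $f$ to obtain $f'(x)=\sgn(x)\,b|x|^{\alpha+1}/\sqrt{1-b^2x^2}$ for some $b<0$, and reading off $g(r)=0$, $g'(r)=-\infty$ from the inverse function. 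Plugging this local form into your first integral gives $1/\sqrt{1+g^{2\alpha}(g')^2}=|b|\,g$ near $r$, hence $C=0$ and $\mu=|b|$ at once. With this patch your plan goes through, and the subsequent identification of the profile is cleaner than the paper's $\tau$-parametrization.
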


\begin{proof}
We split the proof in several steps.

\medskip

\textit{Step~1: differential equation for the $y$-profile}.
We perform a first variation argument. Let $\psi\in C_c^\infty(]0,r[)$ be such that $\int_0^r\psi\ dy=0$. For $\eps\in\R$ small, consider the set 
\begin{equation*}
E_\eps=\set*{(x,y)\in\R^2 : |x|\le g(|y|)+\eps\psi(|y|),\ |y|\le r}.
\end{equation*}
Note that $E_\eps\in\A^y(v)$ for all $\eps\in\R$ small, since $g(y)>0$ for every $y\in]0,r[$ by definition.
Then, by~\eqref{eq:mP_v_y} and the minimality of $E$, after an integration by parts we find
\begin{equation*}
0=\frac{d\P_\alpha^y(E_\eps)}{d\eps}\bigg|_{\eps=0}
=4\int_0^r\left\{\frac{\alpha g(y)^{2\alpha-1}g'(y)^2}{\sqrt{1+g(y)^{2\alpha}g'(y)^2}}-\frac{d}{dy}\left(\frac{g(y)^{2\alpha}g'(y)}{\sqrt{1+g(y)^{2\alpha}g'(y)^2}}\right)\right\}\psi(y)\ dy.
\end{equation*}
Thus, by the fundamental lemma of the Calculus of Variations, there exists a constant $c\in\R$ such that
\begin{equation}\label{eq:def_k_y}
\frac{\alpha g(y)^{2\alpha-1}g'(y)^2}{\sqrt{1+g(y)^{2\alpha}g'(y)^2}}-\frac{d}{dy}\left(\frac{g(y)^{2\alpha}g'(y)}{\sqrt{1+g(y)^{2\alpha}g'(y)^2}}\right)=c
\end{equation}
for all $y\in]0,r[$. In addition, by the regularity theory of $\Lambda$-minimizers of perimeter, the boundary $\de E$ is smooth far from the $y$-axis. Therefore we must have $g\in C^\infty(]0,r[)$.

\medskip

\textit{Step~2: $x$-profile near the point $(x,y)=(0,r)$}. Since $E$ is $y$-transformed-convex, there exist $\delta,\eta>0$ and $f\in C([-\delta,\delta])\cap\Lip_{loc}(]-\delta,\delta[)$ such that
\begin{equation*}
\de E\cap\set*{(x,y)\in\R^2 : x\in[-\delta,\delta],\ y\in[r-\eta,r+\eta]}=\set*{(x,f(x))\in\R^2 : x\in[-\delta,\delta]}.
\end{equation*} 
Performing a first variation argument and arguing as in the proof of~\cite{MM04}*{Theorem~3.2}, we find a constant $b<0$ such that
\begin{equation}
f'(x)=\sgn(x)\,\frac{b|x|^{\alpha+1}}{\sqrt{1-b^2x^2}}
\end{equation} 
for all $x\in]-\delta,\delta[$. In particular, up to a dilation and a vertical translation, the function $f\colon[-\delta,\delta]\to[0,+\infty)$ conincides with the Grushin isoperimetric profile recalled in~\eqref{eq:isop_profile}.
For $x\in[0,\delta]$ and $y\in[f(\delta),r]$ we have $y=f(x)$ if and only if $ x=g(y)$. As a consequence, we must have
\begin{equation}\label{eq:g(r)_and_g'(r)}
g(r)=0,
\qquad
g'(r):=\lim_{y\to r}g'(y)=-\infty.
\end{equation}
Moreover, we have
\begin{equation}\label{eq:g'_north_pole}
g'(y)=(f^{-1})'(y)=\frac{1}{f'(g(y))}=\frac{\sqrt{1-b^2g(y)^2}}{b\,g(y)^{\alpha+1}}
\end{equation}
for all $y\in]f(\delta),r[$. Inserting~\eqref{eq:g'_north_pole} in~\eqref{eq:def_k_y}, we get $b=c$.

\medskip

\textit{Step~3: $g$ has a strict maximum in $]0,r[$}. 
Define $G\colon]0,r[\to\R$ by setting
\begin{equation}\label{eq:def_G}
G(y)=\frac{g(y)^{2\alpha}g'(y)}{\sqrt{1+g(y)^{2\alpha}g'(y)^2}}
\end{equation}
for all $y\in]0,r[$. The differential equation in~\eqref{eq:def_k_y} can be rewritten as
\begin{equation}\label{eq:ODE_G}
G'(y)-\alpha\frac{g'(y)}{g(y)}\,G(y)=-c
\end{equation}
for all $y\in]0,r[$. Fix $y_0\in]0,r[$. Then, integrating~\eqref{eq:ODE_G}, we have
\begin{equation}\label{eq:find_G}
G(y)=g(y)^\alpha\left(\frac{G(y_0)}{g(y_0)^\alpha}-c\int_{y_0}^y\frac{dt}{g(t)^\alpha}\right)
\end{equation}
for all $y\in]0,r[$. Combining~\eqref{eq:def_G} and~\eqref{eq:find_G}, we get
\begin{equation}\label{eq:sign_of_g'}
\frac{g(y)^{\alpha}g'(y)}{\sqrt{1+g(y)^{2\alpha}g'(y)^2}}=\frac{G(y_0)}{g(y_0)^\alpha}-c\int_{y_0}^y\frac{dt}{g(t)^\alpha}
\end{equation}
for all $y\in]0,r[$. Thus $g'$ can change sign at most one time on the interval $]0,r[$, because the function appearing on the right-hand side of~\eqref{eq:sign_of_g'} is strictly monotone since $g(y)>0$ for all $y\in]0,r[$. 

By contradiction, assume that $g'$ does not change sign on $]0,r[$, so that, by~\eqref{eq:g(r)_and_g'(r)}, $g'(y)<0$ for all $y\in]0,r[$. Therefore, by the implicit function theorem, we can extend the function $f\colon[-\delta,\delta]\to[0,+\infty[$ to the interval $[-g(0),g(0)]$. Note that we must have that $g(0)>0$, since $g(r)=0$ by~\eqref{eq:g(r)_and_g'(r)} and since we are assuming that $g'<0$ on $]0,r[$. We can thus repeat the argument contained in \textit{step~2} for all $x\in[0,g(0)]$ and $y\in[0,r]$ and deduce that
\begin{equation}\label{eq:g'_contradiction}
g'(0):=\lim_{y\to0}g'(y)
=\lim_{y\to0}\frac{\sqrt{1-c^2g(y)^2}}{c\,g(y)^{\alpha+1}}=\frac{\sqrt{1-c^2g(0)^2}}{c\,g(0)^{\alpha+1}}\le0
\end{equation}
passing to the limit in~\eqref{eq:g'_north_pole} as $y\to0$, because $c<0$ as we found in \textit{step~2}. 

We perform a first variation argument. Let $\psi\in C_c^\infty([0,r))$ be such that $\int_0^r\psi\ dy=0$. For $\eps\in\R$ small, consider the set 
\begin{equation*}
E_\eps=\set*{(x,y)\in\R^2 : |x|\le g(|y|)+\eps\psi(|y|),\ |y|\le r}.
\end{equation*}
Note that $E_\eps\in\A^y(v)$ for all $\eps\in\R$ small, since $g(y)>0$ for every $y\in[0,r[$.
Then, by~\eqref{eq:mP_v_y} and the minimality of $E$, we find
\begin{equation*}
0=\frac{d\P_\alpha^y(E_\eps)}{d\eps}\bigg|_{\eps=0}
=2\psi(0)g(0)^\alpha\left(1-\frac{2g(0)^\alpha g'(0)}{\sqrt{1+g(0)^{2\alpha}g'(0)^2}}\right).
\end{equation*}
By the arbitrariness of $\psi$, we deduce that
\begin{equation}\label{eq:new_d}
\frac{g(0)^\alpha g'(0)}{\sqrt{1+g(0)^{2\alpha}g'(0)^2}}=\frac{1}{2}.
\end{equation}
Therefore we must have $g'(0)>0$, contradicting~\eqref{eq:g'_contradiction}.

We conclude that $g'$ must change sign exactly one time on the interval $]0,r[$. So there must be a point $\hat{y}\in]0,r[$ such that $g'(\hat{y})=0$, $g'(y)>0$ for $y\in]0,\hat{y}[$ and $g'(y)<0$ for $y\in]\hat{y},r[$. In particular, $g$ has a strict maximum point at $y=\hat{y}$.

\medskip

\textit{Step~4: symmetry with respect to $\hat{y}$ and proof of $g(0)>0$}. 
We prove that $g$ is symmetric with respect to $\hat{y}$. Indeed, define $\hat{g}(y):=g(2\hat{y}-y)$ for all $y\in[2\hat{y}-r,2\hat{y}]\cap[0,r]$. Then $\hat{g}(\hat{y})=g(\hat{y})$ and $\hat{g}'(\hat{y})=-g'(\hat{y})=0$. A direct computation shows that $\hat{g}$ solves the differential equation found in~\eqref{eq:def_k_y} and the conclusion follows by the Cauchy-Lipschitz theorem. This proves that there is $\hat{\lambda}>0$ such that 
\begin{equation}\label{eq:shape_y*_lambda}
E=\set*{(x,y)\in\R^2 : (x,|y|-\hat{y})\in \delta_{\hat{\lambda}}(E_\alpha)},
\end{equation}
where $E_\alpha$ is the Grushin isoperimetric set recalled in~\eqref{eq:isop_set}. 

We now prove that $g(0)>0$. Assume $g(0)=0$ by contradiction. For any $\eps>0$ sufficiently small, we define $g_\eps\colon[0,r]\to[0,+\infty[$ by setting 
\begin{equation*}
g_\eps(y)=
\begin{cases}
g(y+\eps) & x\in[0,r-\eps[\\
0 & x\in[r-\eps,r].
\end{cases}
\end{equation*}
We then consider the set 
\begin{equation*}
E_\eps:=\set*{(x,y)\in\R^2 : |x|\le g_\eps(|y|),\ |y|\le r}.
\end{equation*}
Note that
\begin{equation*}
\P_\alpha^y(E_\eps)=\P_\alpha^y(E)+\frac{2g(\eps)^{\alpha+1}}{\alpha+1}-4\int_\eps^r\sqrt{1+g(y)^{2\alpha}g'(y)^2}\ dy
\end{equation*}
and
\begin{equation*}
\leb^2(E_\eps)=\leb^2(E)-4\int_0^\eps g(y)\ dy.
\end{equation*}
We thus define
\begin{equation*}
\lambda_\eps=\left(\frac{\leb^2(E)}{\leb^2(E_\eps)}\right)^\frac{1}{\alpha+2}>1
\qquad\text{and}\qquad
F_\eps=\delta_{\lambda_\eps}(E_\eps).
\end{equation*}
Then $F_\eps\in\A^y(v)$ and $\P_\alpha^y(F_\eps)=\lambda_\eps^{\alpha+1}\P_\alpha^y(E_\eps)$. We claim that $\P_\alpha^y(F_\eps)<\P_\alpha^y(E)$ for any $\eps>0$ sufficiently small. A direct computation gives
\begin{equation*}
\frac{d\P_\alpha^y(F_\eps)}{d\eps}\bigg|_{\eps=0}=-4
\end{equation*}
and the claim follows from the Taylor's expansion of the function $\eps\mapsto\P_\alpha^y(F_\eps)$. But this contradicts the minimality of $E$.

We conclude that $g(0)>0$. Therefore, we can repeat the first variation argument presented in \textit{step~3} and we have~\eqref{eq:new_d}. Simplifying, we get the necessary condition
\begin{equation}\label{eq:angle_constraint_y}
g(0)^\alpha g'(0)=\frac{1}{\sqrt{3}}.
\end{equation}  

\medskip

\textit{Step~5: characterization of the profile}. 
We introduce a family of sets $(F_\tau)_{\tau\in[0,1]}$ constructed as follows.
Let $\tau\in[0,1]$ and define
\begin{equation*}
E_\tau=\set*{(x,y)\in\R^2 : (x,|y|-\phi_\alpha(\tau))\in E_\alpha},
\end{equation*}
where $E_\alpha\subset\R^2$ is the Grushin isoperimetric set recalled in~\eqref{eq:isop_set}. Then we have
\begin{equation}
\label{eq:per_tau}
\P_\alpha^y(E_\tau)
=2P_\alpha(E_\alpha)+2\int_0^\tau x^\alpha\ dx-4\int_0^\tau\sqrt{x^{2\alpha}+\phi_\alpha'(x)^2}\ dx.
\end{equation}
and
\begin{equation}\label{eq:vol_tau}
\leb^2(E_\tau)
=2\leb^2(E_\alpha)-4\int_0^\tau\phi_\alpha(x)\ dx+4\tau\phi_\alpha(\tau).
\end{equation}
We thus define 
\begin{equation}\label{eq:def_F_tau}
F_\tau=\delta_{\lambda_\tau}(E_\tau)=\delta_{\lambda_\tau}\left(\set*{(x,y)\in\R^2 : (x,|y|-\phi_\alpha(\tau))\in E_\alpha}\right),
\end{equation}
where
\begin{equation}
\label{eq:lambda_tau}
\lambda_\tau=\left(\frac{2v}{\leb^2(E_\tau)}\right)^\frac{1}{\alpha+2}
\end{equation}
for all $\tau\in[0,1]$. By construction, for any $\tau\in[0,1]$ there exist $r_\tau>0$ and a smooth function $g_\tau\colon[0,r_\tau]\to[0,+\infty[$ such that
\begin{equation*}
F_\tau=\set*{(x,y)\in\R^2 : |x|\le g_\tau(|y|),\ |y|\le r_\tau}.
\end{equation*}
A straightforward computation gives $r_\tau=\lambda_\tau^{\alpha+1}(\phi_\alpha(0)+\phi_\alpha(\tau))$ and
\begin{equation}\label{eq:def_g_tau}
g_\tau(y)=\lambda_\tau\phi_\alpha^{-1}\left(\abs*{\lambda_\tau^{-(\alpha+1)}y-\phi_\alpha(\tau)}\right)
\end{equation}
for all $y\in[0,r_\tau]$. 

We claim that any minimizer of problem~\eqref{eq:minpart_y} of the form~\eqref{eq:shape_y*_lambda} can be written in the form~\eqref{eq:def_F_tau} for some $\tau\in[0,1]$ depending on $\hat{y},\hat{\lambda}>0$. Indeed, the set in~\eqref{eq:shape_y*_lambda} can be rewritten as
\begin{equation*}
E=\delta_{\hat{\lambda}}\left(\set*{(x,y)\in\R^2 : \left(x,|y|-\hat{y}\hat{\lambda}^{-(\alpha+1)}\right)\in E_\alpha}\right).
\end{equation*} 
Therefore, recalling that $E$ is a $y$-regular minimizer as in~\eqref{eq:regular_min_y}, we must have
\begin{equation*}
\set*{(x,y)\in\R^2 : \hat{\lambda}|x|\le g\left(\hat{\lambda}^{(\alpha+1)}|y|\right),\ |y|\le r}
=\set*{(x,y)\in\R^2 : \left(x,|y|-\hat{y}\hat{\lambda}^{-(\alpha+1)}\right)\in E_\alpha}
\end{equation*}
and so $(\hat{\lambda}^{-1}g(\hat{\lambda}^{(\alpha+1)}y),y-\hat{y}\hat{\lambda}^{-(\alpha+1)})\in\de E_\alpha$ for all $y\in[0,r]$. Thus we must have that $\abs{y-\hat{y}\hat{\lambda}^{-(\alpha+1)}}=\phi_\alpha(\hat{\lambda}^{-1}g(\hat{\lambda}^{(\alpha+1)}y))$ for all $y\in[0,r]$, from which we deduce that $g(y)=\hat{\lambda}\phi_\alpha^{-1}\left(\lambda^{-\alpha+1}\abs*{y-\hat{y}}\right)$ for all $y\in[0,r]$. Comparing this function with the one in~\eqref{eq:def_g_tau}, we conclude that $\tau=\phi_\alpha^{-1}(\hat{y}\hat{\lambda}^{-(\alpha+1)})$ and the claim follows.

We now look for the values of $\tau\in[0,1]$ such that the $y$-profile $g_\tau$ satisfies the necessary condition~\eqref{eq:angle_constraint_y}. Starting from~\eqref{eq:def_g_tau}, a direct computation gives
\begin{equation*}
g_\tau(0)=\tau\lambda_\tau 
\qquad\text{and}\qquad
g_\tau'(0)=\frac{\sqrt{1-\tau^2}}{\lambda_\tau^\alpha\,\tau^{\alpha+1}} 
\end{equation*} 
for all $\tau\in[0,1]$. Inserting these values in~\eqref{eq:angle_constraint_y} and simplifying, we obtain $\tau=\frac{\sqrt{3}}{2}$. Setting $h_\tau=\lambda^{-1}_\tau$, formulas~\eqref{eq:profile_y} and~\eqref{eq:th_expr_P_alpha_y} follow by~\eqref{eq:per_tau} and~\eqref{eq:def_g_tau} respectively. Note that, by~\eqref{eq:vol_tau} and~\eqref{eq:lambda_tau}, we get
\begin{align*}
h_\tau^{\alpha+2}v 
&=\leb^2(E_\alpha)-2\int_0^{\frac{\sqrt{3}}{2}}\int_x^1\frac{t^{\alpha+1}}{\sqrt{1-t^2}}\,dt\,dx+\sqrt{3}\,\varphi_\alpha\left(\tfrac{\sqrt{3}}{2}\right)\\
&=\leb^2(E_\alpha)-2\int_0^{\frac{\sqrt{3}}{2}}\int_0^t\frac{t^{\alpha+1}}{\sqrt{1-t^2}}\,dx\,dt-\sqrt{3}\int_{\frac{\sqrt{3}}{2}}^1\frac{t^{\alpha+1}}{\sqrt{1-t^2}}\,dt+\sqrt{3}\,\varphi_\alpha\left(\tfrac{\sqrt{3}}{2}\right)\\
&=\leb^2(E_\alpha)-2\int_0^\frac{\sqrt{3}}{2}\frac{t^{\alpha+2}}{\sqrt{1-t^2}}\, dt
\end{align*}
and~\eqref{eq:th_expr_k_y} follows immediately. This concludes the proof.
\end{proof}

\begin{figure}[h!]
\pgfmathsetmacro{\s}{0.7} 
\tikzset{
    mdbeuclid_y/.pic={
\draw [blue,thick,domain=-27:207] plot ({sqrt(5)*cos(\x)}, {1+sqrt(5)*sin(\x)});
\fill [blue,opacity=0.1,domain=-27:207] plot ({sqrt(5)*cos(\x)}, {1+sqrt(5)*sin(\x)});    }}
\begin{tikzpicture}[scale=\s]
\draw [thick,->] (-2.5,0) -- (2.5,0);
\draw [thick,->] (0,-3.75) -- (0,3.75);
\draw (2.5,0) node [right] {\small $x$};
\draw (0,3.75) node [above] {\small $y$};
\draw (0,1) node [left] {\tiny $\frac{1}{2k}$} node {\tiny\textbullet};
\draw (0,3.6) node [left] {\tiny $\frac{3}{2k}$};
\draw [blue,thick,dashed] (0,1) -- (2,0);

\pic [scale=\s] {mdbeuclid_y};
\pic [scale=\s,yscale=-1] {mdbeuclid_y};

\draw (2.25,2.2) node [right] {\small $g$};
\draw (0,0.4) node [below,right] {\small $\theta$};
\draw [domain=150:180] plot ({2+0.9*cos(\x)}, {0.9*sin(\x)});
\draw [red,thick,shift={(1.6,0)},rotate around={-30:(0.4,0)}] (0,0) -- (0,0.4) -- (0.4,0.4);
\draw [red,thick,shift={(1.6,0)},rotate around={-30:(0.4,0)}] (0.4,0) -- (0.4,1.25);
\draw (1.5,1) node [below,right] {$\frac{\pi}{2}$};

\end{tikzpicture}
\caption{The minimizer of problem~\eqref{eq:minpart_y} in the Euclidean case $\alpha=0$.}
\label{fig:dbeucl_y}
\end{figure}

\begin{example}[The Euclidean case]\label{ex:euclidean_y}
In the Euclidean case $\alpha=0$, the $y$-profile function defined in~\eqref{eq:th_expr_f_k} can be explicitly computed. Recalling~\eqref{eq:isop_profile}, we have
\begin{equation*}
\phi_0(x)=\int_{\arcsin x}^\frac{\pi}{2}\sin(t)\ dt=\cos\arcsin x=\sqrt{1-x^2}
\end{equation*}
for all $x\in[0,1]$. Thus
\begin{equation*}
g(y)=\frac{1}{h}\sqrt{1-\left(h y-\frac{1}{2}\right)^2}=\sqrt{\frac{1}{h^2}-\left(y-\frac{1}{2h}\right)^2}
\end{equation*}
for all $y\in[0,r]$, where $h=\sqrt{\frac{8\pi+3\sqrt{3}}{12v}}$ and $r=\frac{3}{2h}$ by~\eqref{eq:th_expr_k_y}. This is the profile function of a circle of radius $\frac{1}{h}$ and center $(\frac{1}{2h},0)$. In particular, we have $g'(0)=\frac{1}{\sqrt{3}}$ and the angle $\gamma=\frac{\pi}{2}+\theta$, with $\theta=\arctan g'(0)$, is given by
\begin{equation*}
\gamma
=\frac{\pi}{2}+\arctan\left(\frac{1}{\sqrt{3}}\right)
=\frac{2\pi}{3},
\end{equation*}
see Figure~\ref{fig:dbeucl_y}. Thanks to Theorem~\ref{th:charact_reg_min_y}, up to Euclidean translations, the unique minimizer of problem~\eqref{eq:minpart_y} for $\alpha=0$ is the symmetric Standard Double Bubble found in~\cite{FABHZ93}.
\end{example}

\begin{figure}[h!]
\pgfmathsetmacro{\s}{70}
\pgfmathsetmacro{\ss}{1.75}
\pgfmathsetmacro{\t}{exp(0.5*ln(3))/2} 
\pgfmathsetmacro{\l}{exp(ln(12/17)/3)} 
\pgfmathsetmacro{\pt}{(0.5)*(acos(\t)/180*pi+(\t)*sqrt(1-(\t)^2))} 

\tikzset{
    profile/.pic={
\draw [thick,blue] 
 plot 
      [domain=0:1, samples=\s]  
(\x,{(0.5)*(acos(\x)/180*pi+(\x)*sqrt(1-(\x)^2))+\pt}) -- (1,\pt);
\fill [opacity=0.1,blue] 
 plot 
      [domain=0:1, samples=\s]  
(\x,{(0.5)*(acos(\x)/180*pi+(\x)*sqrt(1-(\x)^2))+\pt}) -- (1,\pt) -- (0,\pt) -- cycle;
    }}

\tikzset{
    tau_profile/.pic={
\draw [thick,blue] 
 plot 
      [domain=\t:1, samples=\s]  
(\x,{-(0.5)*(acos(\x)/180*pi+(\x)*sqrt(1-(\x)^2))+\pt}) -- (1,\pt);
\fill [opacity=0.1,blue] 
 plot 
      [domain=\t:1, samples=\s]  
(\x,{-(0.5)*(acos(\x)/180*pi+(\x)*sqrt(1-(\x)^2))+\pt}) -- (1,\pt) -- (0,\pt) -- (0,0) -- cycle;
    }}

\tikzset{
    bubble_top/.pic={
\begin{scope}
\pic [scale=\ss,xscale=\l,yscale=\l*\l*\l]{profile};
\pic [scale=\ss,xscale=-\l,yscale=\l*\l*\l] {profile};
\pic [scale=\ss,xscale=\l,yscale=\l*\l*\l] {tau_profile};
\pic [scale=\ss,xscale=-\l,yscale=\l*\l*\l] {tau_profile};
\end{scope}
    }}
    
\tikzset{
    bubble_bottom/.pic={
\begin{scope}
\pic [scale=\ss,xscale=\l,yscale=-\l*\l*\l] {profile};
\pic [scale=\ss,xscale=-\l,yscale=-\l*\l*\l] {profile};
\pic [scale=\ss,xscale=\l,yscale=-\l*\l*\l] {tau_profile};
\pic [scale=\ss,xscale=-\l,yscale=-\l*\l*\l] {tau_profile};
\end{scope}
    }}
    
\begin{tikzpicture}[scale=\ss]
\pic {bubble_top};
\pic {bubble_bottom};
\draw [thick,->] (-1.25,0) -- (1.25,0);
\draw [thick,->] (0,-1.25) -- (0,1.25);
\draw (0,2.5*\l*\l*\pt) node [left] {\tiny $r$};
\draw (1.25,0) node [right] {\small $x$};
\draw (0,1.25) node [above] {\small $y$};
\draw (0,\l*\l*\pt) node [left] {\tiny $\hat{y}$};
\draw [dashed,thin] (0,\l*\l*\pt) -- (\l,\l*\l*\pt);
\draw (\l,\l*\l*\pt) node [right] {\tiny $g'(\hat{y})=0$};
\draw (0.8*\l,2*\l*\l*\pt) node [right] {\ $g$};
\end{tikzpicture}
\caption{The minimizer of problem~\eqref{eq:minpart_y} in the Grushin case $\alpha=1$.}
\label{fig:isop_bubble}
\end{figure}

\begin{example}[The Grushin case]\label{ex:grushin_y}
In the Grushin case $\alpha>0$, the profile function defined in~\eqref{eq:profile_y} cannot be explicitly computed. For $\alpha=1$, recalling~\eqref{eq:isop_profile}, we have 
\begin{equation*}
\phi_1(x)=\int_{\arcsin x}^\frac{\pi}{2}\sin^2(t)\ dt=\frac{1}{2}\left(\arccos x+x\sqrt{1-x^2}\right)
\end{equation*}
for all $x\in[0,1]$ and so $\phi_1(\frac{\sqrt{3}}{2})=\frac{2\pi+3\sqrt{3}}{24}$. In addition, we can explicitly compute 
\begin{equation}\label{eq:beta1}
h=\left[\frac{1}{v}\left(\leb^2(E_1)-2\int_0^\frac{\sqrt{3}}{2}\frac{t^3}{\sqrt{1-t^2}}\ dt\right)\right]^\frac{1}{3}
=\sqrt[3]{\frac{9}{4v}}.
\end{equation}
Thus
\begin{equation*}
g(y)=\tfrac{1}{h}\phi_1^{-1}\left(\abs*{h^2 y-\tfrac{2\pi+3\sqrt{3}}{24}}\right)
\end{equation*}
for all $y\in[0,r]$, where $r=\frac{8\pi+3\sqrt{3}}{24h^2}$. By~\eqref{eq:angle_constraint_y} we have $g'(0)=\frac{1}{g(0)\sqrt{3}}=\sqrt[3]{\frac{2}{3v}}$, so that the angle $\gamma=\frac{\pi}{2}+\theta$, with $\theta=\arctan g'(0)$, is given by
\begin{equation*}
\gamma=\frac{\pi}{2}+\arctan\left(\sqrt[3]{\frac{2}{3v}}\right),
\end{equation*}
see Figure~\ref{fig:isop_bubble}. In particular, $\gamma=\frac{2\pi}{3}$ if and only if $v=\frac{2}{\sqrt{3}}$.
\end{example} 

\subsubsection{Profile angle at the horizontal interface}
\label{sss:angle_y}

\begin{figure}[h!]

\pgfmathsetmacro{\s}{70}
\pgfmathsetmacro{\ss}{2.5}

\pgfmathsetmacro{\t}{exp(0.5*ln(3))/2} 
\pgfmathsetmacro{\l}{exp(ln(12/17)/3)} 
\pgfmathsetmacro{\pt}{(0.5)*(acos(\t)/180*pi+(\t)*sqrt(1-(\t)^2))} 

\tikzset{
    euclid_y/.pic={
\draw [blue,thick,domain=-27:90] plot ({sqrt(5)*cos(\x)}, {1+sqrt(5)*sin(\x)});
\fill [blue,opacity=0.1,domain=-27:90] plot ({sqrt(5)*cos(\x)}, {1+sqrt(5)*sin(\x)}) -- (0,0) -- cycle;    
}}

\tikzset{
    profile/.pic={
\draw [thick,blue] 
 plot 
      [domain=0:1, samples=\s]  
(\x,{(0.5)*(acos(\x)/180*pi+(\x)*sqrt(1-(\x)^2))+\pt}) -- (1,\pt);
\fill [opacity=0.1,blue] 
 plot 
      [domain=0:1, samples=\s]  
(\x,{(0.5)*(acos(\x)/180*pi+(\x)*sqrt(1-(\x)^2))+\pt}) -- (1,\pt) -- (0,\pt) -- cycle;
    }}

\tikzset{
    tau_profile/.pic={
\draw [thick,blue] 
 plot 
      [domain=\t:1, samples=\s]  
(\x,{-(0.5)*(acos(\x)/180*pi+(\x)*sqrt(1-(\x)^2))+\pt}) -- (1,\pt);
\fill [opacity=0.1,blue] 
 plot 
      [domain=\t:1, samples=\s]  
(\x,{-(0.5)*(acos(\x)/180*pi+(\x)*sqrt(1-(\x)^2))+\pt}) -- (1,\pt) -- (0,\pt) -- (0,0) -- cycle;
    }}

\tikzset{
    half_bubble_top/.pic={
\begin{scope}
\pic [scale=\ss,xscale=\l,yscale=\l*\l*\l]{profile};
\pic [scale=\ss,xscale=\l,yscale=\l*\l*\l] {tau_profile};
\end{scope}
    }}
    
\tikzset{
    profile_t/.pic={
\draw [thick,blue] 
 plot 
      [domain=0:1, samples=\s]  
({(\x*\x)/2},{(0.5)*(acos(\x)/180*pi+(\x)*sqrt(1-(\x)^2))+\pt}) -- (1,\pt);
\fill [opacity=0.1,blue] 
 plot 
      [domain=0:1, samples=\s]  
({(\x*\x)/2},{(0.5)*(acos(\x)/180*pi+(\x)*sqrt(1-(\x)^2))+\pt}) -- (1,\pt) -- (0,\pt) -- cycle;
    }}
    
\tikzset{
    tau_profile_t/.pic={
\draw [thick,blue] 
 plot 
      [domain=\t:1, samples=\s]  
({1/2+\x*\x/2},{-(0.5)*(acos(\x)/180*pi+(\x)*sqrt(1-(\x)^2))+\pt}) -- (1,\pt);
\fill [opacity=0.1,blue] 
 plot 
      [domain=\t:1, samples=\s]  
({1/2+\x*\x/2},{-(0.5)*(acos(\x)/180*pi+(\x)*sqrt(1-(\x)^2))+\pt}) -- (1,\pt) -- (0,\pt) -- (0,0) -- cycle;
    }}
    
\tikzset{
    half_bubble_top_t/.pic={
\begin{scope}
\pic [scale=\ss,xscale=\l,yscale=\l*\l*\l] {profile};
\pic [scale=\ss,xscale=\l,yscale=\l*\l*\l] {tau_profile_t};
\end{scope}
    }}

\begin{tikzpicture}
\begin{scope}[scale=.8]
	\pic [scale=.8] {euclid_y};
	\draw [thick,->] (-.5,0) -- (3,0);
	\draw [thick,->] (0,-.5) -- (0,4);
	\draw (3,0) node [right] {\small $x$};
	\draw (0,4) node [above] {\small $y$};
	\draw (2.4,1.5) node [right] {\small $g$};
	\draw [domain=180:60,,thick,red] plot ({2+0.5*cos(\x)}, {0.5*sin(\x)});
	\draw [thick,red] (2,0) -- (2.55,1);
	\draw (1.3,1.1) node {\small $\gamma=\frac{2\pi}{3}$};
	\draw (1.5,-.75) node {\small\bf (a)};
\end{scope}
\begin{scope}[shift={(4.5,0)},scale=\ss]
	\pic [scale=\ss] {half_bubble_top};
	\draw [thick,->] (-.15,0) -- (1.25,0);
	\draw [thick,->] (0,-.15) -- (0,1.25);
	\draw (1.25,0) node [right] {\small $x$};
	\draw (0,1.25) node [above] {\small $y$};
	\draw [domain=180:45,,thick,red] plot ({\l*\t+0.16*cos(\x)}, {0.16*sin(\x)});
	\draw [thick,red] (\l*\t,0) -- (1.07,.3);
	\draw (.4,.3) node {\small $\gamma=\frac{\pi}{2}+\vartheta$};
	\draw (.95,0.5) node [right] {\small $g$};
	\draw (1.25/2,-.25) node {\small\bf (b)};
\end{scope}
\begin{scope}[shift={(9.75,0)},scale=\ss]
	\pic [scale=\ss] {half_bubble_top_t};
	\draw [thick,->] (-.15,0) -- (1.25,0);
	\draw [thick,->] (0,-.15) -- (0,1.25);
	\draw (1.25,0) node [right] {\small $x$};
	\draw (0,1.25) node [above] {\small $y$};
	\draw [domain=180:60,,thick,red] plot ({0.8+0.16*cos(\x)}, {0.16*sin(\x)});
	\draw (.6,.3) node {\small $\gamma=\frac{2\pi}{3}$};
	\draw [thick,red] (.78,0) -- (1,.3);
	\draw (1.05,0.5) node {\small $\hat{g}$};
	\draw (1.25/2,-.25) node {\small\bf (c)};
\end{scope}
\end{tikzpicture}
\caption{The profile angle at the interface in problem~\eqref{eq:minpart_y}: \textbf{(a)} the Euclidean case $\alpha=0$; \textbf{(b)} the Grushin case $\alpha>0$; \textbf{(c)} the transformed Grushin case $\alpha>0$.}
\label{fig:angles_y}
\end{figure}

By Theorem~\ref{th:charact_reg_min_y}, for any $\alpha\ge0$ the $y$-profile function of the minimizer of problem~\eqref{eq:minpart_y} meets the horizontal interface at an angle 
\begin{equation*}
\gamma=\frac{\pi}{2}+\theta, \qquad \theta=\arctan g'(0),
\end{equation*}
see Figure~\ref{fig:angles_y}. In the Euclidean case $\alpha=0$, we have $\theta=\frac{\pi}{6}$, as we found in Example~\ref{ex:euclidean_y} accordingly to the well-known regularity theory. In the Grushin case $\alpha>0$, instead, we have 
\begin{equation*}
\theta
=\arctan\left(\frac{1}{g(0)^\alpha\sqrt{3}}\right)
=\arctan\left[\left(\frac{2h}{\sqrt{3}}\right)^\alpha\cdot\frac{1}{\sqrt{3}}\right].
\end{equation*}
However, if we transform the Grushin plane $(\R^2,P_\alpha,\leb^2)$ into the Euclidean plane with weighted volume $(\R^2,P,\M_\alpha)$ using the maps defined in~\eqref{eq:def_Phi_Psi}, then the set $F=\Psi(E)$ has $\eta$-profile function $\hat{g}\colon[0,r]\to[0,+\infty[$ given by
\begin{equation*}
\hat{g}(\eta)=\frac{g(\eta)^{\alpha+1}}{\alpha+1}, \qquad \eta\in[0,r],
\end{equation*}
where $g\colon[0,r]\to[0,+\infty[$ is the $y$-profile function of~$E$. An elementary computation shows that the profile angle at the interface in the transformed plane is given by $\hat{\gamma}=\frac{\pi}{2}+\hat{\theta}$, where
\begin{equation*}
\hat{\theta}=\arctan\hat{g}'(0)
=\arctan\left(g(0)^\alpha g'(0)\right)
=\arctan\left(\frac{1}{\sqrt{3}}\right)
=\frac{\pi}{6}.
\end{equation*}
In other words, the problem~\eqref{eq:minpart} reformulated in the transformed plane $(\R^2,P,\M_\alpha)$ has a unique minimizer consisting of two symmetric curves joining the vertical interface at two triple points with angles~$\frac{2\pi}{3}$.

\begin{remark}[Comparison of the two minimal bubbles for $\alpha=1$]\label{rem:comparison}
Having in mind the general problem~\eqref{eq:intro_bubble_cluster_problem} for $m=2$, we want to understand which of the two double bubbles characterized in Theorems~\ref{th:charact_reg_min} and~\ref{th:charact_reg_min_y} may be a candidate solution to the double bubble problem for equal areas in the Grushin plane $(\R^2,P_\alpha,\leb^2)$. Since the expressions of the values of problems~\eqref{eq:minpart} and~\eqref{eq:minpart_y} given in~\eqref{eq:th_expr_P_alpha} and~\eqref{eq:th_expr_P_alpha_y} are not easily comparable for an arbitrary $\alpha>0$, we restrict our analysis to the case $\alpha=1$. 

Besides its computational manageability, the case $\alpha=1$ is of particular interest since it is connected with the Heisenberg group $\mathbb{H}^1$. This is the framework of the famous Pansu's conjecture about the shape of isoperimetric sets, see \cite{P82}, which is still unsolved. Pansu's set can be obtained by rotating the set $E_\alpha$ in \eqref{eq:isop_set} for $\alpha=1$ around the vertical axis in~$\R^3$. Our analysis might give some insights on the candidate solutions to the double bubble problem in $\mathbb H^1$.

Now let $E_x$ and $E_y$ be the minimal bubbles given by Theorems~\ref{th:charact_reg_min} and~\ref{th:charact_reg_min_y} respectively. From \eqref{eq:th_expr_P_alpha} and  Example~\ref{ex:grushin_x} we get 
\begin{equation*}
\mathcal P_1(E_x)=\mathcal P^x_1(E_x)
=(9\sqrt{3}+8\pi)^\frac{1}{3}\Big(\frac{3}{2}\Big)^\frac{2}{3}v^\frac{2}{3},
\end{equation*}
while from \eqref{eq:th_expr_P_alpha_y} and Example~\ref{ex:grushin_y} we get 
\[
\mathcal P_1(E_y)=\mathcal P^y_1(E_y)=3\Big(\frac{3}{2}\Big)^\frac{2}{3}v^\frac{2}{3}.
\]
Thus $\P_1(E_y)<\P_1(E_x)$. Motivated by this comparison and by the fact that~$E_y$ is obtained by translating and dilating the Grushin isoperimetric set, we conjecture that~$E_y$ may be the solution of the double bubble problem for equal areas in the Grushin plane. 
\end{remark}


\begin{bibdiv}
\begin{biblist}

\bib{A76}{article}{
   author={Almgren, F.~J., Jr.},
   title={Existence and regularity almost everywhere of solutions to elliptic variational problems with constraints},
   journal={Mem. Amer. Math. Soc.},
   volume={4},
   date={1976},
   number={165},
}

\bib{AFP00}{book}{
   author={Ambrosio, Luigi},
   author={Fusco, Nicola},
   author={Pallara, Diego},
   title={Functions of bounded variation and free discontinuity problems},
   series={Oxford Mathematical Monographs},
   publisher={The Clarendon Press, Oxford University Press, New York},
   date={2000},
}


\bib{CCDPM14}{article}{
   author={Cagnetti, Filippo},
   author={Colombo, Maria},
   author={De Philippis, Guido},
   author={Maggi, Francesco},
   title={Rigidity of equality cases in Steiner's perimeter inequality},
   journal={Anal. PDE},
   volume={7},
   date={2014},
   number={7},
   pages={1535--1593},
}

\bib{CCF05}{article}{
   author={Chleb\'\i k, Miroslav},
   author={Cianchi, Andrea},
   author={Fusco, Nicola},
   title={The perimeter inequality under Steiner symmetrization: cases of
   equality},
   journal={Ann. of Math. (2)},
   volume={162},
   date={2005},
   number={1},
   pages={525--555},
}

\bib{C-et-al08}{article}{
   author={Corneli, J.},
   author={Corwin, I.},
   author={Hurder, S.},
   author={Sesum, V.},
   author={Xu, Y.},
   author={Adams, E.},
   author={Davis, D.},
   author={Lee, M.},
   author={Visocchi, R.},
   author={Hoffman, N.},
   title={Double bubbles in Gauss space and spheres},
   journal={Houston J. Math.},
   volume={34},
   date={2008},
   number={1},
   pages={181--204},
}

\bib{C-et-al04}{article}{
   author={Corneli, Joseph},
   author={Holt, Paul},
   author={Lee, George},
   author={Leger, Nicholas},
   author={Schoenfeld, Eric},
   author={Steinhurst, Benjamin},
   title={The double bubble problem on the flat two-torus},
   journal={Trans. Amer. Math. Soc.},
   volume={356},
   date={2004},
   number={9},
   pages={3769--3820},
}

\bib{CF02}{article}{
   author={Cotton, Andrew},
   author={Freeman, David},
   title={The double bubble problem in spherical space and hyperbolic space},
   journal={Int. J. Math. Math. Sci.},
   volume={32},
   date={2002},
   number={11},
   pages={641--699},
}

\bib{DLSW09}{article}{
   author={Dorff, Rebecca},
   author={Lawlor, Gary},
   author={Sampson, Donald},
   author={Wilson, Brandon},
   title={Proof of the planar double bubble conjecture using metacalibration methods},
   journal={Involve},
   volume={2},
   date={2009},
   number={5},
   pages={611--628},
}

\bib{EG15}{book}{
   author={Evans, Lawrence C.},
   author={Gariepy, Ronald F.},
   title={Measure theory and fine properties of functions},
   series={Textbooks in Mathematics},
   edition={Revised edition},
   publisher={CRC Press, Boca Raton, FL},
   date={2015},
}

\bib{FABHZ93}{article}{
   author={Foisy, Joel},
   author={Alfaro, Manuel},
   author={Brock, Jeffrey},
   author={Hodges, Nickelous},
   author={Zimba, Jason},
   title={The standard double soap bubble in ${\bf R}^2$ uniquely minimizes perimeter},
   journal={Pacific J. Math.},
   volume={159},
   date={1993},
   number={1},
   pages={47--59},
}

\bib{F17}{article}{
   author={Franceschi, Valentina},
   title={A minimal partition problem with trace constraint in the Grushin plane},
   journal={Calc. Var. Partial Differential Equations},
   volume={56},
   date={2017},
   number={4},
   pages={56:104},
}

\bib{FM16}{article}{
   author={Franceschi, Valentina},
   author={Monti, Roberto},
   title={Isoperimetric problem in $H$-type groups and Grushin spaces},
   journal={Rev. Mat. Iberoam.},
   volume={32},
   date={2016},
   number={4},
   pages={1227--1258},
}


\bib{HS00}{article}{
   author={Hass, Joel},
   author={Schlafly, Roger},
   title={Double bubbles minimize},
   journal={Ann. of Math. (2)},
   volume={151},
   date={2000},
   number={2},
   pages={459--515},
}


\bib{HMRR02}{article}{
   author={Hutchings, Michael},
   author={Morgan, Frank},
   author={Ritor\'e, Manuel},
   author={Ros, Antonio},
   title={Proof of the double bubble conjecture},
   journal={Ann. of Math. (2)},
   volume={155},
   date={2002},
   number={2},
   pages={459--489},
}


\bib{LB06}{article}{
   author={Lopez, Robert},
   author={Baker, Tracy Borawski},
   title={The double bubble problem on the cone},
   journal={New York J. Math.},
   volume={12},
   date={2006},
   pages={157--167},
}

\bib{M12}{book}{
   author={Maggi, Francesco},
   title={Sets of finite perimeter and geometric variational problems},
   series={Cambridge Studies in Advanced Mathematics},
   volume={135},
   publisher={Cambridge University Press, Cambridge},
   date={2012},
   pages={xx+454},
}

\bib{M96}{article}{
   author={Masters, Joseph D.},
   title={The perimeter-minimizing enclosure of two areas in $S^2$},
   journal={Real Anal. Exchange},
   volume={22},
   date={1996/97},
   number={2},
   pages={645--654},
}

\bib{M15}{article}{
   author={Monti, Roberto},
   title={Minimal surfaces and harmonic functions in the Heisenberg group},
   journal={Nonlinear Anal.},
   volume={126},
   date={2015},
   pages={378--393},
}

\bib{MM04}{article}{
   author={Monti, Roberto},
   author={Morbidelli, Daniele},
   title={Isoperimetric inequality in the Grushin plane},
   journal={J. Geom. Anal.},
   volume={14},
   date={2004},
   number={2},
   pages={355--368},
}

\bib{MS17}{article}{
   author={Monti, Roberto},
   author={Stefani, Giorgio},
   title={Improved Lipschitz approximation of $H$-perimeter minimizing
   boundaries},
   language={English, with English and French summaries},
   journal={J. Math. Pures Appl. (9)},
   volume={108},
   date={2017},
   number={3},
   pages={372--398},
}

\bib{MV15}{article}{
   author={Monti, Roberto},
   author={Vittone, Davide},
   title={Height estimate and slicing formulas in the Heisenberg group},
   journal={Anal. PDE},
   volume={8},
   date={2015},
   number={6},
   pages={1421--1454},
}

\bib{M94}{article}{
   author={Morgan, Frank},
   title={Soap bubbles in ${\bf R}^2$ and in surfaces},
   journal={Pacific J. Math.},
   volume={165},
   date={1994},
   number={2},
   pages={347--361},
}

\bib{M16}{book}{
   author={Morgan, Frank},
   title={Geometric measure theory},
   edition={5},
   publisher={Elsevier/Academic Press, Amsterdam},
   date={2016},
}

\bib{P82}{article}{
   author={Pansu, Pierre},
   title={Une in\'egalit\'e isop\'erim\'etrique sur le groupe de Heisenberg},
   journal={C. R. Acad. Sci. Paris S\'er. I Math.},
   volume={295},
   date={1982},
   number={2},
   pages={127--130},
}

\bib{P1873}{book}{
	author={Plateau, Joseph Antoine Ferdinand},
	title={Statique Exp\'erimentale et Th\'eorique des Liquides Soumis aux Seules Forces Mol\'eculaires}, 
	publisher={Gauthier-Villars, Paris}, 
	date={1873}
}

\bib{R08}{article}{
   author={Reichardt, Ben W.},
   title={Proof of the double bubble conjecture in $\mathbf{R}^n$},
   journal={J. Geom. Anal.},
   volume={18},
   date={2008},
   number={1},
   pages={172--191},
}

\bib{R05}{article}{
   author={Ros, Antonio},
   title={The isoperimetric problem},
   conference={
      title={Global theory of minimal surfaces},
   },
   book={
      series={Clay Math. Proc.},
      volume={2},
      publisher={Amer. Math. Soc., Providence, RI},
   },
   date={2005},
   pages={175--209},
}

\bib{T76}{article}{
   author={Taylor, Jean E.},
   title={The structure of singularities in area-related variational
   problems with constraints},
   journal={Bull. Amer. Math. Soc.},
   volume={81},
   date={1975},
   number={6},
   pages={1093--1095},
}
		
\bib{W85}{article}{
   author={White, Brian},
   title={Regularity of the singular sets in immiscible fluid interfaces and
   solutions to other Plateau-type problems},
   conference={
      title={Miniconference on geometry and partial differential equations},
      address={Canberra},
      date={1985},
   },
   book={
      series={Proc. Centre Math. Anal. Austral. Nat. Univ.},
      volume={10},
      publisher={Austral. Nat. Univ., Canberra},
   },
   date={1986},
   pages={244--249},
}

\bib{W04}{article}{
   author={Wichiramala, Wacharin},
   title={Proof of the planar triple bubble conjecture},
   journal={J. Reine Angew. Math.},
   volume={567},
   date={2004},
   pages={1--49},
}

\end{biblist}
\end{bibdiv}

\end{document}